\let\expandafter\oldproof\csname\string\proof\endcsname
\let\oldendproof\endproof
\renewenvironment{proof}[1][\proofname]{%
  \oldproof[\ttfamily \scshape \bf #1. ]%
}{\oldendproof}
\def\O{{\cal O}}
\def\C{{\cal C}}
\def\B{\mathbb{B}}
\def\R{{\rm I\!R}}
\def\oR{\overline{\R}}
\def\N{{\rm I\!N}}
\def\ox{\bar{x}}
\def\oy{\bar{y}}
\def\oz{\bar{z}}
\def\ov{\bar{v}}
\def\ou{\bar{u}}
\def\X{{\bf X}}
\def\Y{{\bf Y}}
\def\Z{{\bf Z}}
\def\S{\bf {S}}
\def \b{{\}_{k\in\N}}}
\def\xk{x^k}
\def\zk{z^k}
\def\tk{{t_k}}
\newcommand\nwidehat[1]{%
\savestack{\tmpbox}{\stretchto{%
  \scaleto{%
    \scalerel*[\widthof{\ensuremath{#1}}]{\kern-.6pt\bigwedge\kern-.6pt}%
    {\rule[-\textheight/2]{1ex}{\textheight}}%WIDTH-LIMITED BIG WEDGE
  }{\textheight}% 
}{0.5ex}}%
\stackon[1pt]{#1}{\tmpbox}%
}
\newcommand\nwidecheck[1]{%
\savestack{\tmpbox}{\stretchto{%
  \scaleto{%
    \scalerel*[\widthof{\ensuremath{#1}}]{\kern-.6pt\bigwedge\kern-.6pt}%
    {\rule[-\textheight/2]{1ex}{\textheight}}%WIDTH-LIMITED BIG WEDGE
  }{\textheight}% 
}{0.5ex}}%
\stackon[1pt]{#1}{\scalebox{-1}{\tmpbox}}%
}
\def\what{\nwidehat}
\def\wch{\nwidecheck}
\def\emp{\emptyset}
\def\tto{\rightrightarrows}
\def\prox{{{\rm prox}\,}}
\def\tto{\rightrightarrows}
\def\sub{\partial}
\def\Hat{\widehat}
\def\ra{\rangle}
\def\la{\langle}
\def\ve{\varepsilon}
\def\omu{\bar{\mu}}
\def\lm{\lambda}
 \def\para{{\rm par}\,}
\def\dd{\delta}
\def\al{\alpha}
\def\Th{\Theta}
\def\th{\theta}
\def\vt{\vartheta}
\def\ph{\varphi}
\def\toset_#1{\xrightarrow{#1}}
\DeclareMathOperator*{\mini}{minimize\;}
\DeclareMathOperator*{\argmin}{argmin}
\DeclareMathOperator*{\argmax}{argmax}
\def\d{{\rm d}}
\def\dist{{\rm dist}}
\def\rge{{\rm rge\,}}
\def\ri{{\rm ri}\,}
\def\tr{{\rm tr}\,}
\def\inte{{\rm int}\,}
\def\gph{{\rm gph}\,}
\def\epi{{\rm epi}\,}
\def\dim{{\rm dim}\,}
\def\dom{{\rm dom}\,}
\def\ker{{\rm ker}\,}
\def\aff{{\rm aff}\,}
\def\cl{{\rm cl}\,}
\def\rank{{\rm rank}\,}
\def\sm{\hbox{${1\over 2}$}}
\def\K{{\overline{  K}}}
\def\rN{{\what{N}}}
\def\rt{{\what{T}}}
\def\verl{ \;\rule[-0.4mm]{0.2mm}{0.27cm}\;}
\begin{document}
%\vspace*{0.5in}
\begin{center}
{\Large \bf Smoothness of Subgradient Mappings and  Its  Applications in Parametric Optimization}\\[2ex]
NGUYEN T. V. HANG\footnote{School of Physical and Mathematical Sciences, Nanyang Technological University, Singapore 639798 and Institute of Mathematics, Vietnam Academy of Science and Technology, Hanoi, Vietnam (thivanhang.nguyen@ntu.edu.sg or ntvhang@math.ac.vn). Research of this author is partially supported by Singapore National Academy of Science via SASEAF Programme under the grant RIE2025 NRF International Partnership Funding Initiative.} and   EBRAHIM SARABI\footnote{Corresponding author, Department of Mathematics, Miami University, Oxford, OH 45065, USA (sarabim@miamioh.edu). Research of this    author is partially supported by the U.S. National Science Foundation  under the grant DMS 2108546.}
\end{center}
\vspace*{0.05in}

\small{\bf Abstract.} We demonstrate that the concept of strict proto-differentiability of subgradient mappings can play a similar role  as smoothness of the gradient mapping of a  function in the study of subgradient mappings of prox-regular functions.
We then show that metric regularity and strong metric regularity are equivalent for a class of generalized equations when this condition is satisfied.  For a class of composite functions, called ${\cal C}^2$-decomposable, 
we argue that strict proto-differentiability can be characterized via a simple relative interior condition. Leveraging this observation, we present a characterization of the continuous differentiability of the proximal 
mapping for this class of function via a certain relative interior condition. Applications to the study of strong metric regularity of the KKT system of a class of composite optimization problems are also provided. 
\\[1ex]
{\bf Keywords.} strictly proto-differentiable mappings, strong metric regularity, proximal mappings, ${\cal C}^2$-decomposable functions. \\[1ex]
{\bf Mathematics Subject Classification (2000)} 90C31, 65K99, 49J52, 49J53

\newtheorem{Theorem}{Theorem}[section]
\newtheorem{Proposition}[Theorem]{Proposition}

\newtheorem{Lemma}[Theorem]{Lemma}
\newtheorem{Corollary}[Theorem]{Corollary}
\numberwithin{equation}{section}

\theoremstyle{definition}
\newtheorem{Definition}[Theorem]{Definition}
\newtheorem{Example}[Theorem]{Example}
\newtheorem{Remark}[Theorem]{Remark}

\renewcommand{\thefootnote}{\fnsymbol{footnote}}

\normalsize

\section{Introduction}

Smoothness of the gradient mapping of a function, or twice  differentiability of the function itself, has numerous applications including those in designing efficient numerical algorithms for solving optimization problems such as Newton and Newton-like methods.
That inspires us to ask the following question: Is there a counterpart of the latter concept for subgradient mapping of nonsmooth functions?
If so, what should we expect from such a property when  dealing with different second-order variational constructions? 
Borrowing the concept of   {\em strict proto-differentiability}  from  \cite{pr}, we denominate that 
this concept plays a similar role for subgradient mappings as the smoothness of  the gradient mapping. 
Given a finite dimensional Hilbert space $\X$ and  a proper function $f:\X\to \oR:=[-\infty,\infty]$,  recall from  \cite{pr} that the subgradient mapping $\sub f$ is 
said to be strictly proto-differentiable at $\ox$ for $\ov$ with $(\ox,\ov)\in \gph \sub f$ if the regular (Clarke) tangent cone to $\gph \sub f$ at $(\ox,\ov)$, denoted by $\rt_{\gph \sub f}(\ox,\ov)$, and the paratingent cone to $\gph \sub f$ at $(\ox,\ov)$,
denoted by $\widetilde{T}_{\gph \sub f}(\ox,\ov)$, coincide; see \eqref{tan1} for the definitions of the these cones. At first glance, it may not be clear why this concept can be considered as a proper candidate in this regard but 
our developments in this paper will confirm  the unique potential  of this concept  to answer our questions. For instance, it is well-known that the Hessian matrix of a twice continuously differentiable function is symmetric. In Section~\ref{sec3},
we are going to demonstrate that for a broad class of functions, called prox-regular, the graphical derivative and coderivative (generalized Hessian) of  subgradient mappings coincide provided that 
strict proto-differentiability of subgradient mappings is satisfied; see Theorem~\ref{thm:gdcod}. This can be viewed as a far-reaching extension of the aforementioned property of the Hessian matrix of  twice continuously differentiable functions
and also as a motivation to investigate further strict proto-differentiability  for important classes of functions.

Poliquin and Rockafellar obtained interesting characterizations of strict proto-differentiability of subgradient mappings of prox-regular functions in \cite{pr2}. In particular, it was shown there  that 
this concept amounts to the continuous differentiability of proximal mappings under certain assumptions. The latter property of proximal mappings was  studied  for convex sets in Hilbert spaces by Holmes in \cite{hol} in the 1970s
using the implicit function theorem and imposing a certain assumption on the boundary of the convex set under consideration. The characterization in \cite{pr2} indicates that 
concepts of  second-order variational analysis are perhaps  more appropriate to investigate   the continuous differentiability 
of proximal mappings. Indeed, we recently provided in  \cite{HaS23} a simple characterization of the continuous differentiability 
of the proximal mapping of a certain composite function via a relative interior condition by leveraging strict proto-differentiability of their subgradient mappings.

While strict proto-differentiability was introduced in \cite{pr}, it was not studied systematically 
for different classes of functions commonly seen in constrained and composite optimization problems. Recently, the authors characterized it for polyhedral functions, 
  those that their epigraphs are polyhedral convex sets, in \cite{HJS22} and for a class of   composite functions in \cite{HaS22}, obtained from the composition of a polyhedral function and a ${\cal C}^2$-smooth function. 
In these works, it was shown that strict proto-differentiability of subgradient mappings under consideration amounts to   the subgradient 
 taken from the relative interior of the subdifferential set. Such a characterization had two major consequences. First, it allowed the authors to justify the equivalence of metric regularity and strong metric regularity 
for a class of generalized equations at their nondegenerate solutions (see the paragraph after \eqref{deeq} for its definition). And second, we were able to characterize the 
continuous differentiability of the proximal mapping for certain composite functions. 

This paper aims to lay the foundation for the  systematic study of  strict proto-differentiability of  subgradient mappings of prox-regular functions. In doing so, we
utilize a geometric approach by studying first strict smoothness of closed sets, which is different from the path taken in \cite{pr2}. Then, 
we provide a simple characterization of this concept for an important class of functions, called {\em ${\cal C}^2$-decomposable}. This class of functions, as shown by Shapiro in \cite{sh03}, 
encompasses important classes of functions including polyhedral functions and various eigenvalue/singular value functions that often appear in applications. Moreover, we 
present various consequences of strict proto-differentiability in stability properties of generalized equations and use those to characterize the continuous differentiability 
of the proximal mapping. 
  
 The outline of the paper is as follows. We begin in Section~\ref{spdiff} by recalling our notation and characterizing the strict smoothness of sets and   using them to  present 
 characterizations of strict proto-differentiability of  subgradient mappings of prox-regular functions. Section~\ref{sec3} presents some important consequences of 
 strict proto-differentiability for various second-order variational constructions. In Section~\ref{sec04}, we study the relationship between metric regualrity and strong 
 metric regularity of a class of generalized equations under strict proto-differentiability and show that they are, indeed, equivalent. Leveraging then this equivalence, we
 characterize the continuous differentiability of the proximal mapping for prox-regular functions. Section~\ref{chain} is devoted to establishing a chain rule for 
 strict proto-differentiability of ${\cal C}^2$-decomposable functions and deriving a simple characterization of this concept via a relative interior condition. Using this, we study strong metric regularity of the KKT system of a class of composite optimization problems.

\section{Strict Proto-Differentiability of Subgradient Mappings}\label{spdiff}
In what follows,  suppose that $\X$, $\Y$, and $\Z$ are finite dimensional Hilbert spaces.
We denote by $\B$ the closed unit ball in the space in question and by $\B_r(x):=x+r\B$ the closed ball centered at $x$ with radius $r>0$. 
 In the  product space $\X\times \Y$, we use the norm $\|(w,u)\|=\sqrt{\|w\|^2+\|u\|^2}$ for any $(w,u)\in \X\times \Y$.
 Given a nonempty set $C\subset\X$, the symbols $\inte C$,  $\ri C$, $C^*$,  and $\para C$ signify its interior, relative interior, polar cone, and the   linear subspace parallel to the affine hull of $C$, respectively. 
 For any set $C$ in $\X$, its indicator function is defined by $\dd_C(x)=0$ for $x\in C$ and $\dd_C(x)=\infty$ otherwise. We denote
 by $P_C$ the projection mapping onto $C$ and  by $\dist(x,C)$  the distance between $x\in \X$ and a set $C$.
 For a vector $w\in \X$, the subspace $\{tw |\, t\in \R\}$ is denoted by $[w]$. 
 The domain, range, and graph  of a set-valued mapping $F:\X\tto\Y$ are defined, respectively, by  
$ \dom F:= \{x\in\X\big|\;F(x)\ne\emp \}$, $\rge F=\{u\in \Y|\; \exists \,  w\in \X\;\,\mbox{with}\;\; u\in  F(x) \}$, and $\gph F=\{(x,y)\in \X\times \Y|\, y\in F(x)\}$.    
Let $\{C^t\}_{t>0}$ be a parameterized family of sets in $\X$. Its inner and outer limit sets are defined, respectively,  by 
\begin{align*}
\liminf_{t\searrow 0} C^t&= \big\{x\in \X|\; \forall \; t_k \searrow 0 \;\exists \; x^{t_k}\to x \;\;\mbox{with}\;\; x^{t_k}\in C^{t_k}\; \; \mbox{for}\; \;  k\;\; \mbox{sufficiently large}\big\},\\
\limsup_{t\searrow 0} C^t&= \big\{x\in \X|\; \exists \; t_k \searrow 0 \;\exists\;   \; x^{t_k}\to x \;\;\mbox{with}\;\; x^{t_k}\in C^{t_k}\big\};
\end{align*}
see  \cite[Definition~4.1]{rw}. The limit set of $\{C^t\}_{t>0}$ exists if  $\liminf_{t\searrow 0} C^t=\limsup_{t\searrow 0} C^t =:C$,  written as $C^t \to C$ when $t\searrow 0$. 
A sequence $\{f^k\b$ of functions $f^k:\X\to \oR$ is said to {\em epi-converge} to a function $f:\X\to \oR$ if we have $\epi f^k\to \epi f$ as $k\to \infty$, where $\epi f=\{(x,\al)\in \X\times \R|\, f(x)\le \al\}$ is  the epigraph of $f$;
 see \cite[Definition~7.1]{rw} for more details on  epi-convergence. We denote by $f^k\xrightarrow{e} f$ the  epi-convergence of  $\{f^k\b$ to $f$.

Given a nonempty set $\Omega\subset\X$ with $\ox\in \Omega$, the tangent and derivable (adjacent) cones  to $\Omega$ at $\ox$ are defined, respectively,   by
\begin{equation*} 
T_\Omega(\ox) = \limsup_{t\searrow 0} \frac{\Omega - \ox}{t} \quad \mbox{and}\quad \wch{T}_\Omega(\ox) = \liminf_{t\searrow 0} \frac{\Omega - \ox}{t}. 
\end{equation*}
Consider a set-valued mapping $F:\X\tto \Y$. According to \cite[Definition~8.33]{rw}, the {\em graphical derivative} of $F$ at $\ox$ for $\oy$ with $(\ox,\oy) \in \gph F$ is the set-valued mapping $DF(\ox, \oy): \X\tto \Y$ defined via the tangent cone to $\gph F$ at $(\ox, \oy)$ by
\begin{equation*}
\eta \in DF(\ox, \oy)(w)\; \Longleftrightarrow\; (w,\eta)\in T_{\gph F}(\ox, \oy),
\end{equation*}
or, equivalently, $\gph DF(\ox, \oy) = T_{\gph F}(\ox, \oy)$.  
When $F(\ox)$ is a singleton consisting of $\oy$ only, the notation $DF(\ox, \oy)$ is simplified to $DF(\ox)$. It is easy to see that 
for a single-valued mapping $F$ which is differentiable at $\ox$,  the graphical derivative  $DF(\ox)$ boils down to the Jacobian of $F$ at $\ox$, denoted by $\nabla F(\ox)$. 
The set-valued mapping $F$ is said to be {\em proto-differentiable} at $\ox$ for $\oy$ if $\wch T_{\gph F}(\ox, \oy)=T_{\gph F}(\ox, \oy)$.   
Note that the inclusion `$\subset$' in the later equality always holds, and so proto-differentiability requires that opposite inclusion   be satisfied. 
Proto-differentiability was introduced in \cite{r89} and has been studied extensively for subgradient mappings of different classes of functions in \cite{mms,ms20,pr, rw}. 

A more restrictive version of proto-differentiability was introduced in \cite{pr} and  is the main subject of our study in this paper. To present its definition, recall that 
the regular (Clarke) tangent cone and the paratingent cone to $\Omega$ at $\ox$   are defined, respectively,   by
\begin{equation}\label{tan1}
 \rt_{\Omega}(\ox) =\liminf_{x \xrightarrow{\Omega}\ox,t\searrow 0} \frac{\Omega-x}{t}\quad \mbox{and}\quad  \widetilde{T}_{\Omega}(\ox) =\limsup_{x \xrightarrow{\Omega}\ox,t\searrow 0} \frac{\Omega-x}{t},
\end{equation}
where the symbol $x \xrightarrow{\Omega}\ox$ means that $x\to \ox$ with $x\in \Omega$. 
It can be immediately  observed that the inclusions 
\begin{equation}\label{inct}
\rt_{\Omega}(\ox) \subset  \wch{T}_\Omega(\ox)\subset   {T}_\Omega(\ox)\subset \widetilde{T}_{\Omega}(\ox)
\end{equation}
always hold.  
Below, we record two important properties of the paratingent cone, used later in our developments. 
\begin{Proposition} \label{parat} Assume that $\Omega$ is a subset of $\X$ and $\ox\in \Omega$. Then the following properties hold for the paratingent cone to $\Omega$ at $\ox$.
\begin{itemize}[noitemsep,topsep=2pt]
\item [ \rm {(a)}]  $ \widetilde{T}_{\Omega}(\ox)=- \widetilde{T}_{\Omega}(\ox)$.
\item [ \rm {(b)}] $ \limsup_{x\xrightarrow{\Omega}\ox} T_\Omega(x)\subset \widetilde{T}_{\Omega}(\ox)$.
\end{itemize}
\end{Proposition}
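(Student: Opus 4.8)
The plan is to work directly from the "limsup" characterization of the paratingent cone in \eqref{tan1}, unwinding the definition of the outer limit set $\limsup_{t\searrow 0}$ stated earlier in the excerpt, and to produce in each case explicit sequences $x_k \xrightarrow{\Omega} \ox$, $t_k \searrow 0$ witnessing membership.

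For part (a), I would take an arbitrary $w \in \widetilde{T}_\Omega(\ox)$ and show $-w \in \widetilde{T}_\Omega(\ox)$; by symmetry this gives the claimed equality. By definition there are sequences $t_k \searrow 0$ and $x_k \xrightarrow{\Omega} \ox$ and points $w_k \to w$ with $x_k + t_k w_k \in \Omega$ (equivalently $w_k \in (\Omega - x_k)/t_k$). The key observation is to relabel: set $y_k := x_k + t_k w_k \in \Omega$ and $s_k := t_k$. Then $y_k \to \ox$ (since $x_k \to \ox$ and $t_k w_k \to 0$), so $y_k \xrightarrow{\Omega} \ox$, and $s_k \searrow 0$, and
\[
\frac{x_k - y_k}{s_k} = \frac{x_k - (x_k + t_k w_k)}{t_k} = -w_k \to -w,
\]
with $x_k \in \Omega$. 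This exhibits $-w$ as an element of $\limsup_{x \xrightarrow{\Omega}\ox,\, t\searrow 0}(\Omega - x)/t = \widetilde{T}_\Omega(\ox)$. The only mild point of care is that one must be allowed to let $x$ range over $\Omega$ while the "base point" of the difference quotient is the nearby $\Omega$-point — but that is exactly what the definition permits, so there is no real obstacle here.

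For part (b), take $w \in \limsup_{x \xrightarrow{\Omega} \ox} T_\Omega(x)$. By definition of the outer limit of the set-valued map $x \mapsto T_\Omega(x)$, there exist $x_k \xrightarrow{\Omega} \ox$ and $w_k \to w$ with $w_k \in T_\Omega(x_k)$ for each $k$. Now I unwind $w_k \in T_\Omega(x_k) = \limsup_{t\searrow 0}(\Omega - x_k)/t$: for each fixed $k$ there is a sequence $t^k_j \searrow 0$ (as $j \to \infty$) and points $v^k_j \to w_k$ (as $j\to\infty$) with $x_k + t^k_j v^k_j \in \Omega$. A diagonal extraction then finishes the argument: for each $k$ pick $j(k)$ large enough that $t^k_{j(k)} < 1/k$, that $\|v^k_{j(k)} - w_k\| < 1/k$, and — using $x_k \to \ox$ — that $\|x_k - \ox\| < 1/k$ already holds; set $\tau_k := t^k_{j(k)}$ and $\tilde w_k := v^k_{j(k)}$. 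Then $\tau_k \searrow 0$, $x_k \xrightarrow{\Omega} \ox$, $\tilde w_k \to w$ (since $\|\tilde w_k - w\| \le \|\tilde w_k - w_k\| + \|w_k - w\| \to 0$), and $x_k + \tau_k \tilde w_k \in \Omega$, so $w \in \widetilde{T}_\Omega(\ox)$.

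The main thing to be careful about is the diagonalization in part (b): one is extracting from a doubly-indexed family, and it must be arranged that \emph{all three} quantities $\|x_k - \ox\|$, $\tau_k$, and $\|\tilde w_k - w_k\|$ tend to $0$ simultaneously along the single index $k$. This is routine once the two defining "limsup" clauses are written out explicitly, and no compactness beyond the given convergences $w_k \to w$ is needed. Part (a) is essentially a one-line reindexing, so I expect part (b)'s bookkeeping to be the only place where care is required.
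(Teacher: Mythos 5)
Your proof is correct and follows essentially the same route as the paper: part (a) is the identical reindexing argument (viewing $x_k$ as $\tilde x_k + t_k(-w_k)$ with $\tilde x_k := x_k + t_k w_k \in \Omega$), and part (b) is exactly the ``standard diagonalization argument'' that the paper delegates to a citation of Aubin--Frankowska, which you simply write out in full with the correct simultaneous control of $\|x_k-\ox\|$, $\tau_k$, and $\|\tilde w_k - w_k\|$.
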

\begin{proof} While the property in (a) is well-known, we provide a short proof.
Take $w\in  \widetilde{T}_{\Omega}(\ox)$. By definition, we find $t_k\searrow 0$, $x^k\xrightarrow{\Omega}\ox$, and $w^k\to w$ such that $\tilde x^k:=x^k+t_kw^k\in \Omega$ for all $k$ sufficiently large.
Thus, we get $\tilde x^k+t_k(-w^k)=x^k\in \Omega$. Since $-w^k\to -w$, we arrive at $-w\in  \widetilde{T}_{\Omega}(\ox)$, which proves (a). The property in (b) results from a standard diagonalization argument and 
can be gleaned from \cite[Proposition~4.5.6]{af}. 
\end{proof}

We proceed with recalling the definitions of  smoothness of a set and a Lipschitzian manifold, appeared first in  \cite[pp.~169--173]{r85}.
 
\begin{Definition} \label{sss} Assume that $\Omega$ is a subset of $\X$ and $\ox\in \Omega$.  
\begin{itemize}[noitemsep,topsep=2pt]
\item [ \rm {(a)}] The set $\Omega$ is called {  smooth} at $\ox$ if $\wch T_{\Omega}(\ox) ={T}_{\Omega}(\ox)$ and $T_{\Omega}(\ox)$ is a linear subspace of $\X$.
\item [ \rm {(b)}] The set $\Omega$ is called { strictly smooth} at $\ox$ if $\rt_{\Omega}(\ox) =\widetilde{T}_{\Omega}(\ox)$. 
\item [ \rm {(c)}] The set $\Omega$ is called a Lipschitzian manifold around $\ox$ if there are an open neighborhood $O$ of $\ox$, a splitting $\X=\Y\times \Z$ with $\Y$ and $\Z$ being two finite dimensional Hilbert spaces,
and  a ${\cal C}^1$-diffeomorphism $\Phi$ from $O$ onto an open  subset $U:=U'\times U''$ of  $\Y\times \Z$ such that $\Phi(\Omega\cap O)=(\gph f)\cap U$, where 
$f:U'\to U''$ is a Lipschitz continuous function. 
\end{itemize}
\end{Definition}

We are now going to present characterizations of strict smoothness of sets that are Lipschitzian manifolds. 
To do so, recall that the  regular normal cone to $\Omega\subset \X$ at $\ox\in \Omega$,  denoted by $\rN_\Omega(\ox)$,     is defined by
 $\rN_\Omega(\ox) = T_\Omega(\ox)^*$. For $\ox\notin \Omega$, we set $\rN_\Omega(\ox) = \emptyset$. The (limiting/Mordukhovich) normal cone $N_\Omega(\ox)$ to $\Omega$ at $\ox$ is
 the set of all vectors $\ov\in \X$ for which there exist sequences  $\{x^k\b\subset \Omega$ and  $\{v^k\b$ with $v^k\in \rN_\Omega( \xk)$ such that 
$(x^k,v^k)\to (\ox,\ov)$. When $\Omega$ is convex, both normal cones boil down to that of convex analysis.  
Recall also that for a  set-valued mapping $F:\X\tto \Y$ with $(\ox,\oy)\in \gph F$,  the coderivative mapping of   $F$ at $\ox$ for $\oy$, denoted  $D^*F (\ox,\oy )$,   is defined by 
\begin{equation}\label{coder}
\eta\in D^*F (\ox,\oy )(w)\iff (\eta,-w)\in N_{\gph F}(\ox, \oy).
\end{equation}
\begin{Theorem} [characterizations of strictly smooth sets] \label{strict}Assume that $\Omega$ is a subset of $\X$, $\ox\in \Omega$, and that $\Omega$ is locally closed around $\ox$. Consider the following properties:
\begin{itemize}[noitemsep,topsep=2pt]
\item [ \rm {(a)}]   $\Omega$ is   strictly smooth at $\ox$;
\item [ \rm {(b)}] $ \lim_{x\xrightarrow{\Omega}\ox} T_\Omega(x)=  {T}_{\Omega}(\ox)$.
\end{itemize}
Then, the implication {\rm(}a{\rm)}$\implies${\rm(}b{\rm)} always holds. If, in addition, $\Omega$ is a Lipschitzian manifold around $\ox$, then the opposite implication holds as well. In this case,  {\rm(}a{\rm)} and {\rm(}b{\rm)} are also equivalent
to the following properties:
\begin{itemize}[noitemsep,topsep=2pt]
\item [ \rm {(c)}]  $T_\Omega(x)$ converges to   ${T}_{\Omega}(\ox)$  as $x\to \ox$  in the set of  points $x\in \Omega$ for which  $\wch T_{\Omega}(x) ={T}_{\Omega}(x)$.
\item [ \rm {(d)}]  $T_\Omega(x)$ converges to   ${T}_{\Omega}(\ox)$  as $x\to \ox$  in the set of  points $x\in \Omega$ at which $\Omega$ is smooth. 
\item [ \rm {(e)}] $T_\Omega(x)$ converges   as $x\to \ox$  in the set of  points $x\in \Omega$ at which $\Omega$ is smooth. 
\end{itemize}
\end{Theorem}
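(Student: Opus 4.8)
The plan is to establish the universal implication (a)$\implies$(b) from standard tangent-cone calculus, and then, under the Lipschitzian manifold hypothesis, to reduce everything to the graph of a Lipschitz function and extract strict differentiability of that function from (b) (respectively (e)) via Rademacher's theorem. For (a)$\implies$(b), which needs no extra structure, I would combine the inclusion chain \eqref{inct}, the identity $\rt_\Omega(\ox)=\liminf_{x\xrightarrow{\Omega}\ox}T_\Omega(x)$ valid for locally closed sets (\cite[Theorem~6.26]{rw}), and Proposition~\ref{parat}(b). Under (a) these yield
\[
\rt_\Omega(\ox)=\liminf_{x\xrightarrow{\Omega}\ox}T_\Omega(x)\subseteq\limsup_{x\xrightarrow{\Omega}\ox}T_\Omega(x)\subseteq\widetilde{T}_\Omega(\ox)=\rt_\Omega(\ox),
\]
while \eqref{inct} gives $\rt_\Omega(\ox)\subseteq T_\Omega(\ox)\subseteq\widetilde{T}_\Omega(\ox)$; hence all the sets coincide, so $\lim_{x\xrightarrow{\Omega}\ox}T_\Omega(x)=T_\Omega(\ox)$, which is (b). (By Proposition~\ref{parat}(a) this also forces $T_\Omega(\ox)$ to be a subspace, so $\Omega$ is even smooth at $\ox$.)

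Assume now that $\Omega$ is a Lipschitzian manifold around $\ox$. Using the ${\cal C}^1$-diffeomorphism $\Phi$ of Definition~\ref{sss}(c), I would first check that all of (a)--(e) are invariant under $\Phi$: a ${\cal C}^1$ map is strictly differentiable and $\nabla\Phi$ is continuous and everywhere invertible, so the contingent, derivable, Clarke tangent and paratingent cones all transform by the (invertible) Jacobian, and set-convergence of the transformed cones is equivalent to set-convergence of the originals. Thus it suffices to treat $\Omega=\gph f$ with $f$ Lipschitz of constant $\ell$, $\ox=(\oy,\oz)$, $\oz=f(\oy)$. The decisive point is then the following lemma: \emph{if $T_{\gph f}(y,f(y))$ converges as $y\to\oy$ along the set $D$ of points at which $f$ is differentiable, then $f$ is strictly differentiable at $\oy$ and $\gph f$ is strictly smooth at $(\oy,\oz)$.} By Rademacher's theorem $D$ is dense near $\oy$, so this family of base points is nonempty and accumulates at $\ox$. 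For $y\in D$ a short computation with the Lipschitz bound gives $\wch T_{\gph f}(y,f(y))=T_{\gph f}(y,f(y))=\gph\nabla f(y)$ (in particular $\gph f$ is smooth at such points), and since $\|\nabla f(y)\|\le\ell$, convergence of the subspaces $\gph\nabla f(y)$ is equivalent to convergence of the operators $\nabla f(y)$ to a single $A$; hence $\overline\partial f(\oy)=\{A\}$ and $f$ is strictly differentiable at $\oy$. A further short computation — using upper semicontinuity of $\overline\partial f$ and Lebourg's mean value theorem, or strict differentiability directly — gives $\rt_{\gph f}(\oy,\oz)=\widetilde{T}_{\gph f}(\oy,\oz)=\gph A$, i.e. strict smoothness. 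This lemma, and precisely the passage from contingent-cone information at nearby points to paratingent-cone information at $\ox$, is the main obstacle: for a general locally closed set $\widetilde{T}_\Omega(\ox)$ may be strictly larger than $\limsup_{x\xrightarrow{\Omega}\ox}T_\Omega(x)$, and it is exactly the Lipschitzian manifold structure that rules this out.

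It then remains to assemble the equivalences. Property (b) gives convergence of $T_\Omega(x)$ over all $x\in\Omega$ near $\ox$, hence over $D$, so the lemma yields (a), proving (b)$\implies$(a). Since smooth points are geometrically derivable points and (in graph coordinates) the set $D$ consists of smooth points accumulating at $\ox$, the restricted limits in (c)--(e) are over nonempty index sets, and a limit over a larger index set restricts to any smaller one; this delivers (b)$\implies$(c), (b)$\implies$(d), (c)$\implies$(e) and (d)$\implies$(e) with no extra work. Finally (e)$\implies$(a): (e) provides convergence of $T_\Omega(x)$ over the smooth points, in particular over $D$, so the lemma again gives strict smoothness (and, as a by-product, identifies the limit in (e) with $T_\Omega(\ox)$). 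Together with (a)$\implies$(b) from the first step, this closes the cycle (a)$\Leftrightarrow$(b)$\Leftrightarrow\cdots\Leftrightarrow$(e).
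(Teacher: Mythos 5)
Your proposal is correct and follows essentially the same route as the paper: the sandwich $\rt_\Omega(\ox)=\liminf T_\Omega(x)\subset\limsup T_\Omega(x)\subset\widetilde T_\Omega(\ox)$ for (a)$\implies$(b), the trivial restriction of index sets for (b)$\implies$(c)$\implies$(d)$\implies$(e), and for (e)$\implies$(a) the reduction via $\Phi$ to a Lipschitz graph, where convergence of $\nabla f$ over the differentiability points forces a singleton generalized Jacobian, hence strict differentiability of $f$ and strict smoothness of $\gph f$ by Rockafellar's result. The only difference is cosmetic: you invoke Rademacher's theorem and the Clarke generalized Jacobian where the paper cites \cite[Theorems~5.40 and 9.62, Exercise~9.25(c)]{rw} for the same facts.
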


\begin{proof}  If (a) holds,  it follows from \cite[Theorem~6.26]{rw} that
\begin{equation*}\label{lif}
\liminf_{x \xrightarrow{\Omega}\ox} T_{\Omega}(x)= \widehat T_{\Omega}(\ox).
\end{equation*}
Combining this and  Proposition~\ref{parat}(b) tells us that 
$$
\widehat T_{\Omega}(\ox)=\liminf_{x \xrightarrow{\Omega}\ox} T_{\Omega}(x)\subset  \limsup_{x\xrightarrow{\Omega}\ox} T_\Omega(x)\subset \widetilde{T}_{\Omega}(\ox),
$$ 
which, together with (a), yields (b).  The implications (b)$\implies$(c)$\implies$(d)$\implies$(e) clearly hold.
Assume now that $\Omega$ is a Lipschitzian manifold around $\ox$ and turn to the implication  (e)$\implies$(a).
% and that  $\Omega'\subset\Omega$ is the set of points $x\in \Omega$ at which $\Omega$ is smooth.
% We conclude from  \cite[Theorem~3.5(a)]{r85} that $\Omega'$ differs from $\Omega$ by only a set of measure zero. Furthermore, it follows from 
% \cite[Theorem~3.5(b)]{r85} and the convergence in (d) that $\Omega$ is smooth at $\ox$, which coupled with \cite[Theorem~3.5(c)]{r85} gives us the equivalence of  (d) and (a).
Using the notation in Definition~\ref{sss}(c), we conclude that the the convergence in (e) amounts to the convergence of $\gph \nabla f(u)$ as $u\to \ou$ in the set of points $u$ at which $f$ is differentiable, where $\ou\in U'$ with $(\ou,f(\ou))=\Phi(\ox)$.
% with $(u,f(u))=\Phi(x)$ as $u\to \ou$ with $(\ou,f(\ou))=\Phi(\ox)$ in set of points . 
Since $f$ is Lipschitz continuous around $\ou$, $\nabla f(u)$ is locally uniformly bounded around $\ou$, whenever it exists.
It follows from \cite[Theorem~5.40]{rw} that the aforementioned graphical convergence of $\nabla f(u)$ coincides with its pointwise convergence as $u\to\ou$ in the same set.
%bounded linear mappings is equivalent to its pointwise convergence. 
This, along with \cite[Theorem~9.62]{rw}, tells us that $D^*f(\ou)$ is single-valued. Appealing now to  \cite[Exercise 9.25(c)]{rw}
implies that $f$ is strictly differentiable at $\ou$. By \cite[Proposition~3.1(b)]{r85}, the latter is equivalent to $\gph f$ being strictly smooth at $(\ou,f(\ou))$, which amounts to $\Omega$ being strictly smooth at $\ox$,  since $\Phi$ in Definition~\ref{sss}(c) is a diffeomorphism. 
This completes the proof.
\end{proof}

Recall from \cite[Definition~9.53]{rw} that the strict graphical derivative 
of  a set-valued mapping $F$ at $\ox$ for $\oy$ with $(\ox,\oy)\in \gph F$, is  the set-valued mapping $\widetilde D F(\ox,\oy):\X\tto \Y$, defined  by 
\begin{equation*}
\eta \in \widetilde DF(\ox, \oy)(w)\; \Longleftrightarrow\; (w,\eta)\in \widetilde T_{\gph F}(\ox, \oy).
\end{equation*} 
Following \cite{pr}, we say that the set-valued mapping $F$ is  {\em strictly} proto-differentiable at $\ox$ for $\oy$ provided that   $\gph F$ is strictly smooth at $(\ox,\oy)$, namely  $\rt_{\gph F}(\ox, \oy)=\widetilde T_{\gph F}(\ox, \oy)$. 
Indeed, since  the inclusion `$\subset$'  always holds due to  \eqref{inct}, the strict proto-differentiability of $F$ at $\ox$ for $\oy$ amounts to the validity of  the opposite inclusion therein. 
According to \eqref{inct}, the strict proto-differentiability of $F$ at $\ox$ for $\oy$ implies that $\widetilde T_{\gph F}(\ox, \oy)= T_{\gph F}(\ox, \oy)$, which in turn demonstrates that 
$  DF(\ox, \oy)(w)=\widetilde DF(\ox, \oy)(w)$ for any $w\in \X$. While proto-differentiability has been studied for different classes of functions, its strict version has not received much attention.
In fact, we recently characterized in \cite[Theorem~3.2(c)]{HJS22} this property for subgradient mappings of polyhedral functions via a relative interior condition; see also \cite[Theorem~3.10]{HaS22} for a similar 
result for  certain composite functions. Our main objective is to extend the latter characterization for a rather large class of composite functions.

Given a function $f:\X \to \oR$ and a point $\ox\in\X$ with $f(\ox)$ finite, 
a vector $v\in \X$ is called a subgradient of $f$ at $\ox$ if $(v,-1)\in N_{\epi f}(\ox,f(\ox))$. The set of all subgradients of $f$ at $\ox$
is    denoted by $\sub f(\ox)$.   
Replacing the limiting normal cone with $\rN_{\epi f}(\ox, f(\ox))$ in the definition of $\sub f(\ox)$ gives us $\widehat \sub f(\ox)$, which is called the regular subdifferential of $f$ at $\ox$.
A function  $f\colon\X\to\oR$ is called prox-regular at $\ox$ for $\ov$ if $f$ is finite at $\ox$ and locally lower semicontinuous (lsc)   around $\ox$ with $\ov\in\sub f(\ox)$, and there exist 
constants $\ve>0$ and $r\ge 0$ such that
\begin{equation}\label{prox}
\begin{cases}
f(x')\ge f(x)+\la v,x'-x\ra-\frac{r}{2}\|x'-x\|^2\;\mbox{ for all }\; x'\in\B_{\ve}(\ox)\\
\mbox{whenever }\;(x,v)\in(\gph\sub f)\cap\B_{\ve}(\ox,\ov)\; \mbox{ with }\; f(x)<f(\ox) +\ve.
\end{cases}
\end{equation}
The function $f$ is called subdifferentially continuous at $\ox$ for $\ov$ if the convergence $(x^k,v^k)\to(\ox,\ov)$ with $v^k\in\sub f(x^k)$ yields $f(x^k)\to f(\ox)$ as $k\to\infty$. 
Important examples of prox-regular and subdifferentially continuous functions are convex functions and strongly amenable functions in the sense of \cite[Definition~10.23]{rw}. 
Below, we provide a characterization of strict proto-differentiability of subgradient mappings of prox-regular functions, which is a slight improvement of \cite[Corollary~4.3]{pr2}. 

In what follows, we say that  a sequence of  set-valued mappings $F^k:\X\tto \Y$, $k\in \N$,     {\em graph}-converges to $F:\X\tto\Y$ if the sequence of sets $\{\gph F^k\b$ is convergent to $\gph F$. 

\begin{Corollary}[characterizations of strict proto-differentiability] \label{sted}
Assume that  $f:\X\to \oR$ is prox-regular and subdifferentially continuous at $\ox$ for $\ov\in \sub f(\ox)$.
Then the following properties are equivalent:
\begin{itemize}[noitemsep,topsep=2pt]
\item [ \rm {(a)}] $\sub f$ is strictly proto-differentiable at $\ox$ for $\ov$;
\item [ \rm {(b)}] $D(\sub  f)(x,v)$ graph-converges to $D(\sub  f)(\ox,\ov)$  as $(x,v)\to (\ox,\ov)$  and  $(x,v)\in\gph \sub f$;
\item [ \rm {(c)}] $D(\sub  f)(x,v)$ graph-converges to $D(\sub  f)(\ox,\ov)$  as $(x,v)\to (\ox,\ov)$  in the set of pairs  $(x,v)\in\gph \sub f$ for which $\sub f$ is   proto-differentiable;
\item [ \rm {(d)}] $D(\sub  f)(x,v)$ graph-converges to $D(\sub  f)(\ox,\ov)$  as $(x,v)\to (\ox,\ov)$  in the set of pairs  $(x,v)\in\gph \sub f$ for which $\sub f$ is   proto-differentiable and $T_{\gph \sub f}(x,v)$ is a linear subspace. 
\end{itemize}
\end{Corollary}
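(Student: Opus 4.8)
The plan is to apply Theorem~\ref{strict} to the set $\Omega:=\gph\sub f$ at the point $(\ox,\ov)$, so that the proof reduces to checking the two hypotheses of that theorem and then transcribing its conclusions. The first hypothesis, that $\Omega$ is locally closed around $(\ox,\ov)$, is a standard consequence of prox-regularity together with subdifferential continuity of $f$ at $\ox$ for $\ov$ (see \cite{pr2}). The second hypothesis, that $\gph\sub f$ is a Lipschitzian manifold around $(\ox,\ov)$ in the sense of Definition~\ref{sss}(c), is the substantive ingredient, and it is already available in \cite{pr2}: fixing $\lambda\in(0,1/r)$ with $r$ the constant from \eqref{prox}, the localized resolvent $P:=(I+\lambda\sub f)^{-1}$ is single-valued and Lipschitz continuous near $\ox+\lambda\ov$, and under the linear change of variables $(x,v)\mapsto(x+\lambda v,v)$ --- a $\mathcal C^1$-diffeomorphism of $\X\times\X$ --- the set $\gph\sub f$ is carried locally onto the graph of the Lipschitz map $z\mapsto\lambda^{-1}(z-P(z))$. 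It is precisely here that subdifferential continuity of $f$ enters, so that the resolvent localization is obtained on a genuine neighborhood of $\ox+\lambda\ov$.

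With both hypotheses in force, Theorem~\ref{strict} applies with $\Omega=\gph\sub f$ and base point $(\ox,\ov)$, and it remains only to match the statements. By definition, strict smoothness of $\gph\sub f$ at $(\ox,\ov)$ means exactly that $\sub f$ is strictly proto-differentiable at $\ox$ for $\ov$, so Theorem~\ref{strict}(a) is item~(a). Since $\gph D(\sub f)(x,v)=T_{\gph\sub f}(x,v)$ for every $(x,v)\in\gph\sub f$, the set-convergence $T_{\gph\sub f}(x,v)\to T_{\gph\sub f}(\ox,\ov)$ is, by definition, the graph-convergence of $D(\sub f)(x,v)$ to $D(\sub f)(\ox,\ov)$; hence Theorem~\ref{strict}(b) is item~(b). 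Similarly, $\sub f$ is proto-differentiable at $(x,v)$ iff $\wch T_{\gph\sub f}(x,v)=T_{\gph\sub f}(x,v)$, and $\gph\sub f$ is smooth at $(x,v)$ iff, in addition, $T_{\gph\sub f}(x,v)$ is a linear subspace; thus Theorem~\ref{strict}(c) and (d) are, respectively, items~(c) and~(d). The chain of equivalences (a)$\iff$(b)$\iff$(c)$\iff$(d) from Theorem~\ref{strict} therefore yields the assertion.

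The only genuine obstacle is the Lipschitzian manifold structure of $\gph\sub f$; once it is invoked from \cite{pr2}, everything else is a direct translation. A point to handle with a little care is that the base point in Theorem~\ref{strict} now lives in $\X\times\X$, so its ``ranging over $x\in\Omega$'' conditions become conditions ranging over pairs $(x,v)\in\gph\sub f$ --- which is exactly how items~(c) and~(d) are phrased --- and ``$T_\Omega(x)$ is a linear subspace'' becomes ``$T_{\gph\sub f}(x,v)$ is a linear subspace''.
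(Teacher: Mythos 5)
Your proposal is correct and follows essentially the same route as the paper: invoke the Poliquin--Rockafellar result that $\gph\sub f$ is a Lipschitzian manifold around $(\ox,\ov)$ (the paper cites \cite[Theorem~4.7]{pr}), then apply Theorem~\ref{strict} to $\Omega=\gph\sub f$ and translate tangent-cone convergence into graph-convergence of graphical derivatives via $\gph D(\sub f)(x,v)=T_{\gph\sub f}(x,v)$. The only difference is that you additionally sketch the resolvent argument behind the Lipschitzian manifold property, which the paper simply cites.
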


\begin{proof} Recall that (a) amounts to   $\gph\partial f$ being strictly smooth at $(\ox, \ov)$  and that for all $(x, v) \in \gph \partial f$ we have $\gph D(\partial f)(x, v) = T_{\gph\partial f}(x, v)$.
Also, as mentioned above, we know that proto-differentiability of $\partial f$ at $x$ for $v$, for $(x, v)\in \gph \partial f$, amounts to $\wch T_{\gph \partial f}(x, v)=T_{\gph \partial f}(x, v)$.
According to \cite[Theorem~4.7]{pr},  the graphical set  $\gph\sub f$ is a Lipschitzian manifold around  $(\ox,\ov)$. 
The equivalence of (a)-(d) is a direct consequence of Theorem~\ref{strict}.
\end{proof}

Note that the equivalence of (a), (c), and (d) in Corollary~\ref{sted} were observed before in \cite[Corollary~4.3]{pr2} using an argument via the Moreau envelope of prox-regular functions.
While  the properties in (c) and (d) also appeared in \cite[Corollary~4.3]{pr2}, the latter didn't determine to which mapping   the graph-convergences happen. Also, the property in (b)
is new and was not observed in \cite{pr2}.

\section{Remarkable Consequences of Strict Proto-Differentiability}\label{sec3}

This section is devoted to establishing some interesting consequences of strict proto-differentiability of subgradient mappings of prox-regular functions, which 
highlight the importance of this property in second-order variational analysis of various classes of optimization problems. We begin with recalling the definition of generalized quadratic form from \cite[Definition~2.1]{nt}. 

\begin{Definition} Suppose that  $\ph:\X\to \oR$ is a proper function. 
\begin{itemize}[noitemsep,topsep=2pt]
\item [ \rm {(a)}] We say that the subgradient mapping $\sub \ph:\X\tto \X$ is {  generalized linear} if its graph is a linear subspace of $\X\times \X$.
\item [ \rm {(b)}]  We say that $\ph$ is a generalized quadratic form on $\X$ if $\dom \ph$ is a linear subspace of $\X$ and there exists a linear symmetric mapping $A$ from $\dom \ph$ to $\X$ {\rm(}i.e. $\la A(x),y\ra=\la x,A(y)\ra$
for any $x,y\in \dom \ph${\rm)} such that $\ph$ has a representation of form 
$$
\ph(x)=\la A(x),x\ra \quad \mbox{for all}\;\; x\in \dom \ph.
$$ 
\end{itemize}
 \end{Definition}
It was shown in  \cite[Theorem~2.5]{nt} that 
 for a proper lsc convex function $\ph:\X\to \oR$ with $\ph(0)=0$,  $\ph$ is a generalized quadratic form on $\X$ if and only if $  \sub \ph$ is generalized linear. 
In this case,  one can glean from the proof of the latter result that 
 \begin{equation}\label{gqf}
 \ph(x)=  \sm \la A(x),x\ra+\dd_S(x) \quad \mbox{for}\;\; x\in \X,
\end{equation}
 where $S:=\dom \sub \ph$ is a linear subspace of $\X$ and $A: S\to S$ is linear and symmetric mapping defined by  $A(x)= P_{S}(y)$ for an arbitrary $y\in \partial \varphi(x)$\footnote{Interested readers can find more details about this representation in the arXiv version of this paper available  at \url{ arxiv.org/abs/2311.06026}}.
 Define now $\widehat A:\X\to \X$ by  $\widehat A(x)=A (P_S(x))$ for $x\in \X$. 
It is not hard to  see that $\widehat A$ is a linear symmetric mapping on $X$ satisfying $\widehat A (x) = A(x)$ for all $x\in S$, and therefore can equivalently represent $\ph$ as follows
\begin{equation*}
 \ph(x)=  \sm \la \widehat A(x),x\ra+\dd_S(x) \quad \mbox{for all}\;\; x\in \X.
\end{equation*}

To present our first result in this section, we begin by recalling the concept of second subderivatives. Given a function $f:\X\to \oR$ and $\ox\in \X$ with $f(\ox)$ finite, 
  the {second subderivative} of $f$ at $\ox$ for $\ov\in \sub f(\ox)$ is   defined  by 
\begin{equation*}\label{ssd}
\d^2 f(\bar x , \ov)(w)= \liminf_{\substack{
   t\searrow 0 \\
  w'\to w
  }} \Delta_t^2 f(\ox , \ov)(w'),\;\; w\in \X,
\end{equation*}
where $\Delta_t^2 f(\ox , \ov)(w'):=(f(\ox+tw')-f(\ox)-t\langle \ov,\,w'\rangle)/\frac {1}{2}t^2$  is second-order difference quotients of $f$ at $\ox$ for $\ov$. 
According to \cite[Definition~13.6]{rw},   $f$ is called  {\em twice epi-differentiable} at $\bar x$ for $\ov$ if  the functions $ \Delta_t^2 f(\bar x , \ov)$ epi-converge to $  \d^2 f(\bar x,\ov)$ as $t\searrow 0$. 
Twice epi-differentiability of a prox-regular function is equivalent to proto-differentiability of its subgradient mapping, according to \cite[Theorem~13.40]{rw}, and has been studied extensively in \cite{mms,ms20,ms23,rw}
for different classes of functions. Following \cite[Definition~8.45]{rw}, we say that  $v \in \X$ is  a proximal subgradient of $f$ at $\ox$  if there exists $r \ge 0$ and   a neighborhood $U$ of $\ox$ such that for all $x \in U$, one has
\begin{equation*}\label{proxs}
f(x) \geq f(\ox) + \la v , x - \ox \ra - \frac{r}{2} \|x - \ox \|^2. 
\end{equation*}
Recall also that a set $\Omega\subset \X$ is called regular at $\ox\in \Omega$ if it is locally closed around $\ox$ and $\rN_\Omega(\ox)=N_\Omega(\ox)$. Below, we show that 
strict proto-differentiability of subgradient mappings immediately implies that these subgradient mappings are generalized linear. 
While parts (a) and (b) of the following result were observed before in \cite[Corollary~4.3]{pr2}, we provide a different proof for  (a). The proof of (b) should be known but we could find 
it in any publications. We supply a proof of (b) as well  for the readers' convenience.

\begin{Proposition} \label{glp} Assume that $f:\X\to \oR$ is prox-regular and subdifferentially continuous at $\ox$ for  $\ov\in   \sub f(\ox)$ and that $\sub f$ is strictly proto-differentiable at $\ox$ for $\ov$. Then 
the following properties hold.
\begin{itemize}[noitemsep,topsep=2pt]
\item [ \rm {(a)}] $D( \sub f)(\ox,\ov)$ is generalized linear.
\item [ \rm {(b)}]  The set $\K:=\dom \d^2 f(\ox,\ov)$ is a linear subspace and there is a linear symmetric mapping $A:\X\to \X$ for which the second subderivative of $f$  
at $\ox$ for $\ov$ has a representation of the form  
\begin{equation}\label{gqf1}
\d^2 f(\ox,\ov)(w)= \sm \la A(w),w\ra+\dd_{\K}(w), \quad w\in \X.
\end{equation}
Consequently, $\d^2 f(\ox,\ov)$ is a generalized quadratic form on $\X$. 
\item [ \rm {(c)}]  $\gph \sub f$ is regular at $(\ox,\ov)$.
 \end{itemize}
\end{Proposition}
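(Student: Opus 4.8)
The plan is to exploit the identification of strict proto-differentiability of $\sub f$ with strict smoothness of $\gph\sub f$ at $(\ox,\ov)$, together with the fact from \cite[Theorem~4.7]{pr} that $\gph\sub f$ is a Lipschitzian manifold around $(\ox,\ov)$. For part~(a), the key observation is that strict smoothness means $\rt_{\gph\sub f}(\ox,\ov)=\widetilde T_{\gph\sub f}(\ox,\ov)$; by the chain of inclusions \eqref{inct} all four tangential objects collapse to a single set, so $\gph D(\sub f)(\ox,\ov)=T_{\gph\sub f}(\ox,\ov)=\rt_{\gph\sub f}(\ox,\ov)$ equals the paratingent cone. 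Since the regular tangent cone is always a convex cone and, by Proposition~\ref{parat}(a), the paratingent cone is symmetric ($\widetilde T=-\widetilde T$), the common set is a convex cone that is symmetric about the origin, hence a linear subspace; this says precisely that $D(\sub f)(\ox,\ov)$ is generalized linear.

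For part~(b): since $f$ is prox-regular and subdifferentially continuous at $\ox$ for $\ov$ and $\sub f$ is strictly (hence in particular) proto-differentiable, \cite[Theorem~13.40]{rw} gives that $f$ is twice epi-differentiable at $\ox$ for $\ov$, and the second subderivative $\d^2 f(\ox,\ov)$ is proper, lsc and (again by prox-regularity, via the Moreau-envelope/\cite[Theorem~13.40]{rw} machinery) convex. Moreover the relation between the second subderivative and the graphical derivative of $\sub f$, namely $\sub\bigl(\tfrac12\d^2 f(\ox,\ov)\bigr)=D(\sub f)(\ox,\ov)$ from \cite[Theorem~13.40]{rw}, lets us transfer the conclusion of part~(a): $\sub\bigl(\tfrac12\d^2 f(\ox,\ov)\bigr)$ is generalized linear. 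After subtracting the constant so that the function vanishes at $0$ (note $\d^2 f(\ox,\ov)(0)=0$), we may invoke \cite[Theorem~2.5]{nt}: a proper lsc convex function with value $0$ at the origin is a generalized quadratic form iff its subgradient mapping is generalized linear. This yields the representation \eqref{gqf} for $\tfrac12\d^2 f(\ox,\ov)$, i.e. $\d^2 f(\ox,\ov)(w)=\tfrac12\la A(w),w\ra+\dd_{\K}(w)$ with $\K=\dom\d^2 f(\ox,\ov)$ a linear subspace and $A$ the linear symmetric map furnished there (extended to all of $\X$ by $\widehat A$ as in the discussion after \eqref{gqf} if a globally defined operator is desired); this is exactly \eqref{gqf1}, and the final sentence of (b) is then immediate.

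For part~(c): regularity of $\gph\sub f$ at $(\ox,\ov)$ means $\rN_{\gph\sub f}(\ox,\ov)=N_{\gph\sub f}(\ox,\ov)$. Strict smoothness gives $T_{\gph\sub f}(\ox,\ov)=\rt_{\gph\sub f}(\ox,\ov)$, which by part~(a) is a linear subspace $L$; hence $\rN_{\gph\sub f}(\ox,\ov)=L^{\perp}$. Using Theorem~\ref{strict}, strict smoothness also gives $T_{\gph\sub f}(x,v)\to T_{\gph\sub f}(\ox,\ov)=L$ as $(x,v)\to(\ox,\ov)$ in $\gph\sub f$, so the regular normal cones $\rN_{\gph\sub f}(x,v)=T_{\gph\sub f}(x,v)^{*}$ converge to $L^{*}=L^{\perp}$ (polarity is continuous under set convergence of cones, e.g. \cite[Corollary~11.35]{rw}); since the limiting normal cone is by definition the outer limit of the regular normal cones, $N_{\gph\sub f}(\ox,\ov)\subset L^{\perp}=\rN_{\gph\sub f}(\ox,\ov)$, and the reverse inclusion is automatic. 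Local closedness of $\gph\sub f$ around $(\ox,\ov)$ holds because it is a Lipschitzian manifold there, completing the verification of regularity.

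The main obstacle I expect is part~(b): one has to be careful that the hypotheses of \cite[Theorem~2.5]{nt}—in particular convexity of the second subderivative and the value-zero normalization—are genuinely available here, and that the identity $\sub(\tfrac12\d^2 f(\ox,\ov))=D(\sub f)(\ox,\ov)$ from \cite[Theorem~13.40]{rw} is applicable under prox-regularity plus twice epi-differentiability; once these are in place, parts (a) and (c) are short tangent-cone computations.
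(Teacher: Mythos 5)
Your arguments for parts (a) and (c) are sound. Part (a) is exactly the paper's argument (convex regular tangent cone plus symmetry of the paratingent cone forces a linear subspace). For part (c) you take a longer route than the paper, which simply notes that $\rt_{\gph \sub f}(\ox,\ov)=T_{\gph \sub f}(\ox,\ov)$ and invokes \cite[Corollary~6.29(b)]{rw}; your convergence-of-tangent-cones argument via polarity also works (the outer limit of the polars $T_{\gph\sub f}(x,v)^*$ is always contained in the polar of the inner limit, so convexity of the nearby tangent cones is not needed), but it is unnecessary machinery.

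Part (b), however, contains a genuine gap: you assert that $\d^2 f(\ox,\ov)$ is convex ``by prox-regularity, via the Moreau-envelope machinery,'' and you then apply \cite[Theorem~2.5]{nt} directly to $\tfrac{1}{2}\d^2 f(\ox,\ov)$. This is false in general. Prox-regularity only yields a lower quadratic bound on the second-order difference quotients, not convexity of their epi-limit. A concrete counterexample is $f(x)=-x^2/2$ on $\R$: it is ${\cal C}^2$, hence prox-regular and subdifferentially continuous, and $\sub f=\nabla f$ is strictly proto-differentiable everywhere, yet $\d^2 f(0,0)(w)=-w^2$ is concave, not convex. Since \cite[Theorem~2.5]{nt} is stated for proper lsc \emph{convex} functions, your invocation of it is not justified. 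The repair is the paper's route: by \cite[Proposition~13.49]{rw} there is $\rho\ge 0$ such that $\ph(w):=\d^2 f(\ox,\ov)(w)+\rho\|w\|^2$ is convex (and proper, lsc, with $\ph(0)=0$); one checks $\sub\ph(w)=2D(\sub f)(\ox,\ov)(w)+2\rho w$, so $\sub\ph$ is still generalized linear by part (a); applying the generalized-quadratic-form representation to $\ph$ gives $\ph(w)=\tfrac{1}{2}\la \widehat A(w),w\ra+\dd_{\K}(w)$ with $\K=\dom\ph=\dom\d^2 f(\ox,\ov)$ a linear subspace, and setting $A:=\widehat A-2\rho I$ recovers \eqref{gqf1}. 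Without this convexification step your argument does not go through.
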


\begin{proof} Strict proto-differentiability of $\sub f$ at $\ox$ for $\ov$ amounts to saying that $\rt_{\gph \sub f}(\ox,\ov)=\widetilde T_{\gph \sub f}(\ox,\ov)=T_{\gph \sub f}(\ox,\ov)$.
By Proposition~\ref{parat}(a), we have $\widetilde T_{\gph \sub f}(\ox,\ov)=-\widetilde T_{\gph \sub f}(\ox,\ov)$. Since  the regular tangent cone $\rt_{\gph \sub f}(\ox,\ov)$ is a convex cone (cf. \cite[Theorem~6.26]{rw}),
we conclude that  $T_{\gph \sub f}(\ox,\ov)$ is a linear subspace of $\X\times \X$. This, together  with   $\gph D( \sub f)(\ox,\ov)=T_{\gph \sub f}(\ox,\ov)$, confirms that $D( \sub f)(\ox,\ov)$ is generalized linear and hence proves (a).

To justify (b), observe first that strict proto-differentiability of $\sub f$ at $\ox$ for $\ov$ implies proto-differentiability of $\sub f$ at $\ox$ for $\ov$. So, by 
 \cite[Theorem~13.40]{rw}, we have  $  D( \sub f)(\ox,\ov)=\sub (\sm \d^2 f(\ox,\ov))$. Prox-regularity of $f$ at $\ox$ for $\ov$ implies that $\ov$ is a proximal subgradient of $f$ at $\ox$. This, together with \cite[Proposition~2.1]{ms20}, tells us that
$\d^2 f(\ox,\ov)$ is a proper function. Because the second subderivative $\d^2 f(\ox,\ov)$ is positively homogeneous of degree $2$, we arrive at $\d^2 f(\ox,\ov)(0)=0$. By \cite[Proposition~13.49]{rw}, there exists $\rho\ge 0$ such that the function $\ph$, defined by 
$\ph(w)=\d^2 f(\ox,\ov)(w)+ \rho\|w\|^2$ for any $w\in \X$, is convex. Moreover,  $\ph(0)=0$, $\ph$ is lsc, and $\sub \ph(w)=2 D( \sub f)(\ox,\ov)(w)+2 \rho w$. By (a), $D( \sub f)(\ox,\ov)$ is generalized linear, and  so is $\sub \ph$. 
Particularly, $\K =\dom \partial \ph$ is a linear subspace. According to the discussion after \eqref{gqf}, there is a linear symmetric mapping $\widehat A:\X\to \X$ 
such that $\ph(w)=\sm \la \widehat A(w), w\ra+\dd_{\K}(w)$ for any $w\in \X$. Setting $A:=\widehat A-2\rho I$ with $I$ being the identity mapping from $\X$ onto $\X$, we arrive at  \eqref{gqf1},
which proves (b). 

Turing now to (c), we deduce from strict proto-differentiability of $\sub f$ at $\ox$ that $\rt_{\gph \sub f}(\ox,\ov) =T_{\gph \sub f}(\ox,\ov)$.
Employing \cite[Corollary~6.29(b)]{rw} illustrates that $\gph \sub f$ is regular at $(\ox,\ov)$, and hence completes the proof.
\end{proof}

Given a function $f:\X \to \oR$ and  $\ox\in\X$ with $f(\ox)$ finite, the subderivative function $\d f(\ox)\colon\X\to\oR$ is defined by
\begin{equation*}\label{fsud}
\d f(\ox)(w)=\liminf_{\substack{
   t\searrow 0 \\
  w'\to w
  }} {\frac{f(\ox+tw')-f(\ox)}{t}}.
\end{equation*}
The critical cone of $f$ at $\ox$ for   $\bar v\in   \sub f(\ox)$ is defined by 
\begin{equation*}\label{cricone}
{K_f}(\ox,\bar v)=\big\{w\in \X\,\big|\,\la\bar v,w\ra=\d f(\ox)(w)\big\}.
\end{equation*}
For $f=\dd_\Omega$, the indicator function of a nonempty subset $\Omega\subset \X$, the critical cone of $\dd_\Omega$ at $\ox$ for $\ov$ is denoted by $K_\Omega(\ox,\ov)$. In this case, the above definition of the critical cone of a function 
boils down to  the well known concept of a critical cone of a set (see \cite[page~109]{DoR14}), namely $K_\Omega(\ox,\ov)=T_\Omega(\ox)\cap [\ov]^\perp$.
Recall also from \cite[Definition~7.25]{rw} that an lsc function $f: \X\to \oR$ is said to be subdifferentially regular at $\ox\in \X$  if    $f(\ox)$ is   finite and $\rN_{\epi f}(\ox, f(\ox))= N_{\epi f}(\ox,f(\ox))$. 
Below, we record a simple observation about the critical cone of prox-regular function, used often in this paper.

\begin{Proposition} \label{proxcri} Assume that $f:\X\to \oR$ is prox-regular and subdifferentially continuous at $\ox$ for  $\ov\in   \sub f(\ox)$.
Then the following properties are satisfied. 
\begin{itemize}[noitemsep,topsep=2pt]
\item [ \rm {(a)}]  The second subderivative $\d^2 f(\ox,\ov)$ is a proper function and $$\cl\big(\dom \d^2 f(\ox,\ov)\big)\subset K_f(\ox,\bar v)\subset N_{\Hat\sub f(\ox)}(\ov).$$
\item [ \rm {(b)}] If, in addition,  $f$ is subdifferentially regular at $\ox$, then we have   ${K_f}(\ox,\bar v)= N_{ \sub f(\ox)}(\ov)$ and consequently  $K_f(\ox,\ov)$ is a closed convex cone. 
\end{itemize}
\end{Proposition}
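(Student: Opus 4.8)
The plan is to reduce both parts to first-order variational calculus, letting prox-regularity enter only through two of its consequences: that $\ov$ is a proximal — and hence regular — subgradient of $f$ at $\ox$, and that $\d^2 f(\ox,\ov)$ is proper. The first holds by taking $x=\ox$ and $v=\ov$ in \eqref{prox}, which gives precisely $f(x')\ge f(\ox)+\la\ov,x'-\ox\ra-\tfrac{r}{2}\|x'-\ox\|^2$ for $x'$ near $\ox$. Properness of $\d^2 f(\ox,\ov)$ then follows as in the proof of Proposition~\ref{glp}(b), via \cite[Proposition~2.1]{ms20} (or directly: the bound $\Delta_t^2 f(\ox,\ov)(w')\ge-r\|w'\|^2$ shows $\d^2 f(\ox,\ov)(w)\ge-r\|w\|^2$ for all $w$, while positive homogeneity of degree $2$ gives $\d^2 f(\ox,\ov)(0)=0$).

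For the inclusion $\cl\big(\dom\d^2 f(\ox,\ov)\big)\subset K_f(\ox,\ov)$, I would take $w$ with $\d^2 f(\ox,\ov)(w)$ finite, pick $t_k\searrow0$ and $w^k\to w$ realizing the $\liminf$ in the definition of the second subderivative, and note that the quotients $\Delta_{t_k}^2 f(\ox,\ov)(w^k)$ are then bounded, say by $M$. Dividing by $t_k$ yields $\big(f(\ox+t_k w^k)-f(\ox)\big)/t_k\le\la\ov,w^k\ra+\tfrac{1}{2}Mt_k$, so passing to the limit gives $\d f(\ox)(w)\le\la\ov,w\ra$. Since $\ov\in\Hat\sub f(\ox)$ the reverse inequality $\d f(\ox)(w)\ge\la\ov,w\ra$ also holds (a regular subgradient is dominated pointwise by the subderivative), hence $w\in K_f(\ox,\ov)$; and because $\d f(\ox)$ is lsc and everywhere dominates $\la\ov,\cdot\ra$, the set $K_f(\ox,\ov)=\{w\mid\d f(\ox)(w)\le\la\ov,w\ra\}$ is closed, so the inclusion survives taking the closure. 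For the second inclusion, let $w\in K_f(\ox,\ov)$; using again that every $v\in\Hat\sub f(\ox)$ satisfies $\la v,w\ra\le\d f(\ox)(w)$, together with $\ov\in\Hat\sub f(\ox)$, we obtain $\sigma_{\Hat\sub f(\ox)}(w)\le\d f(\ox)(w)=\la\ov,w\ra\le\sigma_{\Hat\sub f(\ox)}(w)$, so $\ov$ maximizes $\la\cdot,w\ra$ over the convex set $\Hat\sub f(\ox)$, which means exactly $w\in N_{\Hat\sub f(\ox)}(\ov)$.

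For part (b), subdifferential regularity of $f$ at $\ox$ gives $\Hat\sub f(\ox)=\sub f(\ox)$ and makes $\d f(\ox)$ a proper lsc sublinear function, so that $\d f(\ox)=\sigma_{\sub f(\ox)}$ (see \cite[Theorem~8.30]{rw}). Substituting this into the definition of the critical cone upgrades the one-sided inequality of part (a) to an equality: $K_f(\ox,\ov)=\{w\mid\sigma_{\sub f(\ox)}(w)\le\la\ov,w\ra\}=N_{\sub f(\ox)}(\ov)$, the last set being a closed convex cone as the normal cone to the closed convex set $\sub f(\ox)$ at one of its points.

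The only genuinely delicate point is the first inclusion in (a): one must check that mere finiteness of $\d^2 f(\ox,\ov)(w)$ already controls the first-order behaviour of $f$ in the direction $w$ tightly enough to place $w$ in the critical cone. Once the difference-quotient estimate above is set up this is routine, and the rest is bookkeeping with the support-function/sublinearity calculus for (regular) subderivatives and the standard characterizations of regular subgradients from \cite[Chapter~8]{rw}.
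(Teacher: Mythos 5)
Your argument is correct and follows essentially the same route as the paper: properness of $\d^2 f(\ox,\ov)$ via $\ov$ being a proximal subgradient, the two inclusions of part (a) (which the paper simply cites as \cite[Proposition~13.5]{rw} and you re-derive directly from the difference quotients and the regular-subgradient inequality), closedness of $K_f(\ox,\ov)$ as a sublevel set of the lsc function $w\mapsto \d f(\ox)(w)-\la\ov,w\ra$, and part (b) via $\d f(\ox)=\sigma_{\sub f(\ox)}$ from \cite[Theorem~8.30]{rw}. No gaps.
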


\begin{proof} 
Observe first that prox-regularity of $f$ at $\ox$ for $\ov$ implies that $\ov$ is a proximal subgradient of $f$ at $\ox$. This, together with \cite[Proposition~2.1]{ms20}, implies that
$\d^2 f(\ox,\ov)$ is a proper function. Appealing now to \cite[Proposition~13.5]{rw} gives us the inclusions  $\dom \d^2 f(\ox,\ov)\subset K_f(\ox,\bar v)\subset N_{\Hat\sub f(\ox)}(\ov)$. We 
are now going to show that $K_f(\ox,\ov)$ is closed. 
To this end, recall that $\ov$ is a proximal subgradient of $f$ at $\ox$, which results in $\ov\in \Hat\sub f(\ox)$. 
Employing \cite[Exercise~8.4]{rw} then allows us to represent $K_f(\ox, \ov)$ as the  level set $\big\{w\in \X\,\big|\,\d f(\ox)(w)- \la\bar v,w\ra\leq 0\big\}$ of an lsc function $w\mapsto \d f(\ox)(w) - \la \ov, w\ra$ (cf. \cite[Proposition~7.4]{rw}). Thus, we conclude that $K_f(\ox, \ov)$ is a closed set and complete the proof of (a).

%To this end, take a sequence $\{w^k\b\subset K_f(\ox,\ov)$ converging to $w\in \X$ and conclude then that $\la\bar v,w^k\ra=\d f(\ox)(w^k)$ for any $k$.
%Passing to the limit and using the fact that the subderivative is lsc function bring us to $\la\bar v,w\ra\ge \d f(\ox)(w)$. As argued above, $\ov$ is a proximal subgradient of $f$ at $\ox$,
%which results in $\ov\in \Hat\sub f(\ox)$. By \cite[Exercise~8.4]{rw}, we obtain   $\d f(\ox)(w)\ge  \la \ov, w\ra$, which ensures $w\in K_f(\ox,\ov)$ and hence $K_f(\ox,\ov)$ is a closed set. This finishes the proof of (a).

To justify (b), we infer from \cite[Theorem~8.30]{rw} and subdifferential regularity of $f$ at $\ox$ that
\begin{equation*}
{K_f}(\ox,\bar v)=\big\{w\in \X\,\big|\,\la\bar v,w\ra=\sup_{v\in \sub f(\ox)}\la v, w\ra\big\} = N_{\sub f(\ox)}(\ov),
\end{equation*}
where the second equality is due to convexity of $\sub f(\ox) = \Hat\sub f (\ox)$.
This proves (b) and completes the proof. 
%pick $w\in N_{\Hat\sub f(\ox)}(\ov)$ and conclude that 
%$$
%\d f(\ox)(w)=\sup\big\{\la v,w\ra|\; v\in \Hat\sub f(\ox)\big \}\le \la \ov, w\ra,
%$$
%where the equality above comes from \cite[Theorem~8.30]{rw}. It follows also from subdifferential regularity of $f$ at $\ox$ that $\sub f(\ox)=\Hat\sub f(\ox)$. Thus, we have 
%$\ov\in \Hat\sub f(\ox)$, which implies  via \cite[Exercise~8.4]{rw} that $\d f(\ox)(w)\ge  \la \ov, w\ra$. Combining these tells us that $w\in K_f(\ox,\ov)$, 
%and hence  $ N_{\Hat\sub f(\ox)}(\ov)\subset K_f(\ox,\ov)$. This proves (b) and completes the proof. 
\end{proof}

We proceed with a consequence of subdifferential regularity and strict proto-differentiability, which is important for our characterization of the latter concept for ${\cal C}^2$-decomposable functions in Section~\ref{chain}.

\begin{Proposition} \label{pedcon} Assume that $f:\X\to \oR$ is prox-regular and subdifferentially continuous at $\ox$ for  $\ov\in   \sub f(\ox)$ and that $\sub f$ is strictly proto-differentiable at $\ox$ for $\ov$. 
Assume further that $f$ is subdifferentially regular at $\ox$ and   that   $\dom \d^2 f(\ox,\ov)= K_f(\ox,\ov)$. Then, we have $\ov \in \ri   \sub f(\ox)$.
\end{Proposition}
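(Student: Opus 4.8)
The plan is to deduce the conclusion from the classical fact of convex analysis that a point of a nonempty closed convex set lies in its relative interior if and only if the normal cone to the set at that point is a linear subspace. Since $f$ is subdifferentially regular at $\ox$, we have $\sub f(\ox)=\Hat\sub f(\ox)$, so $\sub f(\ox)$ is a nonempty closed convex set with $\ov\in\sub f(\ox)$; hence it suffices to prove that $N_{\sub f(\ox)}(\ov)$ is a linear subspace of $\X$.

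To obtain this, I would invoke Proposition~\ref{glp}(b), which applies here because $f$ is prox-regular and subdifferentially continuous at $\ox$ for $\ov$ and $\sub f$ is strictly proto-differentiable at $\ox$ for $\ov$: it tells us that $\K:=\dom\d^2 f(\ox,\ov)$ is a linear subspace of $\X$. By the standing hypothesis $\dom\d^2 f(\ox,\ov)=K_f(\ox,\ov)$, so $K_f(\ox,\ov)$ is a linear subspace. On the other hand, subdifferential regularity of $f$ at $\ox$ lets us apply Proposition~\ref{proxcri}(b), which yields $K_f(\ox,\ov)=N_{\sub f(\ox)}(\ov)$. Combining these two identities shows that $N_{\sub f(\ox)}(\ov)$ is a linear subspace, as needed.

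It then remains to run the elementary convex-analysis step. Suppose, toward a contradiction, that $\ov\notin\ri\sub f(\ox)$; then $\ov$ is a point of the relative boundary of the closed convex set $\sub f(\ox)$, so by the supporting hyperplane theorem applied within the affine hull of $\sub f(\ox)$ there is a nonzero $d\in\para\sub f(\ox)$ with $\la d,v-\ov\ra\le 0$ for every $v\in\sub f(\ox)$, i.e. $d\in N_{\sub f(\ox)}(\ov)$. Since $N_{\sub f(\ox)}(\ov)$ is a subspace, $-d$ belongs to it as well, which forces $\la d,v-\ov\ra=0$ for all $v\in\sub f(\ox)$, hence $d\perp\para\sub f(\ox)$; together with $d\in\para\sub f(\ox)$ this gives $d=0$, a contradiction. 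Therefore $\ov\in\ri\sub f(\ox)$. I do not expect a genuine obstacle here: the only thing to be careful about is that $\sub f(\ox)$ really is closed and convex, which is exactly what subdifferential regularity provides, and that the normal-cone characterization of relative-interior points is invoked correctly; the real weight of the proposition is carried by Propositions~\ref{glp}(b) and \ref{proxcri}(b).
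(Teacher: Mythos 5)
Your argument is correct and follows essentially the same route as the paper: invoke Proposition~\ref{glp}(b) to see that $\dom \d^2 f(\ox,\ov)=K_f(\ox,\ov)$ is a linear subspace, use Proposition~\ref{proxcri}(b) together with subdifferential regularity to identify $K_f(\ox,\ov)$ with $N_{\sub f(\ox)}(\ov)$, and conclude via the classical fact that a normal cone to a convex set is a subspace exactly at relative-interior points. The only difference is that you prove that last convex-analysis fact by hand, whereas the paper simply cites it.
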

\begin{proof} It follows from Proposition~\ref{proxcri} that  $K_f(\ox,\ov)= N_{ \sub f(\ox)}(\ov)$.  By Proposition~\ref{glp}(b), $\dom \d^2 f(\ox,\ov)=K_f(\ox,\ov)$ must be a linear subspace of $\X$. 
It also results from subdifferential regularity of $f$ at $\ox$ that  $\sub f(\ox)=\Hat\sub f(\ox)$, which tells us that $\sub f(\ox)$ is convex. Thus, it follows from a well-known fact from convex analysis (cf. \cite[Proposition~2.51]{mn}) that $K_f(\ox,\ov)$ being a linear subspace is equivalent to  $\ov\in  \ri   \sub f(\ox) $. This completes the proof.
\end{proof}

Note that by Proposition~\ref{proxcri}(b), one may expect  to assume the condition $\cl\big(\dom \d^2 f(\ox,\ov)\big)= K_f(\ox,\ov)$ instead of  $\dom \d^2 f(\ox,\ov)= K_f(\ox,\ov)$ in the result above.
They are, however, equivalent in the setting of Proposition~\ref{pedcon}, since $\dom \d^2 f(\ox,\ov)$ must be a linear subspace.
% due to the strict proto-differentiability  $\sub f$  at $\ox$ for $\ov$.

Proposition~\ref{pedcon} tells us that strict proto-differentiability of subgradient mappings necessitates that in the presence of subdifferential regularity, the subgradient under consideration must satisfy a certain relative interior condition.
We recently demonstrated in \cite{HJS22, HaS22} that the latter condition is indeed equivalent to strict proto-differentiability of subgradient mappings for polyhedral functions and certain  composite functions. Using Proposition~\ref{pedcon}, we 
will justify a similar characterization for ${\cal C}^2$-decomposable functions in Section~\ref{chain}. 

While the assumptions in Proposition~\ref{pedcon} hold for many   classes of functions, important for applications to constrained and composite optimization,
the assumption about the domain of the second subderivative requires more elaboration. In the framework of Proposition~\ref{pedcon}, it follows from Proposition~\ref{proxcri} that 
$\dom \d^2 f(\ox,\ov)\subset K_f(\ox,\ov)$. 
Equality in the latter inclusion was studied in 
 \cite[Proposition~3.4]{ms20}. To present it, take $w\in \X$ with $\d f(\ox)(w)$ finite and define 
the parabolic subderivative of $f$ at $\ox$ for $w$ with respect to $z$  by 
\begin{equation*}\label{lk02}
\d^2 f(\bar x)(w\verl z)= \liminf_{\substack{
   t\searrow 0 \\
  z'\to z
  }} \dfrac{f(\ox+tw+\frac{1}{2}t^2 z')-f(\ox)-t\d f(\ox)(w)}{\frac{1}{2}t^2}.
\end{equation*}
It was shown  in \cite[Proposition~3.4]{ms20} that the condition $\dom \d^2 f(\bar x)(w\verl \cdot)\neq \emptyset$ for all $w\in K_f(\ox,\ov)$ yields
$\dom \d^2 f(\ox,\ov)= K_f(\ox,\ov)$. 
It is worth mentioning that the latter condition imposed on $\dom \d^2 f(\bar x)(w\verl \cdot)$ is satisfied whenever $f$ is parabolically epi-differentiable at 
$\ox$ for any $w\in K_f(\ox,\ov)$ in the sense of \cite[Definition~13.59]{rw}. 
Note that parabolic epi-differentiability has been studied extensively in \cite{mms,ms20,ms23} and holds for many important classes of functions
including convex piecewise linear-quadratic functions (cf. \cite[Theorem~13.67]{rw}),  ${\cal C}^2$-decomposable functions (cf. \cite[Theorem~6.2]{HaS23}), and spectral functions (cf. \cite[Theorem~4.7]{ms23}).  

\begin{Corollary}Assume that $f:\X\to \oR$ is prox-regular and subdifferentially continuous at $\ox$ for  $\ov\in   \sub f(\ox)$ and that $\sub f$ is strictly proto-differentiable at $\ox$ for $\ov$. 
Assume further that $f$ is subdifferentially regular at $\ox$ and   that $\dom \d^2 f(\bar x)(w\verl \cdot)\neq \emptyset$ for any $w\in K_f(\ox,\ov)$, then 
one has  $\ov \in \ri   \sub f(\ox)$.
\end{Corollary}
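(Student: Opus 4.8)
The plan is to reduce the Corollary to a direct application of Proposition~\ref{pedcon}. The only gap between the hypotheses of this Corollary and those of Proposition~\ref{pedcon} is that the Corollary assumes the parabolic-subderivative condition $\dom \d^2 f(\bar x)(w\verl \cdot)\neq\emptyset$ for all $w\in K_f(\ox,\ov)$, whereas Proposition~\ref{pedcon} requires the equality $\dom \d^2 f(\ox,\ov)=K_f(\ox,\ov)$. So the entire content of the proof is to verify that, under prox-regularity and subdifferential continuity of $f$ at $\ox$ for $\ov$, the parabolic-subderivative condition implies $\dom \d^2 f(\ox,\ov)=K_f(\ox,\ov)$.

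First, I would recall from Proposition~\ref{proxcri}(a) that prox-regularity and subdifferential continuity already give the inclusion $\dom \d^2 f(\ox,\ov)\subset K_f(\ox,\ov)$, so only the reverse inclusion needs to be established. Second, I would invoke \cite[Proposition~3.4]{ms20}, which is exactly the statement quoted in the paragraph preceding the Corollary: the condition $\dom \d^2 f(\bar x)(w\verl \cdot)\neq\emptyset$ for all $w\in K_f(\ox,\ov)$ yields $\dom \d^2 f(\ox,\ov)= K_f(\ox,\ov)$. This closes the gap. Third, with the equality $\dom \d^2 f(\ox,\ov)=K_f(\ox,\ov)$ now in hand, together with the standing assumptions that $f$ is prox-regular, subdifferentially continuous, and subdifferentially regular at $\ox$ for $\ov$ and that $\sub f$ is strictly proto-differentiable at $\ox$ for $\ov$, all hypotheses of Proposition~\ref{pedcon} are met, and it delivers $\ov\in\ri\,\sub f(\ox)$, which is the desired conclusion.

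There is essentially no main obstacle here: the Corollary is a straightforward specialization of Proposition~\ref{pedcon} obtained by replacing its abstract domain condition with a more verifiable sufficient condition drawn from \cite[Proposition~3.4]{ms20}. The only thing to be careful about is to note explicitly that the parabolic-subderivative hypothesis is imposed on the set $K_f(\ox,\ov)$ (not on all of $\X$), which is precisely the form in which \cite[Proposition~3.4]{ms20} requires it; as remarked after the statement of Proposition~\ref{pedcon}, passing between the closure version and the non-closure version of the domain equality is harmless because, under strict proto-differentiability, $\dom\d^2 f(\ox,\ov)$ is forced to be a linear subspace by Proposition~\ref{glp}(b).

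\begin{proof}
By Proposition~\ref{proxcri}(a), prox-regularity and subdifferential continuity of $f$ at $\ox$ for $\ov$ ensure that $\d^2 f(\ox,\ov)$ is a proper function and that $\dom \d^2 f(\ox,\ov)\subset K_f(\ox,\ov)$. On the other hand, the assumption $\dom \d^2 f(\bar x)(w\verl \cdot)\neq \emptyset$ for all $w\in K_f(\ox,\ov)$ allows us to invoke \cite[Proposition~3.4]{ms20}, which gives the reverse inclusion, hence $\dom \d^2 f(\ox,\ov)= K_f(\ox,\ov)$. All the assumptions of Proposition~\ref{pedcon} are therefore satisfied, and that result yields $\ov \in \ri\, \sub f(\ox)$, which completes the proof.
\end{proof}
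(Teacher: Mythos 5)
Your proposal is correct and follows exactly the same route as the paper: the paper's own (one-line) proof likewise combines the preceding discussion of \cite[Proposition~3.4]{ms20} — which converts the parabolic-subderivative hypothesis into the domain equality $\dom \d^2 f(\ox,\ov)= K_f(\ox,\ov)$ — with Proposition~\ref{pedcon}. Your write-up merely makes explicit the inclusion from Proposition~\ref{proxcri}(a) that the paper leaves implicit.
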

\begin{proof} This results from the discussion above, Proposition~\ref{pedcon}, and  \cite[Proposition~3.4]{ms20}.
\end{proof}

\begin{Proposition} \label{crit} Assume that $f:\X\to \oR$ is prox-regular and subdifferentially continuous at $\ox$ for  $\ov\in   \sub f(\ox)$ and that $\sub f$ is  proto-differentiable at $\ox$ for $\ov$. 
Assume further that $f$ is subdifferentially regular at $\ox$ and   that   $\cl \big(\dom \d^2 f(\ox,\ov)\big)= K_f(\ox,\ov)$. Then, we have 
$$
D(\sub f)(\ox,\ov)(0)=N_{K_f(\ox,\ov)}(0)=K_f(\ox,\ov)^*.
$$
\end{Proposition}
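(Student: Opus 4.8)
The plan is to compute $D(\sub f)(\ox,\ov)(0)$ using the fact that proto-differentiability of $\sub f$ gives $D(\sub f)(\ox,\ov) = \sub\bigl(\sm\,\d^2 f(\ox,\ov)\bigr)$ via \cite[Theorem~13.40]{rw}, and then to identify the subdifferential of this second subderivative at the point $0$ with the normal cone to $K_f(\ox,\ov)$ at $0$.

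First I would invoke \cite[Theorem~13.40]{rw}: since $f$ is prox-regular, subdifferentially continuous, and proto-differentiable at $\ox$ for $\ov$, we have $D(\sub f)(\ox,\ov)(w) = \sub\bigl(\sm\,\d^2 f(\ox,\ov)\bigr)(w)$ for all $w\in\X$, so in particular $D(\sub f)(\ox,\ov)(0) = \sub\bigl(\sm\,\d^2 f(\ox,\ov)\bigr)(0)$. Next I would note that by \cite[Proposition~13.49]{rw} the function $g(w) := \sm\,\d^2 f(\ox,\ov)(w)+\rho\|w\|^2$ is convex for some $\rho\ge 0$, with $g(0)=0$; since $\sub\bigl(\sm\,\d^2 f(\ox,\ov)\bigr)(0) = \sub g(0)$ (the $\rho\|w\|^2$ term has gradient $0$ at $w=0$), it suffices to compute $\sub g(0)$ in the sense of convex analysis. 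For a proper lsc convex function $g$ with $g(0)=0$ that is positively homogeneous of degree $2$, a direct computation gives $\sub g(0) = N_{\dom g}(0) = (\cl\dom g)^*$: indeed $v\in\sub g(0)$ iff $g(w)\ge\la v,w\ra$ for all $w$, and because $g(tw)=t^2 g(w)$ for $t>0$ this forces $\la v,w\ra\le 0$ on $\dom g$ (let $t\searrow 0$) and is then automatic, so $\sub g(0)=(\dom g)^* = (\cl\,\dom g)^*$. Finally, $\dom g = \dom\d^2 f(\ox,\ov)$, and by hypothesis $\cl\bigl(\dom\d^2 f(\ox,\ov)\bigr) = K_f(\ox,\ov)$, which by Proposition~\ref{proxcri}(b) is a closed convex cone (using subdifferential regularity of $f$ at $\ox$). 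Hence $D(\sub f)(\ox,\ov)(0) = K_f(\ox,\ov)^* = N_{K_f(\ox,\ov)}(0)$, the last equality being the standard identity $N_C(0)=C^*$ for a closed convex cone $C$.

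The main obstacle I anticipate is making the passage from $\sub\bigl(\sm\,\d^2 f(\ox,\ov)\bigr)(0)$ to an honest convex subdifferential fully rigorous: one needs $\d^2 f(\ox,\ov)$ to be proper (which holds by Proposition~\ref{proxcri}(a)) and the auxiliary function $g$ to be proper lsc convex so that the convex-analysis identity $\sub g(0)=(\dom g)^*$ applies — this is where \cite[Proposition~13.49]{rw} and the positive homogeneity of degree $2$ of $\d^2 f(\ox,\ov)$ (\cite[Proposition~13.5]{rw} or its definition) do the work. A secondary point to handle carefully is that the displayed identity uses $\cl\,\dom\d^2 f$, not $\dom\d^2 f$ itself; but since the polar of a set equals the polar of its closure, replacing $\dom\d^2 f(\ox,\ov)$ by $K_f(\ox,\ov)$ in the polar is harmless, so the stated hypothesis $\cl(\dom\d^2 f(\ox,\ov))=K_f(\ox,\ov)$ is exactly what is needed.
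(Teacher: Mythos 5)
Your proof is correct and follows essentially the same route as the paper's: reduce to $\sub(\sm\,\d^2 f(\ox,\ov))(0)$ via \cite[Theorem~13.40]{rw}, convexify with \cite[Proposition~13.49]{rw}, and use degree-$2$ positive homogeneity with $t\searrow 0$ to identify this set with $\big(\cl\dom \d^2 f(\ox,\ov)\big)^*=K_f(\ox,\ov)^*$. The only step you leave implicit is that the converse inclusion (``is then automatic'') needs $g\ge 0$; the paper secures this by enlarging $\rho$ using the lower bound $\d^2 f(\ox,\ov)(w)\ge -r\|w\|^2$, though it also follows directly from convexity, $g(0)=0$, and homogeneity of degree $2$.
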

\begin{proof} Since $\sub f$ is  proto-differentiable at $\ox$ for $\ov$, it follows from \cite[Theorem~13.40]{rw} that $D( \sub f)(\ox,\ov)(0)= \sub (\sm \d^2 f(\ox,\ov))(0)$.
Using a similar argument as the one in the proof of Proposition~\ref{glp}(b), we find $\rho\ge 0$ such that 
the function $\ph$, defined by $\ph(w)=\d^2 f(\ox,\ov)(w)+\rho \|w\|^2$ for $w\in \X$, is a proper convex function with $\ph(0)=0$.
Moreover, by \cite[Proposition~2.1(iii)]{ms20}, we know that $\d^2 f(\ox,\ov)(w)\ge -r \|w\|^2$ for any $w\in \X$, where $r$ is taken from \eqref{prox}. 
Choosing $\rho$ sufficiently large, we can then assume without loss of generality that $\ph(w)\ge 0$ for any $w\in \X$. 
It follows from the definition of $\ph$ that $\dom \ph= \dom \d^2 f(\ox,\ov)$. 
 Take now $\eta\in D( \sub f)(\ox,\ov)(0)$ and conclude from  $ \sub (\sm\ph)(0)= \sub (\sm \d^2 f(\ox,\ov))(0)$ 
that $\eta\in \sub (\sm\ph)(0)$. Since $\ph$ is convex, we obtain for any $w\in \dom \ph$ and any $t>0$ that 
$$
\la \eta, tw-0\ra \le \sm\ph(tw)-\sm\ph(0)= \sm \d^2 f(\ox,\ov)(tw)+\sm{\rho} \|tw\|^2= t^2( \sm \d^2 f(\ox,\ov)(w)+\sm{\rho} \|w\|^2),
$$
 where the last equality results from the fact the second subderivative is positively homogeneous of degree $2$. Dividing both sides by $t$ and letting then $t\to 0$
 bring us to $\la \eta,w\ra\le 0$ for any $w\in \dom \ph$. Since $\cl (\dom \ph)=K_f(\ox,\ov)$, we arrive at  $\eta\in K_f(\ox,\ov)^*$. 
 
 To justify the opposite inclusion, take $\eta\in K_f(\ox,\ov)^*$ and observe that 
 $$
 \la \eta, w-0\ra \le 0\le  \sm\ph(w)= \sm\ph(w)- \sm\ph(0), \quad \mbox{for all}\;\; w\in K_f(\ox,\ov).
 $$
 This confirms that $\eta\in  \sub (\sm\ph)(0)= \sub (\sm \d^2 f(\ox,\ov))(0)$ and hence $\eta\in D( \sub f)(\ox,\ov)(0)$, which completes the proof. 
\end{proof}

Recall that a set-valued mapping $F:\X\tto \Y$ is said to be metrically subregular at $x$ for  $y \in  F(x)$ if there are a constant $\ell \ge 0$
and a neighborhood $U$ of $x$ such that the estimate $\dist(x',F^{-1}(y))\le \ell\, \dist (y,F(x'))$ holds for any $x'\in U$. 

\begin{Proposition}\label{dom_d2} Assume that $f: \X \to \oR$ is a proper lsc convex function and $(\ox,\ov)\in \gph \sub f$. Then we always have 
$$
T_{\sub f^*(\ov)}(\ox)\subset \ker \d^2 f(\bar x , \ov):= \big\{w\in \X\,|\, \d^2 f(\bar x , \ov)(w)=0\big\}.
$$
Equality holds provided that $\sub f$ is metrically subregular at $\ox$ for $\ov$. In this case, if $\sub f$ is proto-differentiable at $\ox$ for $\ov$, 
the latter equality amounts to $\cl\big(\dom \d^2 f^*(\ov,\ox)\big)=K_{f^*}(\ov,\ox)$.
\end{Proposition}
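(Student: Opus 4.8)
The plan is to establish the three assertions separately. The inclusion will come from the convex subgradient inequality applied at $\ox$ and at nearby points of $\sub f^*(\ov)=(\sub f)^{-1}(\ov)$; the equality under metric subregularity from combining a second-order difference-quotient estimate with Ekeland's variational principle; and the final equivalence from writing both $T_{\sub f^*(\ov)}(\ox)$ and $\ker\d^2 f(\ox,\ov)$ as polar cones. For the inclusion I would pick $w\in T_{\sub f^*(\ov)}(\ox)$ together with $t_k\searrow 0$, $w^k\to w$ such that $\ov\in\sub f(\ox+t_kw^k)$. The subgradient inequality for $f$ at $\ox+t_kw^k$ reads $f(\ox)\ge f(\ox+t_kw^k)-t_k\la\ov,w^k\ra$, while at $\ox$ it reads $f(\ox+t_kw^k)\ge f(\ox)+t_k\la\ov,w^k\ra$; hence $\Delta_{t_k}^2 f(\ox,\ov)(w^k)=0$ for every $k$, so $\d^2 f(\ox,\ov)(w)\le 0$. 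Since $\Delta_t^2 f(\ox,\ov)(w')\ge 0$ for all admissible $t,w'$ by the same convexity, this forces $\d^2 f(\ox,\ov)(w)=0$, i.e.\ $w\in\ker\d^2 f(\ox,\ov)$.

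For the reverse inclusion, assume $\sub f$ is metrically subregular at $\ox$ for $\ov$ and take $w$ with $\d^2 f(\ox,\ov)(w)=0$. From the definition of the lower limit one finds $t_k\searrow 0$ and $w^k\to w$ with $\ve_k:=f(\ox+t_kw^k)-f(\ox)-t_k\la\ov,w^k\ra=o(t_k^2)$ and $\ve_k\ge 0$. Thus $x^k:=\ox+t_kw^k$ is an $\ve_k$-minimizer of the proper lsc convex function $h:=f-\la\ov,\cdot\ra$, whose infimum $-f^*(\ov)$ is attained at $\ox$ by Fenchel's equality. Applying Ekeland's variational principle with radius $\sqrt{\ve_k}=o(t_k)$ produces $\tilde x^k$ with $\|\tilde x^k-x^k\|\le\sqrt{\ve_k}$ and $\dist\big(\ov,\sub f(\tilde x^k)\big)=\dist\big(0,\sub h(\tilde x^k)\big)\le\sqrt{\ve_k}=o(t_k)$. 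Since $\tilde x^k\to\ox$, metric subregularity yields $\ell\ge 0$ with $\dist\big(\tilde x^k,\sub f^*(\ov)\big)\le\ell\,\dist\big(\ov,\sub f(\tilde x^k)\big)=o(t_k)$ for $k$ large; choosing $\xi^k\in\sub f^*(\ov)$ with $\|\xi^k-\tilde x^k\|=o(t_k)$ gives $(\xi^k-\ox)/t_k=w^k+o(1)\to w$, so $w\in T_{\sub f^*(\ov)}(\ox)$. The delicate point is the matching of orders of magnitude: the genuine $o(t_k^2)$ bound on $\ve_k$ is exactly what lets the Ekeland radius be chosen $o(t_k)$ while still forcing $\dist(\ov,\sub f(\tilde x^k))=o(t_k)$, and this is the main obstacle of the proof. (Alternatively, one may invoke the characterization of metric subregularity of the convex subdifferential via quadratic growth of $h$ relative to $\sub f^*(\ov)$ near $\ox$, which yields $\dist(x^k,\sub f^*(\ov))\le\sqrt{\ve_k/c}$ directly.)

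Finally, assume in addition that $\sub f$ is proto-differentiable at $\ox$ for $\ov$. Since $f^*$ is proper, lsc and convex, it is prox-regular, subdifferentially continuous and subdifferentially regular at $\ov$ for $\ox$, and $\sub f^*$ is proto-differentiable at $\ov$ for $\ox$, because the coordinate swap is a linear isomorphism carrying $\gph\sub f$ onto $\gph\sub f^*$ and commuting with tangent and adjacent cones. Proposition~\ref{proxcri}(b) applied to $f^*$ gives $K_{f^*}(\ov,\ox)=N_{\sub f^*(\ov)}(\ox)$, whence $T_{\sub f^*(\ov)}(\ox)=N_{\sub f^*(\ov)}(\ox)^*=K_{f^*}(\ov,\ox)^*$ by the bipolar theorem, the set $\sub f^*(\ov)$ being closed and convex. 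On the other hand $D(\sub f^*)(\ov,\ox)=D(\sub f)(\ox,\ov)^{-1}$ and, by \cite[Theorem~13.40]{rw}, $D(\sub f)(\ox,\ov)=\sub\big(\sm\d^2 f(\ox,\ov)\big)$, where $\d^2 f(\ox,\ov)$ is a proper lsc convex function that is nonnegative and vanishes at $0$; therefore $\ker\d^2 f(\ox,\ov)=\{w\,|\,0\in\sub(\sm\d^2 f(\ox,\ov))(w)\}=D(\sub f)(\ox,\ov)^{-1}(0)=D(\sub f^*)(\ov,\ox)(0)$. Running the argument in the proof of Proposition~\ref{crit} for $f^*$ at $\ov$ for $\ox$ — but keeping the conclusion in terms of $\cl\dom\d^2 f^*(\ov,\ox)$ rather than imposing its equality with $K_{f^*}(\ov,\ox)$, and taking $\rho=0$ there since $\d^2 f^*(\ov,\ox)$ is already nonnegative and convex — identifies $D(\sub f^*)(\ov,\ox)(0)=\big(\cl\dom\d^2 f^*(\ov,\ox)\big)^*$. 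Hence $T_{\sub f^*(\ov)}(\ox)=\ker\d^2 f(\ox,\ov)$ holds if and only if $K_{f^*}(\ov,\ox)^*=\big(\cl\dom\d^2 f^*(\ov,\ox)\big)^*$; since $K_{f^*}(\ov,\ox)$ and $\cl\dom\d^2 f^*(\ov,\ox)$ are closed convex cones (the latter because $\d^2 f^*(\ov,\ox)$ is convex), taking polars once more yields the asserted equivalence with $\cl\big(\dom\d^2 f^*(\ov,\ox)\big)=K_{f^*}(\ov,\ox)$.
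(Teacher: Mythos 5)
Your proposal is correct, and parts of it take a genuinely different route from the paper. The first inclusion is argued exactly as in the paper (subgradient inequality giving $\Delta_{t_k}^2 f(\ox,\ov)(w^k)\le 0$, plus nonnegativity from convexity). For the reverse inclusion under metric subregularity, the paper simply invokes the Arag\'on--Geoffroy equivalence between metric subregularity of $\sub f$ and the quadratic growth condition \eqref{qgc}, and then reads off $\dist(\ox+t_kw^k,(\sub f)^{-1}(\ov))=o(t_k)$ from the difference quotients; your main route instead re-derives the needed estimate from scratch via Ekeland's variational principle applied to $h=f-\la\ov,\cdot\ra$, with the order-matching $\ve_k=o(t_k^2)\Rightarrow\sqrt{\ve_k}=o(t_k)$ doing the work --- this is essentially an inline proof of the relevant half of the quadratic-growth characterization, and your parenthetical alternative is precisely the paper's argument. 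The trade-off: the paper's version is shorter but leans on an external theorem; yours is self-contained at the cost of one more approximation layer. For the final equivalence the paper gives no proof at all (it cites \cite[equation~(4.28)]{r23}), whereas you supply a complete polarity argument: $T_{\sub f^*(\ov)}(\ox)=K_{f^*}(\ov,\ox)^*$ via Proposition~\ref{proxcri}(b) and the bipolar theorem, $\ker\d^2 f(\ox,\ov)=D(\sub f^*)(\ov,\ox)(0)=\bigl(\cl\dom\d^2 f^*(\ov,\ox)\bigr)^*$ via \cite[Theorem~13.40]{rw} and the first half of the proof of Proposition~\ref{crit}, and then one more polar. This is a worthwhile addition, since it is consistent with how Theorem~\ref{msr5} later uses the statement. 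One small point worth making explicit: the last polarity step needs $\cl\bigl(\dom\d^2 f^*(\ov,\ox)\bigr)$ to be a \emph{convex} cone, and the cleanest justification is that proto-differentiability of $\sub f^*$ at $\ov$ for $\ox$ (inherited from that of $\sub f$ by the graph swap) makes $\d^2 f^*(\ov,\ox)$ a full epi-limit of the convex difference quotients, hence convex; you assert the convexity but should anchor it to twice epi-differentiability rather than to convexity of $f^*$ alone.
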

\begin{proof} Take $w\in T_{\sub f^*(\ov)}(\ox)$ and conclude by definition that there are sequences $t_k\searrow 0$ and $w^k\to w$ such that $\ox+t_kw^k\in \sub f^*(\ov)$ for all $k$.
This yields $\ov\in \sub f(\ox+t_kw^k)$, which brings us to 
$$
\frac{f(\ox+t_kw^k) -f(\ox)-t_k \la \ov, w^k\ra}{\tfrac{1}{2}t_k^2}\le 0 \quad \mbox{for all}\;\;k.
$$
Thus, we arrive at $ \d^2 f(\bar x , \ov)(w)\le 0$. Since $f$ is convex, we always have $ \d^2 f(\bar x , \ov)(w)\ge 0$, which proves that $w\in  \ker \d^2 f(\bar x , \ov)$. 

Assume now that $w\in \ker \d^2 f(\bar x , \ov)$ and that $\sub f$ is metrically subregular at $\ox$ for $\ov$. By  \cite[Theorem~3.3]{ag}, the latter   is equivalent to saying that $f$ enjoys  the quadratic growth condition at $\ox$ for $\ov$, meaning that there are 
positive constants $\ve$ and $\ell$ such that  
\begin{equation}\label{qgc}
f(x)\ge f(\ox)+\la \ov, x-\ox\ra + \frac{\ell}{2}\dist^2(x,(\sub f)^{-1}(\ov)) \quad \textrm{ for all }\; x\in \B_\ve(\ox).
\end{equation}
Take sequences $t_k\searrow 0$ and $w^k\to w$ for which we have $\Delta_{t_k}^2 f(\bar x , \ov)(w^k)\to  \d^2 f(\bar x , \ov)(w)$.
Employing \eqref{qgc} tells us that 
$$
\Delta_{t_k}^2 f(\bar x , \ov)(w^k) \ge \frac{\ell \dist^2(\ox+t_kw^k,(\sub f)^{-1}(\ov))}{ t_k^2} \ge  0
$$
for all sufficiently large $k$. Passing to the limit, coupled with $  \d^2 f(\bar x , \ov)(w)=0$,  implies that   $\dist(\ox+t_kw^k,(\sub f)^{-1}(\ov)) =o(t_k)$,
which results in $\ox+t_kw^k +o(t_k)\in (\sub f)^{-1}(\ov) = \sub f^*(\ov)$ for all sufficiently large $k$. This clearly tells us that 
$w\in T_{\sub f^*(\ov)}(\ox)$ and hence proves the claimed equality.
The final claim results from \cite[equation~(4.28)]{r23}.
\end{proof}

Below, we show that strict proto-differentiability 
of subgradient mappings can ensure the opposite implication in Proposition~\ref{dom_d2} holds as well and hence gives a characterization of metric subregularity of subgradient mappings of convex functions. 

\begin{Theorem}\label{msr5} Assume that $f:\X\to \oR$ is a proper lsc convex and $(\ox,\ov)\in   \gph \sub f$ and that $\sub f$ is  strictly proto-differentiable at $\ox$ for $\ov$. 
Then the subgradient mapping $\sub f$ is metrically subregular at $\ox$ for $\ov$ if and only if one of the equivalent conditions $ \ker \d^2 f(\bar x , \ov)=T_{\sub f^*(\ov)}(\ox)$
or  $\dom \d^2 f^*(\ov,\ox)= K_{f^*}(\ov,\ox)$ is satisfied.
\end{Theorem}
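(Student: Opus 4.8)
\emph{Plan.} The substantive part is the ``if'' direction; the ``only if'' direction and the equivalence of the two displayed conditions follow readily from Proposition~\ref{dom_d2}. That proposition gives the always-valid inclusion $T_{\sub f^*(\ov)}(\ox)\subset\ker\d^2 f(\ox,\ov)$ and, under metric subregularity, its reverse. Moreover, since $\gph\sub f^*$ is the transpose of $\gph\sub f$, the mapping $\sub f^*$ is strictly proto-differentiable at $\ov$ for $\ox$, so Proposition~\ref{glp}(b) applied to $f^*$ makes $\dom\d^2 f^*(\ov,\ox)$ a linear subspace; hence the closure in the last assertion of Proposition~\ref{dom_d2} is redundant and $\ker\d^2 f(\ox,\ov)=T_{\sub f^*(\ov)}(\ox)\Longleftrightarrow\dom\d^2 f^*(\ov,\ox)=K_{f^*}(\ov,\ox)$ regardless of metric subregularity. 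Combined with Proposition~\ref{dom_d2}, metric subregularity of $\sub f$ at $\ox$ for $\ov$ thus forces both conditions.

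For the ``if'' direction, assume $\ker\d^2 f(\ox,\ov)=T_{\sub f^*(\ov)}(\ox)=:L$; I will verify the quadratic growth condition \eqref{qgc} for $f$ at $\ox$ for $\ov$ and then invoke \cite[Theorem~3.3]{ag}. Put $M:=(\sub f)^{-1}(\ov)=\sub f^*(\ov)$, $q:=\tfrac12\d^2 f(\ox,\ov)$ and $\bar z:=\ox+\ov$. Since $f$ is convex, each difference quotient $\Delta_t^2 f(\ox,\ov)$ is convex, hence so is its epi-limit $\d^2 f(\ox,\ov)$ (which exists by proto-differentiability and \cite[Theorem~13.40]{rw} and is proper by Proposition~\ref{proxcri}(a)); thus $q$ is proper, lsc, convex, nonnegative with $q(0)=0$ and $\argmin q=\ker\d^2 f(\ox,\ov)=L$. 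By Proposition~\ref{glp}(b) $L$ is a subspace, so $L=T_M(\ox)$ being a subspace of the convex set $M$ forces $\ox\in\ri M$; hence $M$ coincides near $\ox$ with the affine set $\ox+L$. The key structural input is that the linear isomorphism $(x,v)\mapsto(x+v,x)$ of $\X\times\X$ carries $\gph\sub f$ onto the graph of $\prox_f:=(I+\sub f)^{-1}$ and $(\ox,\ov)$ onto $(\bar z,\ox)$, so strict smoothness of $\gph\sub f$ at $(\ox,\ov)$ yields strict smoothness of $\gph\prox_f$ at $(\bar z,\ox)$; since $\prox_f$ is globally single-valued and $1$-Lipschitz, \cite[Proposition~3.1]{r85} shows $\prox_f$ is \emph{strictly differentiable} at $\bar z$, with $\nabla\prox_f(\bar z)=\prox_q$ (from $D\prox_f(\bar z)=(I+D(\sub f)(\ox,\ov))^{-1}$ and $D(\sub f)(\ox,\ov)=\sub q$).

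Consequently the Moreau envelope $e_1 f(z):=\min_x\{f(x)+\tfrac12\|x-z\|^2\}$ has gradient $\nabla e_1 f(z)=z-\prox_f(z)$ that is strictly differentiable at $\bar z$ with ``strict Hessian'' $H:=I-\prox_q\succeq0$; moreover $Hd=0$ exactly when $d\in\argmin q=L$, so $\ker H=L$ and $H$ is positive definite on $L^\perp$. Since $M':=M+\ov$ equals $\argmin(e_1 f-\la\ov,\cdot\ra)$, coincides near $\bar z$ with $\bar z+L$, carries $\nabla e_1 f\equiv\ov$, and $e_1 f$ is affine on it, for $z$ near $\bar z$ with projection $\tilde z$ onto $\bar z+L$ and $d_\perp:=z-\tilde z\in L^\perp$ one integrates $\nabla e_1 f$ along $[\tilde z,z]$, using the \emph{uniform} bound $\|\nabla e_1 f(z')-\ov-H(z'-\tilde z)\|\le\ve\|z'-\tilde z\|$ for $z',\tilde z$ near $\bar z$ (this is precisely where strict, not merely pointwise, differentiability of $\prox_f$ is indispensable), to obtain $e_1 f(z)-e_1 f(\bar z)-\la\ov,z-\bar z\ra\ge c\,\dist^2(z,M')$ for some $c>0$ near $\bar z$. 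Finally, from $e_1 f(x+\ov)\le f(x)+\tfrac12\|\ov\|^2=e_1 f(\bar z)+\la\ov,(x+\ov)-\bar z\ra+\bigl(f(x)-f(\ox)-\la\ov,x-\ox\ra\bigr)$ and $\dist(x+\ov,M')=\dist(x,M)$, this descends to \eqref{qgc} for $f$, completing the proof.

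The main obstacle is the quadratic-growth bound for $e_1 f$: a crude Taylor expansion of $e_1 f$ at $\bar z$ leaves an $o(\|z-\bar z\|^2)$ remainder that cannot be absorbed into $\dist^2(z,M')$ when $z$ is close to $M'$, so one must anchor the second-order estimate at the nearest point of $M'$ — where $e_1 f$ is exactly affine with gradient $\ov$ — and bound the remainder by $\ve\,\dist^2(z,M')$, which is exactly the extra uniformity provided by strict proto-differentiability. That this strictness is genuinely needed is seen from $f(x)=x^4$, where $\prox_f$ is smooth yet $\ker\d^2 f(0,0)=\R\neq\{0\}=T_{\sub f^*(0)}(0)$ and $\sub f$ fails to be metrically subregular at $0$ for $0$.
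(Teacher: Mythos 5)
Your proof is correct, but it takes a genuinely different route from the paper's. For the ``if'' direction the paper argues by contradiction: assuming metric subregularity fails, it extracts a unit vector $u$ with $0\in\widetilde D(\sub f)(\ox,\ov)(u)$ from projections onto $\sub f(x^k)$ and $\sub f^*(\ov)$, converts this via strict proto-differentiability into $u\in D(\sub f^*)(\ov,\ox)(0)=K_{f^*}(\ov,\ox)^*$ (Proposition~\ref{crit}), and pairs it against $u\in N_{\sub f^*(\ov)}(\ox)=K_{f^*}(\ov,\ox)$ (Proposition~\ref{proxcri}(b)) to force $u=0$. You instead verify the quadratic growth condition \eqref{qgc} directly and invoke \cite[Theorem~3.3]{ag}: you pass to $\prox_f=(I+\sub f)^{-1}$ via the linear change of variables $(x,v)\mapsto(x+v,x)$, use \cite[Proposition~3.1]{r85} to upgrade strict smoothness of the graph to strict differentiability of $\prox_f$ at $\ox+\ov$ with derivative $\prox_q$, and integrate $\nabla e_1f$ from the nearest point of the (locally affine) solution set. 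Your route is longer but more quantitative — it produces an explicit growth constant from the smallest positive eigenvalue of $H=I-\prox_q$, yields the structural byproduct $\ox\in\ri\sub f^*(\ov)$, and anticipates the role of strict differentiability of proximal mappings that the paper only develops later in Theorem~\ref{chpr}; the paper's argument is shorter and stays entirely at the level of graphical derivatives and polarity of critical cones. Two small loose ends in your write-up: the unconditional equivalence of the two displayed conditions is asserted by leaning on the last sentence of Proposition~\ref{dom_d2}, which is literally stated only under metric subregularity (the paper does the same; a clean unconditional argument goes through conjugating the generalized quadratic form $q$ to get $\dom\d^2f^*(\ov,\ox)=L^\perp$ and comparing with $N_{\sub f^*(\ov)}(\ox)$); and your closing example $f(x)=x^4$ does not actually show that \emph{strictness} is needed — $\sub f$ there is strictly proto-differentiable — it only illustrates that the kernel condition can fail, consistently with the theorem.
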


\begin{proof} Since  $\sub f$ is strictly proto-differentiable at $\ox$ for $\ov$, it follows from $\sub f^*=(\sub f)^{-1}$
that $\sub f^*$ is strictly proto-differentiable at $\ov$ for $\ox$. By Proposition~\ref{glp}(b), $\dom \d^2 f^*(\ov,\ox)$ is a linear subspace of $\X$ and thus is closed. The equivalence between $\ker \d^2 f(\bar x , \ov)=T_{\sub f^*(\ov)}(\ox)$
and  $\dom \d^2 f^*(\ov,\ox)= K_{f^*}(\ov,\ox)$ then results from Proposition~\ref{dom_d2}.

Suppose that  $\dom \d^2 f^*(\ov,\ox)= K_{f^*}(\ov,\ox)$ but  $\sub f$ is not metrically subregular at $\ox$ for $\ov$. 
Thus, we  find a sequence  $x^k\to \ox$ as $k\to \infty$ such that 
$$
k\dist (\ov, \sub f(x^k))< \dist (x^k, (\sub f)^{-1}(\ov))=\dist(x^k, \sub f^*(\ov)),
$$
where the last equality results from the fact that $(\sub f)^{-1}=\sub f^*$ (cf. \cite[Proposition~11.3]{rw}). Since both sets $\sub f(x^k)$ and $\sub f^*(\ov)$ are closed and convex, we find 
$v^k=P_{ \sub f(x^k)}(\ov)$ and $\tilde x^k=P_{ \sub f^*(\ov)}(x^k)$ such that $\dist (\ov, \sub f(x^k))=\|v^k-\ov\|$ and $\dist(x^k, \sub f^*(\ov))=\|x^k-\tilde x^k\|$, respectively. Set $t_k:= \|x^k-\tilde x^k\|$
and observe from the inequality above that $v^k-\ov=o(t_k)$. Passing to a subsequence, if necessary, we can assume that $\{(x^k-\tilde x^k)/t_k\b$ converges to some $u\in \X$ with $\|u\|=1$.
Since 
$$
(\tilde x^k+t_k\big((x^k-\tilde x^k)/t_k\big), \ov+t_k\big( (v^k-\ov)/t_k\big)= (x^k, v^k)\in \gph \sub f,
$$
we obtain $0\in \widetilde D(\sub f)(\ox,\ov)(u)$ via the definition of the strict graphical derivative.  
We have $\widetilde D(\sub f)(\ox,\ov)(u)=  D(\sub f)(\ox,\ov)(u)$ by strict proto-differentiability of $\sub f$  at $\ox$ for $\ov$, and therefore $0\in   D(\sub f)(\ox,\ov)(u)$. The latter amounts to 
$$
u\in D(\sub f)^{-1}(\ov,\ox)(0)=D(\sub f^*)(\ov,\ox)(0).
$$
On the other hand, strict proto-differentiability of $\sub f$  at $\ox$ for $\ov$ is equivalent to  that of $\sub f^*$ at $\ov$ for $\ox$. 
Since $f^*$ is convex, it is subdifferentially regular at $\ov$. Combining these and  Proposition~\ref{crit}
leads us to 
\begin{equation}\label{ueq1}
u\in D(\sub f^*)(\ov,\ox)(0)={K_{f^*}(\ov,\ox)}^*. 
\end{equation}
Moreover, it follows from $\tilde x^k=P_{ \sub f^*(\ov)}(x^k)$ that $(x^k-\tilde x^k)/t_k\in N_{\sub f^*(\ov)}(\tilde x^k)$, which implies via Proposition~\ref{proxcri}(b) that 
$$
u\in N_{\sub f^*(\ov)}(\ox)=K_{f^*}(\ov,\ox).
$$
This, coupled with \eqref{ueq1}, results in $u=0$, a contradiction,  which completes the proof. 
\end{proof}

Strict proto-differentiability, assumed in Theorem~\ref{msr5}, will be characterized  for ${\cal C}^2$-decomposable functions  in Section~\ref{chain}.
Below, we provide an example of a class of functions for which the assumption on the domain of the second subderivative in Theorem~\ref{msr5}  is always satisfied. 

\begin{Example} \label{spectral} 
Let $\S^n$  stand for the space of all real $n\times n$ symmetric  matrices
equipped with the inner product 
$$
\la X, Y\ra=\tr(XY),\quad X,Y\in \S^n.
$$
The induced Frobenius norm of $X\in \S^n$ is defined by $\|X\|=\sqrt{\tr(X^2)}$.
Recall that  $f:\S^n\to \oR$  is a spectral function if it is  orthogonally invariant, meaning that  for any $X \in \S^n$ and any $n\times n$ orthogonal matrix $U$, 
we have $ f (X) = f (U^\top X U )$.
 It follows from \cite[Proposition~4]{l99} that any spectral function $f$ can be expressed in the composite form 
 \begin{equation*}\label{spec}
f(X):=(\th\circ\lm)(X), \;\; X\in \S^n,
\end{equation*}
where $\th:\R^n\to  \oR$ is a permutation-invariant function on $\R^n$, called symmetric, and $\lm$ is the mapping assigning to each matrix $X\in \S^n$ the vector $\big(\lm_1(X),\ldots,\lm_n(X)\big)$
 of its eigenvalues arranged in nonincreasing order. Suppose that  the symmetric function $\th:\R^n\to \oR$  is polyhedral, meaning $\epi \th$ is a polyhedral convex set. Consider the spectral function 
 $f=\th\circ\lm$. This selection of $\th$ allows to covers important examples of eigenvalue functions such as the maximum eigenvalue function and  the sum of the first $k$ ($1\le k\le n$) largest eigenvalues of 
 a matrix. 
 It follows from \cite[Theorem~5.2.2]{bl} that $f^*=\th^*\circ\lm$. According to \cite[Theorem~11.14]{rw}, $\th^*$ is polyhedral. Appealing now to \cite[Corollary~5.8]{ms23} tells us that 
 if $(X,Y)\in \gph \sub f$, then we always have $\dom \d^2 f^*(Y,X)= K_{f^*}(Y,X)$.  
 \end{Example}

We continue  with a relationship between graphical derivative and coderivative of subgradient mappings of strictly proto-differentiable functions.

\begin{Theorem}\label{thm:gdcod}
Assume that $f:\X\to \oR$ is prox-regular and subdifferentially continuous at $\ox$ for  $\ov\in   \sub f(\ox)$ and that $\sub f$ is strictly proto-differentiable at $\ox$ for $\ov$. 
Then, one has 
\begin{equation}\label{gdcod}
D( \sub f)(\ox,\ov)(w)=D^*( \sub f)(\ox,\ov)(w)=A(w)+N_{\K}(w)\quad \mbox{for all}\;\; w\in \X,
\end{equation}
where $\K$ and $A$ are taken from Proposition~{\rm\ref{glp}(b)}. 
\end{Theorem}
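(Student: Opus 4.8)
The plan is to exploit the generalized‑linearity of $D(\partial f)(\ox,\ov)$ established in Proposition~\ref{glp}(a) together with the quadratic representation of the second subderivative from Proposition~\ref{glp}(b). First I would recall that strict proto‑differentiability implies proto‑differentiability, so by \cite[Theorem~13.40]{rw} we have $D(\partial f)(\ox,\ov) = \partial(\tfrac12\delta^2 f(\ox,\ov))$. By Proposition~\ref{glp}(b) the function $g(w):=\tfrac12\delta^2 f(\ox,\ov)(w) = \tfrac12\la A(w),w\ra + \delta_{\K}(w)$ with $\K$ a linear subspace and $A$ linear symmetric, so its subdifferential is $\partial g(w) = A(w) + N_{\K}(w)$ for $w\in\K$ (and $\emptyset$ otherwise), where $N_{\K}(w) = \K^\perp$ is constant on $\K$. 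This gives the second equality in \eqref{gdcod} once one checks that $A(w)+N_{\K}(w)$ is the right way to write it for all $w\in\X$ (the formula is understood to return $\emptyset$ when $w\notin\K$, consistent with $N_{\K}(w)=\emptyset$ there).

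Next I would handle the coderivative. By Proposition~\ref{glp}(c), $\gph\partial f$ is regular at $(\ox,\ov)$, meaning $\widehat N_{\gph\partial f}(\ox,\ov) = N_{\gph\partial f}(\ox,\ov)$; moreover strict proto‑differentiability gives $\widehat T_{\gph\partial f}(\ox,\ov) = T_{\gph\partial f}(\ox,\ov)$, and this common tangent cone is a linear subspace $L:=\gph D(\partial f)(\ox,\ov)$. For a linear subspace $L$, the regular normal cone is simply the orthogonal complement $L^\perp$. Using the definition \eqref{coder} of the coderivative, $\eta\in D^*(\partial f)(\ox,\ov)(w) \iff (\eta,-w)\in N_{\gph\partial f}(\ox,\ov) = L^\perp$. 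So I need to show that $(\eta,-w)\in L^\perp$ is equivalent to $\eta\in A(w)+N_{\K}(w)$. Since $L = \{(u,A(u)+z)\mid u\in\K,\ z\in\K^\perp\}$, a direct computation of $L^\perp$ using symmetry of $A$ shows $L^\perp = \{(p,q)\mid q\in\K,\ p + A(q)\in\K^\perp\}$ (here one uses $\la A(u),q\ra = \la u, A(q)\ra$ to move $A$ across the pairing). Setting $q=-w$, the condition becomes $w\in\K$ and $\eta - A(w)\in\K^\perp = N_{\K}(w)$, i.e. $\eta\in A(w)+N_{\K}(w)$, matching the graphical derivative formula. This simultaneously proves both equalities in \eqref{gdcod}.

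The main obstacle I anticipate is the bookkeeping around domains and the "$\emptyset$" convention: the identity $D(\partial f)(\ox,\ov)(w) = A(w)+N_{\K}(w)$ must be read as asserting $\dom D(\partial f)(\ox,\ov) = \K$ with the stated values on $\K$, and one has to be careful that the extension $A:\X\to\X$ from Proposition~\ref{glp}(b) (which is defined on all of $\X$) does not accidentally enlarge the domain — it does not, because $N_{\K}(w)=\emptyset$ for $w\notin\K$ kills those $w$ regardless of $A(w)$. A secondary point requiring care is justifying $\partial(\delta_{\K} + \tfrac12\la A\cdot,\cdot\ra) = A(\cdot) + N_{\K}(\cdot)$; since $g$ is convex (being a convex quadratic restricted to a subspace — convexity of $g$ follows from Proposition~\ref{glp}(b), or at worst one notes the representation need not have $A\succeq 0$, but the sum rule for a smooth function plus an indicator of a convex set still applies by \cite[Exercise~8.8(c)]{rw} or \cite[Proposition~10.5]{rw}), the convex subdifferential sum rule applies directly. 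Once these conventions are pinned down, the linear‑algebra identity for $L^\perp$ is the only substantive computation, and it is short.
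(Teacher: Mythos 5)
Your proposal is correct and takes essentially the same route as the paper: both derive $D(\partial f)(\ox,\ov)=A(\cdot)+N_{\K}(\cdot)$ from \cite[Theorem~13.40]{rw} together with Proposition~\ref{glp}(b), and both compute the (regular) normal cone to $\gph\partial f$ as the orthogonal complement of the tangent subspace $\{(u,A(u)+z)\mid u\in\K,\ z\in\K^\perp\}$ via Proposition~\ref{glp}(c) and the symmetry of $A$; the paper additionally invokes \cite[Theorem~13.57]{rw} for the inclusion $D(\partial f)(\ox,\ov)\subset D^*(\partial f)(\ox,\ov)$, which your two-sided computation of $L^\perp$ renders unnecessary. (Your intermediate display $\tfrac12\d^2 f(\ox,\ov)(w)=\tfrac12\la A(w),w\ra+\dd_{\K}(w)$ drops a factor of $\tfrac12$ relative to \eqref{gqf1}, but this is exactly the normalization used in the paper's own equation \eqref{dgr}, so nothing substantive is affected.)
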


\begin{proof} Strict proto-differentiability of $\sub f$   at $\ox$ for $\ov$ implies  
proto-differentiability of $\sub f$   at $\ox$ for $\ov$, which together with \cite[Theorem~13.40]{rw} illustrates that $f$ is twice epi-differentiable at $\ox$ for $\ov$. 
 By \cite[Theorem~13.57]{rw}, we get the inclusion $D( \sub f)(\ox,\ov)(w)\subset D^*( \sub f)(\ox,\ov)(w)$. To obtain the opposite inclusion, take $\eta\in D^*( \sub f)(\ox,\ov)(w)$, which means that $(\eta,-w)\in N_{\gph \sub f}(\ox,\ov)$. By Proposition~\ref{glp}(c), we have $(\eta,-w)\in \rN_{\gph \sub f}(\ox,\ov)$.
It also follows from Proposition~\ref{glp}(b) that $ \d^2 f(\ox,\ov)$ is a generalized quadratic form on $\X$ given by \eqref{gqf1}. This representation  brings us to 
\begin{equation}\label{dgr}
D( \sub f)(\ox,\ov)(w)= \sub (\sm \d^2 f(\ox,\ov))(w)= A(w)+N_{\K}(w)\quad \mbox{for all}\;\; w\in \X,
\end{equation}
where the first equality results from \cite[Theorem~13.40]{rw}. 
This clearly  confirms that 
$$
T_{\gph \sub f}(\ox,\ov)=\gph D( \sub f)(\ox,\ov)=\big\{(w,\eta)\,\big|\; 
w\in \K, \, \eta - A(w) \in \K^\perp\big\},
$$
which leads us via \cite[Corollary~11.25(d)]{rw} to 
\begin{equation*}
\rN_{\gph \sub f}(\ox,\ov)=\big(T_{\gph \sub f}(\ox,\ov)\big)^*=\big\{ 
(\xi-A(\nu), \nu)\,
\big|\;(\xi,\nu)\in 
   \K^\perp\times \K\big\}.
\end{equation*}
Since $(\eta,-w)\in \rN_{\gph \sub f}(\ox,\ov)$,  there exists $(\xi,\nu)\in \K^\perp\times \K$ such that 
$$
\xi-A(\nu)=\eta\quad \mbox{and}\quad \nu=-w.
$$
These equations imply  that $\eta-A(w)=\xi\in \K^\perp$ and $w\in \K$, which, together with the fact that $\K$ is a linear subspace of $\X$ and \eqref{dgr}, shows that $\eta\in D( \sub f)(\ox,\ov)(w)$, as desired, and completes the proof.
\end{proof}

As mentioned in the proof of Theorem~\ref{thm:gdcod}, the inclusion $D( \sub f)(\ox,\ov)(w)\subset D^*( \sub f)(\ox,\ov)(w)$ always holds under proto-differentiability of $\partial f$; see  \cite[Theorem~13.57]{rw}.
It is also not hard to see that the latter inclusion can be strict;  consider the function $f(x)=x^2/2$ for $x\ge 0$ and $f(x)=-x^2/2$ for $x\le 0$.  Thus,  equality requires to impose more assumptions. 

We recently characterized the relationship in \eqref{gdcod} via  a relative interior condition  for polyhedral 
functions in \cite[Corollary~3.7]{HJS22} and for certain  composite functions in \cite[Theorem~3.12]{HaS22}. Moreover, it was shown therein that such a 
condition is equivalent to strict   proto-differentiability of subgradient mappings under consideration. Whether or not a similar result can be obtained in the general setting of Theorem~\ref{thm:gdcod} remains as an open question.
We should also mention that it was shown in \cite[Theorem~4]{lw} that the same equality in \eqref{dgr} holds for ${\cal C}^2$-partly smooth functions in the sense of \cite[Definition~2.7]{Lew02}. 
We should add that ${\cal C}^2$-partly smooth functions are always strictly proto-differentiable and so \cite[Theorem~4]{lw} can be covered by Theorem~\ref{thm:gdcod}. The proof of 
the latter result is beyond the scope of this paper and will be appeared in our forthcoming paper \cite{HaS24}. 

We close this section by showing that our observation in \eqref{gdcod} has an interesting consequence about local minimizers of a function. To present it, we have to recall the concept of tilt-stable local minimizers of a function. 
Given a function $f:\X\to \oR$ and $\ox\in \X$ with $f(\ox)$ finite, we say that 
 $\ox$ is  a tilt-stable local minimizer of $f$
if there exist neighborhoods $U$ of $\ox$ and $V$ of $\ov=0$ such that  the optimal solution mapping 
$$
v\mapsto \argmin_{x\in U}\{ f(x)-\la v,x-\ox\ra\}
$$
is single-valued and Lipschitz continuous on $V$ and its value at $\ov$ is $\{\ox\}$.  
Tilt-stability was introduced in \cite{pr1} and has been studied extensively for various classes of constrained and composite optimization problems; see \cite{mn14,pr1}.
According to \cite[Theorem~3.2]{mn14}, tilt-stability of a local minimizer $\ox$ of $f$ yields the quadratic growth condition of $f$ at $\ox$: there exist a constant  $\ell\ge 0$ and a neighborhood $U$ of $\ox$ such that
$$
f(x)\ge f(\ox)+\frac{\ell}{2}\|x-\ox\|^2\quad \mbox{for all}\;\; x\in U.
$$
%We know from \cite[Theorem~3.2]{mn14} that  tilt-stability of a local minimizer $\ox$ of $f$ amounts to a uniform version of the above quadratic growth condition for $f$ at that point.
%Such a uniform condition is   stronger than the  the quadratic growth condition above and so one can immediately conclude that if $\ox$ is a  tilt-stable local minimizer of $f$,
% then $f$ enjoys the quadratic growth condition at $\ox$. 
Below, we are going to show that 
under strict proto-differentiability of subgradient mappings the opposite implication is also valid. 

\begin{Corollary}\label{tilt}
Assume that $f:\X\to \oR$ is prox-regular and subdifferentially continuous at $\ox$ for  $\ov:=0\in   \sub f(\ox)$ and that $\sub f$ is strictly proto-differentiable at $\ox$ for $\ov$. 
Then, the following properties are equivalent: 
\begin{itemize}[noitemsep,topsep=2pt]
\item [ \rm {(a)}] $\ox$ is a tilt-stable local minimizer of $f$; 
\item [ \rm {(b)}]  $f$ enjoys the quadratic growth condition at $\ox$;
\item [ \rm {(c)}]   for any nonzero $w\in \dom D( \sub f)(\ox,\ov)$ and any $\eta\in D( \sub f)(\ox,\ov)(w)$,  one has $\la w,\eta\ra >0$.
\end{itemize}
\end{Corollary}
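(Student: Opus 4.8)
The plan is to establish the cycle of implications (a)$\implies$(b)$\implies$(c)$\implies$(a), leveraging the generalized quadratic structure of the second subderivative furnished by Proposition~\ref{glp}(b) and the graphical-derivative formula \eqref{gdcod} of Theorem~\ref{thm:gdcod}. The implication (a)$\implies$(b) is already recorded above (it is \cite[Theorem~3.2]{mn14}), so no work is needed there. For (b)$\implies$(c), I would invoke the known second-order characterization of the quadratic growth condition for prox-regular, subdifferentially continuous functions: the quadratic growth condition at $\ox$ is equivalent to $\d^2 f(\ox,\ov)(w)>0$ for all nonzero $w\in\X$ (this uses twice epi-differentiability, which holds by Proposition~\ref{glp} via \cite[Theorem~13.40]{rw}, together with the characterization in \cite{mn14} or the Moreau-envelope argument). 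Then since $\sub f$ is proto-differentiable, $D(\sub f)(\ox,\ov)=\sub(\tfrac12\d^2 f(\ox,\ov))$, and by \eqref{gqf1} for $w\in\K$ and $\eta\in D(\sub f)(\ox,\ov)(w)=A(w)+N_{\K}(w)$ we have $\la w,\eta\ra=\la w, A(w)\ra=\d^2 f(\ox,\ov)(w)>0$ when $w\neq0$; and $\dom D(\sub f)(\ox,\ov)=\K$, so this covers all relevant $w$.

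For the crucial implication (c)$\implies$(a), I would first translate (c) into positive definiteness of the generalized quadratic form $\d^2 f(\ox,\ov)$: condition (c) says $\la A(w),w\ra=\d^2 f(\ox,\ov)(w)>0$ for every nonzero $w\in\K=\dom\d^2 f(\ox,\ov)$, while $\d^2 f(\ox,\ov)(w)=+\infty$ for $w\notin\K$; hence $\d^2 f(\ox,\ov)(w)>0$ for all $w\neq0$. By the positive-definiteness characterization of tilt stability for prox-regular subdifferentially continuous functions --- namely \cite[Theorem~1.3]{pr1} (or the graphical-derivative criterion of \cite{mn14} combined with twice epi-differentiability) --- this positivity of $\d^2 f(\ox,\ov)$ away from the origin is exactly what is needed to conclude that $\ox$ is a tilt-stable local minimizer of $f$. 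Here one uses that $\ov=0\in\sub f(\ox)$, so $\ox$ is a critical point, and that strict proto-differentiability (hence proto-differentiability) guarantees the second subderivative governs the local behavior through \cite[Theorem~13.40]{rw}.

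Alternatively, and perhaps more self-containedly, I would route (c)$\implies$(a) through strong metric regularity: by Theorem~\ref{thm:gdcod}, $D(\sub f)(\ox,\ov)=D^*(\sub f)(\ox,\ov)$ is the single-valued-friendly map $w\mapsto A(w)+N_{\K}(w)$, and condition (c) is the positive-definiteness of this linear-plus-normal-cone map on $\K$; this positive definiteness makes $D(\sub f)(\ox,\ov)^{-1}$ single-valued (indeed Lipschitz), which via the coderivative criterion for strong metric regularity (Mordukhovich's criterion applied to $\sub f$, using $D^*=D$) yields strong metric regularity of $\sub f$ at $\ox$ for $0$. Combined with $0\in\sub f(\ox)$ and prox-regularity, the tilt-stability characterization of \cite[Theorem~3.5]{mn14} --- tilt stability of $\ox$ is equivalent to strong metric regularity of $\sub f$ at $(\ox,0)$ --- delivers (a).

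The main obstacle I anticipate is pinning down the precise second-order criterion for tilt stability in the prox-regular, subdifferentially continuous (but possibly non-amenable) setting and citing it correctly: the cleanest statement ``$\ox$ tilt-stable $\iff$ $\d^2 f(\ox,\ov)$ positive definite away from $0$'' requires twice epi-differentiability, which here is supplied by strict proto-differentiability, so the argument should go through, but the bookkeeping of which hypothesis feeds which cited theorem (\cite{pr1} versus \cite{mn14}) is the delicate point. Everything else --- the computation $\la w,\eta\ra=\d^2 f(\ox,\ov)(w)$ from \eqref{gdcod}, the identification $\dom D(\sub f)(\ox,\ov)=\K$, and the equivalence of quadratic growth with positivity of the second subderivative --- is routine given the machinery already assembled in this section.
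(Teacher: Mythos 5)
Your proposal is correct and rests on exactly the same ingredients as the paper's proof: the Poliquin--Rockafellar coderivative criterion for tilt stability \cite[Theorem~1.3]{pr1} converted to the graphical derivative via the identity $D(\sub f)(\ox,\ov)=D^*(\sub f)(\ox,\ov)$ of Theorem~\ref{thm:gdcod}, together with the graphical-derivative characterization of quadratic growth (the paper simply cites \cite[Theorem~3.7]{chnt} for (b)$\iff$(c), whereas you re-derive that link through positivity of $\d^2 f(\ox,\ov)$ and the representation $A(w)+N_{\K}(w)$, which amounts to the same thing). The only cosmetic difference is that you arrange the argument as a cycle (a)$\implies$(b)$\implies$(c)$\implies$(a) rather than two separate equivalences, and your worry about the bookkeeping between $D$ and $D^*$ is resolved precisely by Theorem~\ref{thm:gdcod}, as in the paper.
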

\begin{proof} 
Strict proto-differentiability of $\sub f$   at $\ox$ for $\ov$ clearly implies 
proto-differentiability of $\sub f$   at $\ox$ for $\ov$, which together with \cite[Theorem~13.40]{rw} illustrates that $f$ is twice epi-differentiable at $\ox$ for $\ov$. 
The equivalence of (b) and (c) was established in \cite[Theorem~3.7]{chnt}. It follows from \cite[Theorem~1.3]{pr1} that (a) is equivalent  to the condition 
$\la w,\eta\ra >0$ for any $w\in \big(\dom D^*( \sub f)(\ox,\ov)\big)\setminus \{0\}$ and any $\eta\in D^*( \sub f)(\ox,\ov)(w)$. Combining this with Theorem~\ref{thm:gdcod}
tells us that (a) and (c) are equivalent, which completes the proof. 
\end{proof}

Using a different approach, Lewis and Zhang demonstrated in \cite[Theorem~6.3]{LeZ13} the equivalence of the properties in Corollary~\ref{tilt}(a) and (b) for ${\cal C}^2$-partly smooth functions. As pointed out earlier, 
the latter functions  are  strictly proto-differentiable, which allows us to obtain their result using our established theory in this section.

%%%%%%%%%%%%%%%%%%%%%%%%%%%%%%%%

\section{Regularity Properties of Generalized Equations}\label{sec04}

In this section, we aim to study important stability properties of solution mappings to a class of generalized equations under the strict proto-differentiability assumption. As Section~\ref{sec3} may suggest, we should expect 
stronger properties to be satisfied under the latter assumption. We begin with an important question about the relationship between two important stability properties of solution mappings of generalized equations, namely metric regularity and strong metric regularity. 
Given a differentiable mapping $\psi:\X \to \X$ and a proper function $f:\X\to\oR$, we mainly focus on the generalized equation 
\begin{equation}\label{ge}
0\in \psi(x)+\partial f(x),
\end{equation}
to which we associate  the set-valued mapping $G:\X \tto \X$, defined  by
\begin{equation}\label{G}
G(x) = \psi(x) +\partial f(x), \quad x\in \X,
\end{equation}
and the  solution mapping $S: \X \tto \X$, defined  by 
\begin{equation}\label{sol}
S(y) := G^{-1}(y) = \big\{ x\in \X\, \big|\, y\in \psi(x) +\partial f(x)\big\}, \quad y\in \X.
\end{equation}
We will assume further that the function $f$ in \eqref{ge} is prox-regular at the point under consideration. 
Recall that a set-valued mapping $F:\X \tto \Y$ is said to be {\em metrically regular} at $\ox$ for $\oy\in F(\ox)$ if there exist a positive constant $\kappa$  and  neighborhoods $U$ of $\ox$ and $V$ of $\oy$ such that the estimate
\begin{equation}\label{mr8}
\dist \big(x, F^{-1}(y)\big)\leq \kappa\, \dist\big(y, F(x)\big)
\end{equation}
holds for all $(x, y)\in U\times V$. When the estimate \eqref{mr8} holds for any $(x,y)\in \X \times \Y$, we say that $F$ is {\em globally}  metrically regular. 
The mapping is said to be {\em strongly metrically regular} at $\ox$ for $\oy$ if $F^{-1}$ admits a Lipschitz continuous single-valued localization around $\oy$ for $\ox$, which means that there exist neighborhoods $U$ of $\ox$ and $V$ of $\oy$ such that the mapping $y\mapsto F^{-1}(y)\cap U$ is single-valued and Lipschitz continuous on $V$. According to \cite[Proposition~3G.1]{DoR14}, strong metric regularity of $F$ at $\ox$ for $\oy$ amounts to $F$ being metrically regular at $\ox$ for $\oy$ and the inverse mapping $F^{-1}$ admitting a single-valued localization around $\oy$ for $\ox$.

When $f=\dd_C$ in \eqref{ge} with $C$ being a polyhedral convex set, the generalized 
equation \eqref{ge} presents an example of  variational inequalities. In this case, the seminal paper \cite{dr96} revealed for the first time that strong metric regularity 
and metric regularity of the solution mapping $S$ in \eqref{sol}  are equivalent. In this section, we are going to justify the same equivalence 
for the generalized equation in \eqref{ge} when $\sub f$ is   strictly proto-differentiable.  

While metric regularity of the mapping $G$ from \eqref{G}  can be studied using the coderivative criterion (cf.  \cite[Theorem~3.3(ii)]{Mor18}) in general, we show below that 
in the presence of   strict proto-differetiability, such a characterization has   simpler forms. 

\begin{Proposition}[point-based criteria for metric regularity]\label{thm:MR}
Assume that $\ox$ is a solution to the generalized equation in \eqref{ge} in which $\psi$ is strictly differentiable at $\ox$ and  $f$ is both prox-regular and subdifferentially continuous at $\ox$ for $\ov:=-\psi(\ox)$ and 
that $\partial f$ is strictly proto-differentiable at the point $\ox$ for  $\ov$. Take the linear subspace $\K$ and the linear operator $A$
from \eqref{gdcod}. Then the following properties are equivalent:
\begin{enumerate}[noitemsep,topsep=2pt]
\item  the mapping $G$ from \eqref{G} is metrically regular at $\ox$ for $0$;
\item $\big\{w\in \X\, \big|\, (\nabla \psi(\ox)+A)^*(w)\in \K^{\perp}\big\}\cap \K=\{0\}$;
\item $(\nabla\psi(\ox)+A)(\K) +\K^\perp = \X$;
\item $|DG(\ox, 0)^{-1}|^-$ is finite, where the inner norm $|DG(\ox, 0)^{-1}|^-$  is defined by
\begin{equation*}\label{mr5}
|DG(\ox, 0)^{-1}|^- := \sup_{\|u\|\leq 1}\inf_{w\in DG(\ox, 0)^{-1}(u)}\|w\|
\end{equation*}
with   convention $\inf_{w\in \emp}\|w\|=\infty$;
\item $DG(\ox, 0) $ is surjective, meaning that 
$$
\rge DG(\ox, 0):=\big\{u\in \X|\; \exists \,  w\in \X\;\,\mbox{with}\;\; u\in DG(\ox, 0)(w)\big\}=\X;
$$
\item $DG(\ox, 0)$ is globally metrically regular.
\end{enumerate}
\end{Proposition}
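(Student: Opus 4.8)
The plan is to prove a chain of equivalences among the six stated conditions, using the structural identity $D(\partial f)(\ox,\ov)(w)=A(w)+N_{\K}(w)$ from Theorem~\ref{thm:gdcod}, together with the fact that (by strict proto-differentiability) $D(\partial f)(\ox,\ov)=D^*(\partial f)(\ox,\ov)$. The first step is to compute $DG(\ox,0)$ explicitly. Since $\psi$ is strictly differentiable at $\ox$, its graphical derivative is the linear map $\nabla\psi(\ox)$, and by the sum rule for graphical derivatives of a smooth plus a set-valued map (cf. \cite[Exercise~10.43]{rw}) we obtain
\begin{equation*}
DG(\ox,0)(w)=\nabla\psi(\ox)(w)+D(\partial f)(\ox,\ov)(w)=(\nabla\psi(\ox)+A)(w)+N_{\K}(w),\quad w\in \X.
\end{equation*}
Writing $B:=\nabla\psi(\ox)+A$, the graph of $DG(\ox,0)$ is exactly $\{(w,\eta)\mid w\in\K,\ \eta-B(w)\in\K^\perp\}$, which is a \emph{linear subspace} of $\X\times\X$.

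Next I would establish the key point-based equivalences. Condition (e), surjectivity of $DG(\ox,0)$, says $\rge DG(\ox,0)=B(\K)+\K^\perp=\X$, which is literally condition (c). The equivalence of (c) and (b) is a standard linear-algebra duality: $B(\K)+\K^\perp=\X$ iff its orthogonal complement is $\{0\}$, and since $(B(\K)+\K^\perp)^\perp = B(\K)^\perp\cap(\K^\perp)^\perp = \{w\mid B^*(w)\in\K^\perp\}\cap\K$ (using $B(\K)^\perp=\{w\mid \langle B(\K),w\rangle=0\}=\{w\mid B^*(w)\perp\K\}$, equivalently $B^*(w)\in\K^\perp$ since $\K$ is a subspace), we get exactly (b). For (d)$\iff$(e): in general $|DG(\ox,0)^{-1}|^-$ finite is equivalent to $DG(\ox,0)^{-1}$ having domain all of $\X$, i.e.\ $\rge DG(\ox,0)=\X$; here finiteness is automatic on the range because $DG(\ox,0)$ is a ``linear-subspace'' relation, so the inner norm is finite iff the range is everything. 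For (e)$\iff$(f): since $\gph DG(\ox,0)$ is a linear subspace, $DG(\ox,0)$ is a polyhedral (indeed linear) set-valued map, and for such maps global metric regularity is equivalent to surjectivity — this is the classical Robinson–Ursescu / Hoffman-type fact that a linear relation is (globally) metrically regular iff it is surjective.

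The substantive step is (a)$\iff$(e), i.e.\ metric regularity of the nonlinear map $G$ at $\ox$ for $0$ is equivalent to surjectivity of its graphical derivative. The plan is to invoke the Mordukhovich coderivative criterion for metric regularity: $G$ is metrically regular at $\ox$ for $0$ iff $D^*G(\ox,0)(0)=\{0\}$, equivalently $D^*G(\ox,0)^{-1}$ is bounded. Using the coderivative sum rule with a strictly differentiable summand, $D^*G(\ox,0)(y)=\nabla\psi(\ox)^*(y)+D^*(\partial f)(\ox,\ov)(y)$. By Proposition~\ref{glp}(c), $\gph\partial f$ is regular at $(\ox,\ov)$, so $D^*(\partial f)(\ox,\ov)=\widehat D^*(\partial f)(\ox,\ov)$, and by Theorem~\ref{thm:gdcod} the coderivative equals $A(\cdot)+N_{\K}(\cdot)$. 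Thus $D^*G(\ox,0)(y)=B(y)+\K^\perp$ whenever $y\in\K$ and is empty otherwise, so $0\in D^*G(\ox,0)(y)$ forces $y\in\K$ and $B(y)\in\K^\perp$, i.e.\ $y$ lies in the set of condition (b); hence $D^*G(\ox,0)(0)=\{0\}$ is exactly condition (b), which we already showed equals (c)=(e). (Alternatively, one can bypass coderivatives and argue directly: by strict proto-differentiability, $D(\partial f)=\widetilde D(\partial f)$, so $DG(\ox,0)$ coincides with the strict graphical derivative $\widetilde DG(\ox,0)$, and a theorem characterizing metric regularity via the outer/strict graphical derivative — for instance the Bonnans--Shapiro / Dontchev--Rockafellar type criterion $|\widetilde DG(\ox,0)^{-1}|^-<\infty$ — yields (a)$\iff$(d) at once.)

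The main obstacle I anticipate is the (a)$\iff$(e) implication: in full generality metric regularity is governed by the coderivative, not the graphical derivative, and the two can disagree. The resolution hinges entirely on the extra structure supplied by strict proto-differentiability — namely that $D(\partial f)(\ox,\ov)=D^*(\partial f)(\ox,\ov)$ (both equal to $A+N_{\K}$) via Theorem~\ref{thm:gdcod}, and that $\gph\partial f$ is regular at $(\ox,\ov)$ via Proposition~\ref{glp}(c), together with subdifferential continuity guaranteeing the relevant calculus (sum) rules for $\partial f$ plus a strictly differentiable term hold as equalities without qualification conditions. Once one accepts that under these hypotheses the graphical derivative and coderivative of $G$ coincide and both are the linear relation $B+N_{\K}$, the point-based criteria (b)–(f) collapse to one linear-algebra condition and the equivalence with (a) follows from the coderivative criterion. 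The remaining care needed is purely bookkeeping: confirming the sum rules apply (using strict differentiability of $\psi$ and prox-regularity/subdifferential continuity of $f$), and noting that positive homogeneity and the subspace property make the various ``norm finiteness'' conditions equivalent to surjectivity.
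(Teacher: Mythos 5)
Your proposal is correct and follows essentially the same route as the paper: the coderivative sum rule plus the formula $D^*(\partial f)(\ox,\ov)=A+N_{\K}$ from Theorem~\ref{thm:gdcod} and the Mordukhovich criterion give (a)$\iff$(b), orthogonal complementation gives (b)$\iff$(c), the graphical-derivative sum rule gives (c)$\iff$(e), and the closed-convex-graph/linear-relation facts give (d)$\iff$(e)$\iff$(f). The only blemishes are notational: the Mordukhovich criterion concerns $D^*G(\ox,0)^{-1}(0)=\{0\}$ rather than $D^*G(\ox,0)(0)=\{0\}$, and in the kernel computation you should write $B^*(y)\in\K^\perp$ (with $B=\nabla\psi(\ox)+A$) rather than $B(y)\in\K^\perp$, which is what condition (b) actually states and what your argument in fact uses.
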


\begin{proof} To prove the equivalence of (a) and (b), we begin with calculating $D^*G(\ox, 0)$. 
Applying the  sum rule for the coderivative from \cite[Exercise~10.43(b)]{rw} to  $G = \psi +\partial f$ and using  the formula for $D^*(\partial f)(\ox, \ov)$ in \eqref{gdcod}, we obtain for all $w\in \X$ that
\begin{equation}\label{cosum}
D^*G(\ox, 0)(w) = \nabla \psi(\ox)^*(w) + D^*(\partial f)(\ox, \ov)(w)
%=\nabla \psi(\ox)^*(w) + D(\partial f)(\ox, \ov)(w)
= (\nabla \psi(\ox) + A)^*(w) +N_\K(w).
\end{equation}
Since $\K$   is a linear subspace of $\X$, we then deduce from the above calculation that
\begin{equation*}
D^*G(\ox, 0)^{-1}(0) :=  \big\{w\in \X\, \big|\, 0\in D^*G(\ox, 0)(w)\big\}= \big\{w\in \X\, \big|\, (\nabla \psi(\ox) + A)^*(w) \in \K^\bot\big\}\cap \K.
\end{equation*}
By  \cite[Theorem~3.3(ii)]{Mor18}, metric regularity of $G$ at $\ox$ for $0$ amounts to  $D^*G(\ox, 0)^{-1}(0)=\{0\}$. 
Combining it with the discussion above yields the equivalence of (a) and (b). 

Turing to the equivalence of (b) and (c),  observe first  via \cite[Corollary~11.25(c)]{rw} that
\begin{equation*}
\big\{w\in \X\, \big|\, (\nabla \psi(\ox)+A)^*(w)\in \K^{\perp}\big\}^\bot =(\nabla\psi(\ox)+A)(\K).
\end{equation*}
Taking the orthogonal complements of both sides of either (b) or (c) and using the above equality tell us  that (b) and (c) are equivalent. 

To prove the equivalence of (c) and (e),  we proceed with calculating $DG(\ox, 0)$. Since $\psi$ is differentiable at $\ox$,  the  sum rule for the graphical derivative from \cite[Exercise~10.43(a)]{rw},  together with \eqref{gdcod}, gives us
\begin{equation}\label{mr2.3}
DG(\ox, 0)(w) = \nabla \psi(\ox)(w) + D(\partial f)(\ox, \ov)(w) = (\nabla \psi(\ox) + A)(w) + N_\K(w), \quad w\in \X.
\end{equation}
Thus, we get    
\begin{equation}\label{mr2.1}
\rge DG(\ox, 0) = (\nabla\psi(\ox)+A)(\K ) +\K^\perp,
\end{equation}
which clearly implies that  $\rge DG(\ox, 0)=\X$  if and only if (c) holds. This verifies the equivalence of (c) and (e). 

According to \eqref{mr2.3}, $\gph DG(\ox, 0)$ is closed and convex due to  $\K$ being a linear subspace of $\X$. It follows then from  
 \cite[Proposition~4A.6]{DoR14}  that $|DG(\ox, 0)^{-1}|^-<\infty$  amounts to $DG(\ox, 0)$ being surjective, which proves the equivalence of (d) and (e). 
Finally, observe that  (e)  amounts to the condition $0\in \inte\big( \rge DG(\ox,0)\big)$ due to the fact that 
$\rge DG(\ox,0)$ is indeed a linear subspace of $\X$; see \eqref{mr2.1}. This, combined with $0\in DG(\ox,0)(0)$ and \cite[Theorem~5B.4]{DoR14},
implies that (e) is equivalent to  the mapping  $DG(\ox,0)$ being metrically regular at $0\in \X$ for $0\in \X$. According to \cite[Theorem~5.9(a)]{io}, the latter amounts to (f),
confirming that (e) and (f) are equivalent. This completes the proof.
\end{proof}

To proceed, we recall the following   sufficient conditions for strong metric regularity of a set-valued mapping, taken from   \cite[Theorem~4D.1]{DoR14}:  a set-valued mapping   $F:\X \tto \Y$  is strongly metrically regular at $\ox$ for $\oy\in F(\ox)$ provided that its graph is locally closed at $(\ox, \oy)$  and 
the conditions 
 \begin{equation}\label{mr9}
0\in \widetilde D F(\ox, \oy)(w)\; \Longrightarrow\; w=0
\end{equation}
 and 
\begin{equation}\label{mr7}
\ox \in \liminf_{y\to \oy} F^{-1}(y)
\end{equation}
hold.

 \begin{Theorem}\label{smrch}  
Assume that $\ox$ is a solution to the generalized equation in \eqref{ge} in which $\psi$ is strictly differentiable at $\ox$ and  $f$ is both prox-regular and subdifferentially continuous at $\ox$ for $\ov:=-\psi(\ox)$.  
 Then  the following properties are equivalent:
 \begin{enumerate}[noitemsep,topsep=2pt]
\item the mapping $G$, taken from \eqref{G}, is metrically regular at $\ox$ for $0$ and  $\partial f$ is strictly proto-differentiable at  $\ox$ for  $\ov$;
\item the solution mapping $S$ from \eqref{sol} has a Lipschitz continuous single-valued localization $s$ around $0\in \X$ for $\ox$, which  is strictly differentiable at  $0$. 
\end{enumerate} 
Moreover, if {\rm(}a{\rm)} holds, then the derivative of $s$ at $0$ can be calculated by   
\begin{equation}\label{derS}
\nabla s(0) = \big(P_\K\circ(\nabla\psi(\ox) + A)\vert_\K\big)^{-1}\circ P_\K,
\end{equation}
where the linear mapping $A$  and the linear subspace $\K$ are taken from  \eqref{gdcod},  where $P_\K:\X \to\X$ is the   projection mapping onto  $\K$, and 
where $(\nabla\psi(\ox) + A)\vert_\K$ stands for the restriction of the linear mapping $\nabla\psi(\ox)+A$ to $\K$.
\end{Theorem}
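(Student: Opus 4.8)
The plan is to prove the equivalence (a)$\Leftrightarrow$(b) by exploiting the two characterizations already in hand: the point-based criteria for metric regularity in Proposition~\ref{thm:MR}, and the sufficient condition for strong metric regularity recalled in \eqref{mr9}--\eqref{mr7}. For the implication (a)$\Rightarrow$(b), I would first note that metric regularity of $G$ at $\ox$ for $0$ is equivalent to all the conditions (b)--(f) of Proposition~\ref{thm:MR}, so in particular $\rge DG(\ox,0)=\X$ and, crucially, $D^*G(\ox,0)^{-1}(0)=\{0\}$. Under strict proto-differentiability we have $\widetilde D(\partial f)(\ox,\ov)=D(\partial f)(\ox,\ov)$, hence $\widetilde DG(\ox,0)=DG(\ox,0)$; combining this with the equality $\widetilde T_{\gph\partial f}(\ox,\ov)=T_{\gph\partial f}(\ox,\ov)$ and the fact (from Proposition~\ref{glp}(a),(c) applied to $\partial f$, noting the sum with the smooth $\psi$ only shifts the graph by a diffeomorphism) that $\gph DG(\ox,0)$ is a linear subspace, I get that $0\in\widetilde DG(\ox,0)(w)$ forces $w\in DG(\ox,0)^{-1}(0)$, which by the linearity of the graph is the orthogonal complement of $\rge D^*G(\ox,0)$... more directly, $0\in DG(\ox,0)(w)$ with $DG(\ox,0)(w)=(\nabla\psi(\ox)+A)(w)+N_\K(w)$ means $w\in\K$ and $(\nabla\psi(\ox)+A)(w)\in\K^\perp$, i.e. $w\in D^*G(\ox,0)^{-1}(0)=\{0\}$ by Proposition~\ref{thm:MR}(b) (here using symmetry of $A$ and self-duality of $\K$ to identify the primal and dual kernels). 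So \eqref{mr9} holds for $F=G$. For \eqref{mr7}, $\ox\in\liminf_{y\to 0}S(y)$ follows from metric regularity itself (the estimate \eqref{mr8} gives $\dist(\ox,S(y))\le\kappa\,\dist(y,G(\ox))\le\kappa\|y\|\to 0$). Hence $G$ is strongly metrically regular at $\ox$ for $0$, which is exactly the statement that $S$ has a Lipschitz single-valued localization $s$ around $0$ for $\ox$. Strict differentiability of $s$ at $0$ should then follow by differentiating through the single-valued localization: using that $\gph DG(\ox,0)$ is a linear subspace and that $DG(\ox,0)$ is surjective with trivial ``kernel in $\K$'', the localization $s$ is strictly differentiable with derivative equal to a suitable single-valued branch of $DG(\ox,0)^{-1}$, computed explicitly below.

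For the reverse implication (b)$\Rightarrow$(a), if $S$ has a Lipschitz single-valued localization around $0$ for $\ox$, then $G$ is strongly metrically regular, hence metrically regular, at $\ox$ for $0$. It remains to deduce strict proto-differentiability of $\partial f$ at $\ox$ for $\ov$ from the additional hypothesis that $s$ is strictly differentiable at $0$. The idea is to pull strict differentiability of $s$ back to a statement about $\gph\partial f$ near $(\ox,\ov)$. Since $\psi$ is strictly differentiable at $\ox$, the map $(x,v)\mapsto(x,\,v+\psi(x))$ is a $\mathcal{C}^1$-diffeomorphism (in the strict sense at $\ox$) carrying $\gph\partial f$ onto $\gph G$ near $(\ox,0)$; strict proto-differentiability is invariant under such diffeomorphisms (as used in the proof of Theorem~\ref{strict}). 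So it suffices to show $\gph G=\gph S^{-1}$ is strictly smooth at $(\ox,0)$. But $\gph S^{-1}$ near $(\ox,0)$ is exactly $\{(s(y),y)\}$ with $s$ single-valued Lipschitz and strictly differentiable at $0$; by \cite[Proposition~3.1(b)]{r85} the graph of a strictly differentiable single-valued map is strictly smooth at the point in question, and the same holds for the graph of its inverse since interchanging coordinates is a linear isomorphism. This yields strict proto-differentiability of $\partial f$ at $\ox$ for $\ov$, completing the equivalence.

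Finally, for the formula \eqref{derS}: once we know $s$ is a strictly differentiable single-valued localization of $S$, differentiate the relation $y\in G(s(y))$ at $y=0$. By the graphical-derivative sum rule, $DG(\ox,0)(w)=(\nabla\psi(\ox)+A)(w)+N_\K(w)$, and $\nabla s(0)$ must be the single-valued linear selection $T$ of $DG(\ox,0)^{-1}$ characterized by: for each $y$, $w=T(y)$ is the unique $w\in\K$ with $y-(\nabla\psi(\ox)+A)(w)\in\K^\perp=N_\K(w)$, i.e. $P_\K\big(y-(\nabla\psi(\ox)+A)(w)\big)=0$, i.e. $P_\K\big((\nabla\psi(\ox)+A)(w)\big)=P_\K(y)$. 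Since $w\in\K$, this reads $\big(P_\K\circ(\nabla\psi(\ox)+A)\vert_\K\big)(w)=P_\K(y)$; the operator $P_\K\circ(\nabla\psi(\ox)+A)\vert_\K$ on $\K$ is injective (its kernel is the set in Proposition~\ref{thm:MR}(b), which is $\{0\}$ under metric regularity) and hence invertible on $\K$, giving $\nabla s(0)=\big(P_\K\circ(\nabla\psi(\ox)+A)\vert_\K\big)^{-1}\circ P_\K$, which is \eqref{derS}.

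The main obstacle I anticipate is the ``strict differentiability'' bookkeeping rather than the set-theoretic core: showing in (a)$\Rightarrow$(b) that the Lipschitz localization $s$ produced by the strong-metric-regularity criterion is genuinely \emph{strictly} differentiable at $0$ (not merely differentiable), and dually in (b)$\Rightarrow$(a) that strict differentiability of $s$ is strong enough to force strict smoothness of $\gph G$ — both hinge on the linearity of $\gph DG(\ox,0)$ together with a careful use of \cite[Proposition~3.1(b)]{r85} and the invariance of strict proto-differentiability under $\mathcal{C}^1$-diffeomorphisms. Everything else reduces to already-established identities: the coderivative/graphical-derivative formulas \eqref{cosum}, \eqref{mr2.3}, the kernel characterization in Proposition~\ref{thm:MR}(b), and the standard fact that metric regularity delivers \eqref{mr7}.
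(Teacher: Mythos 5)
Your proposal follows essentially the same route as the paper: verify the strong metric regularity criteria \eqref{mr9}--\eqref{mr7} for $G$ using the graphical-derivative formula \eqref{mr2.3} and Proposition~\ref{thm:MR}, transfer strict proto-differentiability between $\gph\sub f$, $\gph G$, $\gph S$, and $\gph s$ via the diffeomorphism $(x,v)\mapsto(x,v+\psi(x))$ and \cite[Proposition~3.1]{r85}, and obtain \eqref{derS} by solving $P_\K(y)=H(w)$ with $H=P_\K\circ(\nabla\psi(\ox)+A)\vert_\K$. The one step whose stated justification does not hold up is your identification of the ``primal kernel'' $\{w\in\K\,|\,(\nabla\psi(\ox)+A)(w)\in\K^\perp\}$ with the set in Proposition~\ref{thm:MR}(b), which involves the adjoint $(\nabla\psi(\ox)+A)^*$: symmetry of $A$ does not help here because $\nabla\psi(\ox)$ is not assumed symmetric, so these two sets are in general different. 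The conclusion you need (triviality of the primal kernel) is still correct, but it must be derived from condition (c) of Proposition~\ref{thm:MR} rather than (b): since $(\nabla\psi(\ox)+A)(\K)+\K^\perp=\X$, the operator $H:\K\to\K$ is surjective, hence injective by finite-dimensionality (the paper runs this as a dimension count showing $(\nabla\psi(\ox)+A)(\K)\cap\K^\perp=\{0\}$); equivalently, $\ker H$ and $\ker H^*$ are kernels of mutually adjoint operators on $\K$, so one is trivial iff the other is, but they are not literally equal. The same slip recurs in your derivation of \eqref{derS}. Apart from this, your (b)$\Rightarrow$(a) argument is exactly the paper's (the paper invokes a sum rule for strict proto-differentiability from \cite{HJS22} where you invoke diffeomorphism invariance, which amounts to the same thing), and your forward-direction claim that $s$ is strictly differentiable, though stated loosely, rests on the same mechanism and goes through.
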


\begin{proof} 
Suppose that (a) holds. We begin by proving that $G$ is strongly metrically regular at $\ox$ for $0$
by validating \eqref{mr9} and \eqref{mr7}. To this end, it can be seen via the definition of $G$ in \eqref{G} that the graph of $G$ is closed. 
Moreover, the distance estimate in \eqref{mr8}, adopted for metric regularity of $G$ at $\ox$ for $0$, clearly yields \eqref{mr7}. It remains to justify \eqref{mr9}.  Making use of strict proto-differentiability of $\partial f$ at $\ox$ for $\ov$, we conclude via \cite[Proposition~5.1]{HJS22} that $G$ is strictly proto-differetiable at $\ox$ for $0$, which results in  $\widetilde D G(\ox, 0)(w)=DG(\ox, 0)(w)$
for any $w\in \X$. 
Also, we know that $D G(\ox, 0)$ enjoys the representation in \eqref{mr2.3}. Moreover, we deduce from metric regularity of $G$ at $\ox$ for $0$ and Proposition~\ref{thm:MR}(c) that
the condition 
\begin{equation}\label{mr2.5}
(\nabla\psi(\ox)+A)(\K) +\K^\perp = \X
\end{equation}
is satisfied.
Set $\widehat K := (\nabla\psi(\ox)+A)(\K)$. Obviously, we have $\dim \widehat K\leq \dim \K$.
We now show that $\widehat K\cap\K^\perp = \{0\}$.  If not, then $\dim (\widehat K\cap\K^\perp)$ would be nonzero and \eqref{mr2.5} would yield
\begin{align*}
\dim \X &=  \dim\widehat K+ \dim \K^\perp -\dim \widehat K\cap\K^\perp\\
&<  \dim\K+ \dim \K^\perp =\dim \X,
\end{align*}
a contradiction.  
Thus, we obtain that $\widehat K\cap\K^\perp = \{0\}$ which, together with \eqref{mr2.5}, implies that $\dim \widehat K = \dim \K$ and that the restriction of $\nabla\psi(\ox)+A$ to $\K$, denoted $(\nabla\psi(\ox)+A)\vert_\K$, is an one-to-one linear mapping from $\K$ to $\widehat K$.
Set $H:=P_\K\circ(\nabla\psi(\ox)+A)\vert_\K$.  Clearly, $H$ is also an one-to-one linear operator from $\K$ into $\K$ itself.
Turning to the proof  the implication in \eqref{mr9} for $G$, suppose that $0\in \widetilde D G(\ox, 0)(w) = DG(\ox, 0)(w)$. We then get from \eqref{mr2.3} that $w\in \K$ and $-(\nabla\psi(\ox)+A) (w)\in \K^\bot$.
 These indicate that 
\begin{equation*}
(\nabla\psi(\ox)+A) (w) \in \widehat K\cap \K^\bot =  \{0\},
\end{equation*} 
which in turn leads us to   $H( w)=P_\K(0)=0$.
Since $H$ is a bijection,  we get $w=0$, proving  \eqref{mr9} for $G$, which confirms that $G$ is strongly metrically regular at $\ox$ for $0$. 
This amounts to saying that  $S = G^{-1}$ has a Lipschitz continuous single-valued localization  around $0$ for $\ox$.
So, we  find neighborhoods $U$ of $\ox$ and $V$ of $0$ such that the mapping $y\mapsto S(y)\cap U$ is single-valued and Lipschitz continuous on $V$. Define the mapping $s:V\to U$ by $s(y)=S(y)\cap U$ for  $y\in V$. We then conclude that 
\begin{equation}\label{local}
\gph s = \gph S\cap (V\times U),
\end{equation}
 which clearly yields $T_{\gph s}(0, \ox) = T_{\gph S}(0, \ox)$. Recall that $G$ is strictly proto-differentiable at $\ox$ for $0$. It is worth mentioning that strict proto-differentiability is a geometric property of the graph of the mapping and that  $\gph G^{-1}$ and $\gph s$ are the same around $(0,\ox)$. 
Thus, strict proto-differentiability of $G^{-1}$   at $0$ for $\ox$ yields that of    $s$  at $0$ for $\ox$. 
By Definition~\ref{sss}(b),  $\gph s$ is strictly smooth at $(0, \ox)$. Since $s$ is Lipschitz continuous on $V$, we deduce from \cite[Proposition~3.1]{r85} that $s$ is  strictly differentiable at $0$.
To justify the formula for $\nabla s(0)$ in \eqref{derS}, take  $u\in \X$. We conclude from differentibility of $s$ at $0$ that $DG^{-1}(0,\ox)(u)=Ds(0)(u) = \nabla s(0)(u)$, which in turn yields $u\in DG(\ox, 0)(\nabla s(0)(u))$. Appealing now to \eqref{mr2.3}, we obtain  
\begin{equation*}\label{mr2.4}
\nabla s (0)(u) \in \K \quad \textrm{ and }\quad u-(\nabla\psi(\ox) +A)(\nabla s(0)(u))\in \K^\bot.
\end{equation*} 
Projecting the left-hand side of the second relation and using the first one yield
$$
P_\K(u) = P_\K\big((\nabla\psi(\ox) +A)(\nabla s(0)(u))\big)=H(\nabla s(0)(u)).
$$
Recalling that $H:\K\to\K$ is one-to-one, we arrive at
\begin{equation*}
\nabla s(0)(u) = H^{-1}(P_\K(u))
=  \big(P_\K\circ(\nabla\psi(\ox) + A)\vert_\K\big)^{-1} (P_\K(u)).
\end{equation*}
This confirms  \eqref{derS}  and hence completes the proof of  the implication (a)$\implies$(b). To prove the opposite implication, assume that (b) is satisfied. 
Thus, $G$ is strongly metrically regular at $\ox$ for $0$, which clearly shows that it is metrically regular at $\ox$ for $0$. Moreover, 
we find  neighborhoods $U$ of $\ox$ and $V$ of $0$ for which \eqref{local} holds. Since $s$ is strictly differentiable and Lipschitz continuous around $0$, we 
deduce from \cite[Proposition~3.1]{r85} that $s$ is strictly proto-differentiable at $0$ for $\ox$. It follows   from  \eqref{local} that the solution mapping $S$   is strictly proto-differentiable at $0$ for $\ox$.
Since $S=G^{-1}$, we conclude that $G$   is strictly proto-differentiable at $\ox$ for $0$. Using the sum rule for strict proto-differentiability in  \cite[Proposition~5.3]{HJS22} and the fact that 
$G=\psi+\sub f$ tells us that $\sub f$ is strictly proto-differentiable at $\ox$ for $-\psi(\ox)$, which proves (a) and completes the proof.
\end{proof}

One question that may arise is how one can ensure continuous differentiability (${\cal C}^1$) of the localization $s$ in  Theorem~\ref{smrch}(b), a property that is useful in dealing with a number of applications including 
 the one discussed at the end of this section. A close look into the proof of Theorem~\ref{smrch} tells us that if we assume strict proto-differentiability of $\sub f$ at $x$ for $v$ for any pair $(x,v)\in \gph \sub f$
 close to $(\ox,-\psi(\ox))$ and if $\psi$ is $\C^1$ in a neighborhood of $\ox$, we can ensure strict proto-differentiability of  
  the mapping $G$ from \eqref{G} on a neighborhood of $(\ox,0)$.  This implies that the localization $s$ is strictly differentiable in a neighborhood of $0$, a property equivalent 
 to $\C^1$-smoothness of $s$ around $0$ according to \cite[Exercise~1D.8]{DoR14}. This brings us to the following observation. 
 
  \begin{Theorem} \label{c1fors}Assume that $\ox$ is a solution to the generalized equation in \eqref{ge} in which $\psi$ is ${\cal C}^1$ around  $\ox$ and  $f$ is   prox-regular and subdifferentially continuous at $\ox$ for $-\psi(\ox)$.
   Then the following properties are equivalent:
 \begin{enumerate}[noitemsep,topsep=2pt]
\item the mapping $G$, taken from \eqref{G}, is metrically regular at $\ox$ for $0$ and  $\partial f$ is strictly proto-differentiable at   $x$ for  $v$ for all  $(x,v)\in \gph \sub f$ close to $(\ox, -\psi(\ox))$;
\item the solution mapping $S$ from \eqref{sol} has a Lipschitz continuous single-valued localization $s$ around $0\in \X$ for $\ox$, which  is ${\cal C}^1$ around $0$. 
\end{enumerate} 
 \end{Theorem}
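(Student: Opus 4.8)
The plan is to deduce Theorem~\ref{c1fors} from Theorem~\ref{smrch} together with a bootstrapping argument that promotes strict differentiability of the solution-map localization at a single point to strict differentiability on a whole neighborhood, the latter being equivalent to ${\cal C}^1$-smoothness by \cite[Exercise~1D.8]{DoR14}. Throughout put $\ov:=-\psi(\ox)$.

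Consider first the implication (a)$\implies$(b). The hypothesis in (a) that $\partial f$ is strictly proto-differentiable at every $(x,v)\in\gph\partial f$ near $(\ox,\ov)$ in particular holds at $(\ox,\ov)$ itself, so Theorem~\ref{smrch} applies and produces neighborhoods $U$ of $\ox$ and $V$ of $0$ such that the localization $s$ given by $s(y)=S(y)\cap U$ is single-valued and Lipschitz continuous on $V$, with $\gph s=\gph S\cap(V\times U)$. Fix $y\in V$ and set $x:=s(y)$, $v:=y-\psi(x)$; then $(x,v)\in\gph\partial f$, and since $s$ and $\psi$ are continuous we may, after shrinking $V$, assume $(x,v)$ lies in the region where $\partial f$ is strictly proto-differentiable. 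As $\psi$ is ${\cal C}^1$ around $\ox$, it is strictly differentiable at $x$, so the sum rule for strict proto-differentiability \cite[Proposition~5.3]{HJS22} gives that $G=\psi+\partial f$ is strictly proto-differentiable at $x$ for $y$. Strict proto-differentiability is a geometric property of the graph and is preserved under inversion, so $S=G^{-1}$ is strictly proto-differentiable at $y$ for $x$; since $\gph s$ coincides with $\gph S$ near $(y,x)$, the same holds for $s$. Because $s$ is Lipschitz continuous around $y$, \cite[Proposition~3.1]{r85} yields that $s$ is strictly differentiable at $y$. As $y\in V$ was arbitrary, $s$ is strictly differentiable throughout $V$, hence ${\cal C}^1$ around $0$ by \cite[Exercise~1D.8]{DoR14}, which proves (b).

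Now consider (b)$\implies$(a). A ${\cal C}^1$ localization is in particular strictly differentiable at $0$, so the implication (b)$\implies$(a) of Theorem~\ref{smrch} gives that $G$ is metrically regular at $\ox$ for $0$ and that $\partial f$ is strictly proto-differentiable at $\ox$ for $\ov$; it remains to upgrade this to all nearby pairs. Let $(x,v)\in\gph\partial f$ be close to $(\ox,\ov)$ and put $y:=\psi(x)+v$, which is then close to $0$. Then $x\in S(y)$, and since $x\in U$ and $y\in V$, single-valuedness of the localization forces $x=s(y)$. Because $s$ is ${\cal C}^1$ around $0$, it is strictly differentiable at $y$, so by \cite[Proposition~3.1]{r85} and the identity $\gph s=\gph S$ near $(y,x)$, $S$ is strictly proto-differentiable at $y$ for $x$, hence $G=S^{-1}$ is strictly proto-differentiable at $x$ for $y$. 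Subtracting the strictly differentiable mapping $\psi$ through the sum rule \cite[Proposition~5.3]{HJS22} shows that $\partial f=G-\psi$ is strictly proto-differentiable at $x$ for $v=y-\psi(x)$. As $(x,v)$ was an arbitrary point of $\gph\partial f$ near $(\ox,\ov)$, assertion (a) follows.

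The main obstacle is the neighborhood bookkeeping common to both directions: one must ensure that passing through the localization $s$ and the inversion $G\leftrightarrow G^{-1}$ sets up an exact correspondence between points of $\gph\partial f$ near $(\ox,\ov)$ and points of $\dom s$ near $0$, and that the three transfers of strict proto-differentiability used---the sum rule, invariance under inversion, and restriction of the graph to the localization---all occur on one common neighborhood. Once this is in place, each individual step is routine given Theorem~\ref{smrch}, \cite[Proposition~5.3]{HJS22}, \cite[Proposition~3.1]{r85}, and \cite[Exercise~1D.8]{DoR14}.
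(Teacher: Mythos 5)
Your proposal is correct and follows exactly the route the paper intends: the paper states Theorem~\ref{c1fors} without a separate proof, justifying it in the preceding paragraph by the same bootstrapping you carry out (apply Theorem~\ref{smrch} at each nearby pair via the sum rule, graph inversion, and the localization identity to get strict differentiability of $s$ on a whole neighborhood, then invoke \cite[Exercise~1D.8]{DoR14}). The only cosmetic remark is that for adding the smooth $\psi$ in the forward direction the paper cites \cite[Proposition~5.1]{HJS22} rather than Proposition~5.3, but this does not affect the argument.
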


As an immediate consequence of Theorem~\ref{smrch}, we arrive at the following equivalence of metric regularity and strong metric regularity of $G$ in the presence of  a strict proto-differetiability assumption.

\begin{Corollary}[equivalence between metric regularity and strong metric regularity]\label{thm:MR1}  
Assume that $\ox$ is a solution to the generalized equation in \eqref{ge} in which $\psi$ is strictly differentiable at $\ox$ and  $f$ is   prox-regular and subdifferentially continuous at $\ox$ for $\ov:=-\psi(\ox)$ and 
that $\partial f$ is strictly proto-differentiable at the point $\ox$ for  $\ov$. Then,   the mapping $G$, taken from \eqref{G}, is metrically regular at $\ox$ for $0$ if and only if it is strongly metrically regular at $\ox$ for $0$.
\end{Corollary}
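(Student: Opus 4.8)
The plan is to read the statement straight off Theorem~\ref{smrch}, which already carries all the substantive content; the corollary is essentially a dictionary translation. First I would dispatch the trivial direction: if $G$ is strongly metrically regular at $\ox$ for $0$, then by \cite[Proposition~3G.1]{DoR14} it is in particular metrically regular at $\ox$ for $0$, so there is nothing to prove in that implication.

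For the nontrivial direction, I would suppose that $G$ is metrically regular at $\ox$ for $0$. The standing hypotheses of the corollary are exactly those of Theorem~\ref{smrch}: $\psi$ is strictly differentiable at $\ox$, $f$ is prox-regular and subdifferentially continuous at $\ox$ for $\ov = -\psi(\ox)$, and $\partial f$ is strictly proto-differentiable at $\ox$ for $\ov$. Together with the assumed metric regularity of $G$, this is precisely condition (a) of Theorem~\ref{smrch}. Invoking that theorem, condition (b) holds: the solution mapping $S$ from \eqref{sol}, which is $G^{-1}$, admits a Lipschitz continuous single-valued localization around $0 \in \X$ for $\ox$. By the definition of strong metric regularity recalled at the beginning of Section~\ref{sec04}, the existence of such a localization of $G^{-1}$ around $0$ for $\ox$ is exactly the assertion that $G$ is strongly metrically regular at $\ox$ for $0$, which completes this implication.

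I do not anticipate any genuine obstacle here, since the whole weight of the argument rests in Theorem~\ref{smrch}. The only points deserving a moment of care are purely bookkeeping: matching the phrase ``Lipschitz continuous single-valued localization of $S = G^{-1}$'' in Theorem~\ref{smrch}(b) with the definition of strong metric regularity of $G$, and observing that the additional conclusion of Theorem~\ref{smrch} — strict differentiability of the localization at $0$ — is not needed for the corollary, although it is obtained for free.
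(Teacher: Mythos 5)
Your proposal is correct and follows the same route as the paper, which likewise deduces the corollary directly from the equivalence in Theorem~\ref{smrch}; your version merely spells out the bookkeeping (the trivial direction via \cite[Proposition~3G.1]{DoR14} and the identification of Theorem~\ref{smrch}(b) with strong metric regularity of $G$) that the paper leaves implicit.
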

\begin{proof}
This directly falls out of the established equivalence in Theorem~\ref{smrch}. 
\end{proof}

As mentioned above, Dontchev and Rockafellar  in \cite[Theorem~1]{dr96} derived the equivalence of metric regularity and strong metric regularity of the solution mapping $S$ from \eqref{sol} when $f=\dd_C$, where $C$ is a polyhedral convex subset of $\X$.
In such a framework,  Corollary~\ref{thm:MR1} covers the aforementioned seminal result 
under the extra assumption of strict proto-differentiability of $N_C$ at $\ox$ for $-\psi(\ox)$. Note that it was shown in \cite[Theorem~3.2(c)]{HJS22} that the latter condition on $N_C$ amounts to the relative 
interior condition $-\psi(\ox)\in \ri N_C(\ox)$.  While not knowing  whether or not a similar conclusion can be achieved for prox-regular functions in general, we will demonstrate in the next section that 
such an observation holds true for   $\C^2$-decomposable functions. 
%It is known that this class of functions encompasses important functions appearing in different applications of constrained 
%and composite optimization problems; see the next section for more detail. 
We should add to this discussion that our proof is fundamentally different from the approach exploited in \cite{dr96} which  relied heavily on Robinson's results in \cite{rob92}
 and did not utilize the theory of second-order variational analysis, used in this paper.  Note also that when $f$ in \eqref{ge} enjoys a certain composite representation, Corollary~\ref{thm:MR1}  boils down to our recent result in \cite[Theorem~4.3]{HaS22}.
 Finally, the recent result in \cite[Corollary~7.3]{go} presents a characterization of strong metric regularity of  graphically Lipschitzian mappings (cf. \cite[Definition~9.66]{rw}).
It is not clear, however,  whether the latter graphically Lipschitzian assumption does hold for the mapping $G$ from \eqref{G} and hence whether the latter result can be exploited in our setting.   
 
 It was conjectured in \cite[Conjecture~4.7]{dmn} that if the proper function $f:\X\to \oR$ is prox-regular and subdifferentially continuous and $\ox$ is a local minimum of $f$, then 
 metric regularity and strong metric regularity of $\sub f$ at $\ox$ for $0$ are equivalent. First, observe that the condition of $\ox$ being a local minimizer is not essential, since
 one can choose a ${\cal C}^2$ mapping $g:\X\to \R$ and a polyhedral convex set $C\subset \X$ and set $f=g+\dd_C$. Employing \cite[Theorem~1]{dr96} tells us that 
  the desired equivalence of   metric regularity and strong metric regularity of $\sub f$ at $\ox$ for $0$  holds without demanding that $\ox$ be a local minimum of $f$.
 Dropping the latter condition from \cite[Conjecture~4.7]{dmn}, one can find in \cite[Example~BE.4]{kk} an example of a ${\cal C}^{1}$ function with Lipschitz continuous derivative (thus prox-regular
 and subdifferentially continuous by \cite[Proposition~13.34]{rw})  that is metrically regular
 but is not strongly metrically regular. This example suggests that  the latter equivalence does not hold for prox-regular in general. 
Corollary~\ref{thm:MR1} provides an answer to this question by demonstrating that 
if, in addition,  $\sub f$ is strictly proto-differentiable at $\ox$ for $\ov \in \partial f(\ox)$, we can ensure that metric regularity and strong metric regularity of $\sub f$ at $\ox$ for $\ov$  are equivalent, which confirms the  conjecture in \cite[Conjecture~4.7]{dmn} 
 under this extra assumption. While Dontchev and Rockafellar's result    in   \cite[Theorem~1]{dr96} indicates that strict proto-differentiability is not required for such an equivalence in general,
 it remains as an open question  to proceed if the strictly proto-differentiability condition fails. 
 
 When $\nabla \psi$ in the generalized equation in \eqref{ge} enjoys a certain symmetry property,  we can improve Corollary~\ref{thm:MR1}. To do this, recall that a set-valued mapping $F:\X\tto \Y$ is called strongly 
 metrically subregular at $x$ for  $y \in  F(x)$ if there exist a constant $\ell \ge 0$
and a neighborhood $U$ of $x$ such that the estimate $\|x'-x\|\le \ell\, \dist (y,F(x'))$ holds for any $x'\in U$.  

\begin{Corollary}\label{thm:MR4}  
Assume that $\ox$ is a solution to the generalized equation in \eqref{ge} in which $\nabla \psi(\ox)=\nabla \psi(\ox)^*$  and  $f$ is   prox-regular and subdifferentially continuous at $\ox$ for $\ov:=-\nabla \psi(\ox)$ and 
that $\partial f$ is strictly proto-differentiable at the point $\ox$ for  $\ov$. Then   
the following properties are equivalent:
 \begin{enumerate}[noitemsep,topsep=2pt]
 \item  the mapping $G$, taken from \eqref{G}, is strongly metrically regular  at $\ox$ for $0$;
\item the mapping $G$ is metrically regular at $\ox$ for $0$;
\item  the mapping $G$ is strongly metrically subregular  at $\ox$ for $0$.
\end{enumerate}
\end{Corollary}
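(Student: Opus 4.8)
The plan is to establish the chain $\text{(a)}\Leftrightarrow\text{(b)}\Leftrightarrow\text{(c)}$: the first equivalence is already available, and the symmetry hypothesis $\nabla\psi(\ox)=\nabla\psi(\ox)^*$ is precisely what is needed to merge the coderivative criterion for metric regularity with the graphical-derivative criterion for strong metric subregularity.

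First I would note that $\text{(a)}\Leftrightarrow\text{(b)}$ is exactly Corollary~\ref{thm:MR1}: its hypotheses ($\psi$ strictly differentiable at $\ox$, $f$ prox-regular and subdifferentially continuous at $\ox$ for $\ov$, $\partial f$ strictly proto-differentiable at $\ox$ for $\ov$) are precisely those assumed here. Along the way $\text{(a)}\Rightarrow\text{(c)}$ is automatic, since a Lipschitz single-valued localization $s$ of $G^{-1}$ around $0$ for $\ox$ satisfies $x'=s(y)$ whenever $x'$ is close to $\ox$ and $y\in G(x')$, whence $\|x'-\ox\|=\|s(y)-s(0)\|\le\ell\|y\|$ and thus $\|x'-\ox\|\le\ell\,\dist(0,G(x'))$; but this also drops out of the chain below. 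It therefore remains to prove $\text{(b)}\Leftrightarrow\text{(c)}$.

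For this, I would first record the graphical derivative and coderivative of $G=\psi+\partial f$ at $\ox$ for $0$. By the sum rules of \cite[Exercise~10.43]{rw} and the formula of Theorem~\ref{thm:gdcod}, for every $w\in\X$,
\begin{align*}
DG(\ox,0)(w)&=\nabla\psi(\ox)(w)+D(\partial f)(\ox,\ov)(w)=\big(\nabla\psi(\ox)+A\big)(w)+N_\K(w),\\
D^*G(\ox,0)(w)&=\nabla\psi(\ox)^*(w)+D^*(\partial f)(\ox,\ov)(w)=\big(\nabla\psi(\ox)^*+A\big)(w)+N_\K(w),
\end{align*}
where $A$ is the self-adjoint linear operator and $\K$ the linear subspace taken from \eqref{gdcod}. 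Invoking $\nabla\psi(\ox)=\nabla\psi(\ox)^*$ yields the operator identity $DG(\ox,0)=D^*G(\ox,0)$, hence $DG(\ox,0)^{-1}(0)=D^*G(\ox,0)^{-1}(0)$. Since $\gph G$ is locally closed at $(\ox,0)$ — as already observed in the proof of Theorem~\ref{smrch} — the Mordukhovich criterion \cite[Theorem~3.3(ii)]{Mor18} gives that (b) holds if and only if $D^*G(\ox,0)^{-1}(0)=\{0\}$, while the graphical-derivative characterization of strong metric subregularity (see, e.g., \cite[Theorem~4E.1]{DoR14}) gives that (c) holds if and only if $DG(\ox,0)^{-1}(0)=\{0\}$. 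These two conditions coincide, so $\text{(b)}\Leftrightarrow\text{(c)}$; combined with the first step this yields the equivalence of (a), (b), (c), and the formula for the derivative of the localization, if desired, follows from Theorem~\ref{smrch}.

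No step here is genuinely difficult once Theorem~\ref{thm:gdcod} and Corollary~\ref{thm:MR1} are in hand; the proof is essentially bookkeeping with the two point-based criteria. The points requiring care are the verification that the sum-rule hypotheses and the local closedness of $\gph G$ demanded by those criteria are met under the stated assumptions, and — the conceptual crux — recognizing that it is the self-adjointness of $A$ (Proposition~\ref{glp}(b)) together with $\nabla\psi(\ox)=\nabla\psi(\ox)^*$ that promotes the equality $D(\partial f)(\ox,\ov)=D^*(\partial f)(\ox,\ov)$ of Theorem~\ref{thm:gdcod} to the operator identity $DG(\ox,0)=D^*G(\ox,0)$. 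Without the symmetry of $\nabla\psi(\ox)$ these two mappings generally differ, only the Mordukhovich criterion is available, and the equivalence with (c) breaks down.
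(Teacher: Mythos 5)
Your proposal is correct and follows essentially the same route as the paper: (a)$\Leftrightarrow$(b) is delegated to Corollary~\ref{thm:MR1}, and (b)$\Leftrightarrow$(c) is obtained by combining the coderivative criterion for metric regularity with the graphical-derivative criterion of \cite[Theorem~4E.1]{DoR14} for strong metric subregularity, using the symmetry $\nabla\psi(\ox)=\nabla\psi(\ox)^*$ together with the equality $D(\partial f)(\ox,\ov)=D^*(\partial f)(\ox,\ov)$ from Theorem~\ref{thm:gdcod} to identify $DG(\ox,0)$ with $D^*G(\ox,0)$. The only cosmetic difference is that you write out the explicit representation $(\nabla\psi(\ox)+A)(w)+N_\K(w)$, whereas the paper works directly with the sum-rule identities; the substance is identical.
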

\begin{proof}
 The equivalence of (a) and (b) was already established in Corollary~\ref{thm:MR1}. To prove (b) and (c) are also equivalent, we deduce from  \cite[Theorem~4E.1]{DoR14}  that $G$  is strongly metrically subregular  at $\ox$ for $0$
 if and only if the implication 
 $$
 0\in   D G(\ox, 0)(w)=\nabla \psi(\ox)w+ D(\sub f)(\ox, \ov)(w)\; \Longrightarrow\; w=0
 $$
 holds, where the equality comes from \eqref{mr2.3}.  By  \cite[Theorem~3.3(ii)]{Mor18},  $G$ is metrically regular at $\ox$ for $0$ if and only if  $0\in D^*G(\ox, 0)(w)$ yields $w=0$. 
 By the first equality in \eqref{cosum} and the assumption $\nabla \psi(\ox)=\nabla \psi(\ox)^*$, we conclude from   \eqref{gdcod} that 
 $$
 D^*G(\ox, 0)(w)=\nabla \psi(\ox)^*w+ D^*(\sub f)(\ox, \ov)(w)=\nabla \psi(\ox)w+ D(\sub f)(\ox, \ov)(w),
 $$
and hence we arrive at   $D G(\ox, 0)(w)=D^*G(\ox, 0)(w)$ for any $w\in \X$. This, coupled with the above characterizations of strong metric subregularity and metric regularity, demonstrates that 
 (b) and (c) are also equivalent and so completes the proof.
 \end{proof}
 
 Note that the condition $\nabla \psi(\ox)=\nabla \psi(\ox)^*$ in  the generalized equation in \eqref{ge} is satisfied for  an important instance  of generalized equations, namely the KKT system of optimization problems;
 see Theorem~\ref{mrkkt} for more detail. 
 
 %To elaborate more, observe first that if $q:\X\to \X$ is ${\cal C}^2$ function and  $f$ is taken from \eqref{ge}, then the generalized equation 
% \begin{equation}\label{ge4}
 %0\in \nabla q(x)+\sub f(x),
% \end{equation}
% can be reformulated as an example of  the generalized equation in \eqref{ge} with $\psi=0$.  To see this, recall that in the setting of Corollary~\ref{thm:MR4}, 
 %$f$ is prox-regular and subdifferentially continuous. Thus, \eqref{ge4} amounts to the generalized equation $0\in \sub \tilde f(x)$ with $\tilde f:= q+f$.
 %It follows from \cite[Exercise~13.35]{rw} that $\tilde f$ is again prox-regular and subdifferentially continuous, which allows us to cover \eqref{ge4} via \eqref{ge} with $\psi=0$.
 %As shown in the next section, the KKT system of many important classes of constrained optimization problems can be written in the form of \eqref{ge4}, which opens the door
% to apply Corollary~\ref{thm:MR4} to such a class of generalized equations. 

 We proceed with a characterization of  continuous differentiability   of the proximal mapping of prox-regular functions. Recall that proximal mapping 
of a function $f:\X\to \oR$ for a parameter value  $\gamma>0$,  denoted by  $\prox_{\gamma f}$, is  defined  by 
\begin{equation*}\label{proxmap}
\prox_{\gamma f}(x)= \argmin_{w\in \X}\Big\{f(w)+\frac{1}{2\gamma}\|w-x\|^2\Big\}, \quad x\in \X.
\end{equation*}
Recall from  \cite[Exercise~1.24]{rw}  that a function $f:\X\to \oR$ is called  prox-bounded if the function $f+\al\|\cdot\|^2$ is bounded from below on $\X$ for some $\al\in \R$. 
We begin with recording   some properties of the proximal mapping of prox-regular functions from \cite[Proposition~13.37]{rw}, which will be used in our characterization of continuous differentiability of their proximal mappings. 

\begin{Proposition}\label{proj1}
 Assume that  $f:\X\to \oR$ is   prox-regular  and subdifferentially continuous at $\ox$  for  $\ov\in \sub f(\ox)$ and that $f$ is prox-bounded. 
 Then there exist positive constants $\ve$ and $r$ such that for any $\gamma\in (0,1/r)$, there is a neighborhood $U_\gamma$ of $\ox+\gamma\ov$ on which $\prox_{\gamma\ph}$ is nonempty, single-valued,   Lipschitz continuous, and 
 can be calculated by 
\begin{equation}\label{proj3}
\prox_{\gamma f}=(I+\gamma T_\ve)^{-1},
\end{equation}
where  the set-valued mapping $T_\ve:\X\tto \X$ is defined by 
\begin{equation}\label{logh}
T_\ve(x)=\begin{cases}
\sub f(x)\cap \B_{\ve}(\ov)&\mbox{if}\;\; x\in \B_\ve(\ox),\\
\emptyset& \mbox{otherwise},
\end{cases}
\end{equation}
and where  $I$ stands for the identity operator on $\X$. Moreover, we have 
\begin{equation}\label{mepr}
\nabla e_\gamma f(x)=\frac{1}{\gamma}(x-\prox_{\gamma f}(x)), \;\; x\in U_\gamma,
\end{equation}
where the Moreau envelope function   $e_{\gamma} f$ is defined by 
$$
e_\gamma f(x)=\inf_{w\in \in \X}\big\{f(w)+\frac{1}{2\gamma}\|w-x\|^2\big\}, \;\; x\in \X.
$$
In addition, if $f$ is convex, then $T_\ve$ in \eqref{proj3} can be replaced with $\sub f$ and the  constant $r$ can be taken as $0$ with convention $1/0=\infty$.
\end{Proposition}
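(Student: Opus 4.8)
The plan is to obtain this from \cite[Proposition~13.37]{rw}, from which the formulas \eqref{proj3}, \eqref{logh} and \eqref{mepr} are taken, and to treat the convex case via the classical theory of Moreau. For completeness I outline the mechanism. Fix $\ve_0>0$ and $r\ge0$ as in the definition \eqref{prox} of prox-regularity of $f$ at $\ox$ for $\ov$, enlarging $r$ if necessary so that $1/r$ does not exceed the prox-threshold of $f$. Shrinking $\ve_0$ to some $\ve>0$ if needed, subdifferential continuity of $f$ guarantees that $f(x)<f(\ox)+\ve$ whenever $(x,v)$ belongs to the graph of the truncated mapping $T_\ve$ from \eqref{logh}. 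The crucial consequence is a ``localized monotonicity'' of $T_\ve$: if $(x_i,v_i)\in\gph T_\ve$, $i=1,2$, then applying \eqref{prox} to the pair $(x_1,v_1)$ with test point $x_2$ and to $(x_2,v_2)$ with test point $x_1$ and adding the two inequalities yields $\la v_1-v_2,x_1-x_2\ra\ge-r\|x_1-x_2\|^2$; that is, $T_\ve+rI$ is monotone. Hence $I+\gamma T_\ve=\gamma\big((1/\gamma-r)I+(T_\ve+rI)\big)$ is strongly monotone with modulus $1-\gamma r>0$ for $\gamma\in(0,1/r)$, so $(I+\gamma T_\ve)^{-1}$ is at most single-valued and Lipschitz continuous with constant $(1-\gamma r)^{-1}$.

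Next I identify this resolvent with $\prox_{\gamma f}$ near $\ox+\gamma\ov$. Prox-boundedness makes $\prox_{\gamma f}(x)$ nonempty for every $x$ once $\gamma$ is below the prox-threshold, and any $w\in\prox_{\gamma f}(x)$ satisfies the first-order optimality condition $\tfrac1\gamma(x-w)\in\sub f(w)$. A continuity argument---using subdifferential continuity of $f$ and the local boundedness of minimizers supplied by the prox-bound---shows that for $x$ in some neighborhood $U_\gamma$ of $\ox+\gamma\ov$ one has $w\in\B_\ve(\ox)$ and $\tfrac1\gamma(x-w)\in\B_\ve(\ov)$, hence $\tfrac1\gamma(x-w)\in T_\ve(w)$ and $w\in(I+\gamma T_\ve)^{-1}(x)$. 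Combined with the previous paragraph, $\prox_{\gamma f}$ is nonempty, single-valued and Lipschitz continuous on $U_\gamma$ and coincides there with $(I+\gamma T_\ve)^{-1}$, which is \eqref{proj3}.

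For \eqref{mepr}, write $e_\gamma f(x)=f(\prox_{\gamma f}(x))+\tfrac1{2\gamma}\|\prox_{\gamma f}(x)-x\|^2$ on $U_\gamma$. Since the minimizer is unique and Lipschitz in $x$, a Danskin-type argument---differentiating $w\mapsto f(w)+\tfrac1{2\gamma}\|w-x\|^2$ in the parameter $x$ at the fixed optimal $w$, where the $f$-term does not contribute---gives $\nabla e_\gamma f(x)=\tfrac1\gamma(x-\prox_{\gamma f}(x))$; the right-hand side is Lipschitz, so $e_\gamma f$ is $\C^{1,1}$ near $\ox+\gamma\ov$. This is exactly \cite[Proposition~13.37]{rw}. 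Finally, when $f$ is convex everything is global: $\sub f$ is maximal monotone, so for all $\gamma>0$ the map $I+\gamma\sub f$ is surjective with single-valued firmly nonexpansive (hence Lipschitz) inverse, $\prox_{\gamma f}=(I+\gamma\sub f)^{-1}$ on all of $\X$, one may take $r=0$ with $1/0=\infty$, and the truncation \eqref{logh} is unnecessary.

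The main obstacle is the bookkeeping in the second paragraph: verifying that the \emph{unconstrained} proximal minimizer, together with its attached subgradient $\tfrac1\gamma(x-w)$, actually lands in $\B_\ve(\ox)\times\B_\ve(\ov)$ uniformly for $x$ near $\ox+\gamma\ov$, so that the localized estimate \eqref{prox} may legitimately be used. Balancing the choice of $\ve$, the prox-bound, and subdifferential continuity to secure this is the technical core of \cite[Proposition~13.37]{rw}; the rest is monotone-operator calculus and the classical convex theory.
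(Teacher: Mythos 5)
Your proposal is correct and follows the same route as the paper, which offers no proof of its own but simply records this statement from \cite[Proposition~13.37]{rw} (together with \cite[Theorem~13.36]{rw} for the localized hypomonotonicity and the classical Moreau theory for the convex case). Your outline of the underlying mechanism --- truncation, strong monotonicity of $I+\gamma T_\ve$, identification of the resolvent with $\prox_{\gamma f}$, and the envelope gradient formula --- accurately reflects how that cited result is established.
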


\begin{Theorem}\label{chpr}
Assume that  $f:\X\to \oR$ is   prox-regular and subdifferentially continuous  at $\ox$  for  $\ov \in \sub f(\ox)$ and that $f$ is prox-bounded. 
Then the following properties are equivalent:
  \begin{itemize}[noitemsep,topsep=2pt]
\item [ \rm {(a)}]  there exists a positive constant   $r$ such that for any $\gamma\in (0,1/r)$, the proximal mapping $\prox_{\gamma f}$ is ${\cal C}^1$ around $\ox+\gamma\ov$;
\item [ \rm {(b)}]  there exists a positive constant   $r$ such that for any $\gamma\in (0,1/r)$, the envelope function   $e_{\gamma} f$ is ${\cal C}^2$ around $\ox+\gamma\ov$;
\item [ \rm {(c)}]  the subgradient mapping  $\partial f$ is strictly proto-differentiable at   $x$ for  $v$ for all  $(x,v)\in \gph \sub f$ close to  $(\ox, \ov)$. 
\end{itemize}
If, in addition, $f$ is convex, then the constant $r$ in {\rm(}a{\rm)} and {\rm(}b{\rm)}   can be taken as $0$ with convention $1/0=\infty$.
\end{Theorem}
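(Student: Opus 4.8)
The plan is to dispatch the equivalence (a)$\iff$(b) directly through the Moreau envelope, and then to recognize $\prox_{\gamma f}$ as the single-valued localization of the solution map of a canonically perturbed generalized equation of the exact type treated in Theorem~\ref{c1fors}, so that (a)$\iff$(c) becomes an instance of that result.

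For (a)$\iff$(b): Proposition~\ref{proj1} already guarantees that for $\gamma\in(0,1/r)$ the envelope $e_\gamma f$ is differentiable on a neighborhood of $\ox+\gamma\ov$ with $\nabla e_\gamma f(x)=\frac1\gamma\big(x-\prox_{\gamma f}(x)\big)$ by \eqref{mepr}. Hence $e_\gamma f$ is ${\cal C}^2$ around $\ox+\gamma\ov$ if and only if $\nabla e_\gamma f$ is ${\cal C}^1$ there, which in turn holds if and only if $\prox_{\gamma f}$ is ${\cal C}^1$ around $\ox+\gamma\ov$. Taking the same constant $r$ in (a) and in (b) yields (a)$\iff$(b), and the convex case uses the final assertion of Proposition~\ref{proj1} to take $r=0$.

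For (a)$\iff$(c): Fix $r$ as in Proposition~\ref{proj1} and any $\gamma\in(0,1/r)$, set $z_0:=\ox+\gamma\ov$, and consider the affine (hence ${\cal C}^1$) mapping $\psi_\gamma(x):=\frac1\gamma(x-z_0)$, for which $-\psi_\gamma(\ox)=\ov$; thus $f$ is prox-regular and subdifferentially continuous at $\ox$ for $-\psi_\gamma(\ox)$ and $\ox$ solves $0\in\psi_\gamma(x)+\sub f(x)$. Writing $G_\gamma:=\psi_\gamma+\sub f$, a short computation shows $G_\gamma^{-1}(y)=(I+\gamma\sub f)^{-1}(z_0+\gamma y)$, so by \eqref{proj3} the inverse $G_\gamma^{-1}$ admits the single-valued Lipschitz localization $y\mapsto\prox_{\gamma f}(z_0+\gamma y)$ around $0$ for $\ox$; in particular $G_\gamma$ is metrically regular (indeed strongly metrically regular) at $\ox$ for $0$, regardless of whether (c) holds. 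Consequently, for this $\gamma$ the metric regularity requirement in the first assertion of Theorem~\ref{c1fors}(a) is automatic, and Theorem~\ref{c1fors} collapses to the equivalence: $\sub f$ is strictly proto-differentiable at $x$ for $v$ for all $(x,v)\in\gph\sub f$ near $(\ox,\ov)$ $\iff$ $\prox_{\gamma f}$ is ${\cal C}^1$ around $z_0$. Since the left-hand statement is precisely (c) and is independent of $\gamma$, we get (a)$\iff$(c): if (c) holds, the displayed equivalence gives (a) for every $\gamma\in(0,1/r)$ with $r$ from Proposition~\ref{proj1}; conversely, if (a) holds with some constant $r_0$, picking any $\gamma\in(0,1/\max\{r,r_0\})$ and reading the equivalence the other way yields (c). The convex case again takes $r=0$ via Proposition~\ref{proj1}.

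The only delicate points are of a bookkeeping nature: threading the ``there exists $r$'' quantifiers consistently across (a), (b), (c) — handled by always using the constant furnished by Proposition~\ref{proj1} — and observing that metric regularity of $G_\gamma$ is automatic here, which is exactly what reduces the hypothesis of Theorem~\ref{c1fors}(a) to the pure strict proto-differentiability condition (c). Everything else is a direct translation through \eqref{proj3} and \eqref{mepr}.
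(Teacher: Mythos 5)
Your argument is correct and follows essentially the same route as the paper: (a)$\iff$(b) via \eqref{mepr}, and (a)$\iff$(c) by recasting $\prox_{\gamma f}$ as the localized solution map of a generalized equation of type \eqref{ge} and invoking Theorem~\ref{c1fors}, with metric regularity supplied automatically by Proposition~\ref{proj1}. The only step you should make explicit is the passage from $G_\gamma^{-1}(y)=(I+\gamma\sub f)^{-1}(z_0+\gamma y)$ to \eqref{proj3}: formula \eqref{proj3} is stated for the truncated mapping $T_\ve$ from \eqref{logh}, not for the full $\sub f$, and for nonconvex prox-regular $f$ the set $(I+\gamma\sub f)^{-1}(z)$ can strictly contain $\{\prox_{\gamma f}(z)\}$. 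The identification does hold after localizing, because any $(z,x)\in\gph (I+\gamma\sub f)^{-1}$ near $(z_0,\ox)$ forces $v=(z-x)/\gamma$ near $\ov$, so the graphs of $(I+\gamma\sub f)^{-1}$ and $(I+\gamma T_\ve)^{-1}$ coincide near $(z_0,\ox)$; the paper sidesteps this by running the whole argument with $\gamma T_\ve$ in place of $\sub f$ and noting that Theorem~\ref{c1fors} only consults $\gph\sub f$ near $(\ox,-\psi(\ox))$, which is exactly the observation your shortcut needs. With that sentence added, your proof is complete and buys nothing different from the paper's.
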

\begin{proof} 
The equivalence of (a) and (b) results directly from \eqref{mepr}.
We now show that (a) is also equivalent to (c). To do so,
take the positive constants $\ve$ and $r$ from Proposition~\ref{proj1}, pick  $\gamma\in (0,1/r)$, and take also the neighborhood $U_\gamma$ of $\ox +\gamma \ov$ from Proposition~\ref{proj1}.
Define the mapping $\psi:\X\to \X$ by $\psi(x)=x-(\ox+\gamma\ov)$ for any $x\in \X$
and consider the generalized equation 
\begin{equation}\label{gen}
0\in \psi(x)+\gamma T_\ve(x),
\end{equation}
where $T_\ve$ comes from \eqref{logh}. It is not hard to see that $\ox$ is a solution to \eqref{gen}. Define now 
the solution mapping  $S:\X\tto \X$ by 
$$
S(u):=\big\{x\in \X|\, u\in \psi (x)+ \gamma T_\ve(x)\big\},\quad u\in \X. 
$$
 Since $U_\gamma$ is a neighborhood of $\ox+\gamma\ov$, we find a $\dd>0$ such that $ \dd\B\subset U_\gamma -(\ox+\gamma\ov)$.
Take $u\in \dd\B$ and observe that $x\in S(u)$  amounts to $u+\ox+\gamma\ov\in (I+\gamma T_\ve)(x)$, which is equivalent via \eqref{proj3} to
$$
x= (I+\gamma T_\ve)^{-1}(u+\ox+\gamma\ov)=\prox_{\gamma f}(u+\ox+\gamma\ov).
$$
 These allow us to conclude that  $S(u)=\prox_{\gamma f}(u+\ox+r\ov)$ whenever $u\in \dd\B$.
Suppose that  (a) holds. We deduce from the latter that $S$ is ${\cal C}^1$ around $0$. By Theorem~\ref{c1fors}, the mapping $ \gamma T_\ve$ is strictly proto-differentiable at  $x$ for  $\gamma v$
   for all  $(x,v)\in \gph \sub f$ close to $(\ox, \ov)$. Since $\gph T_\ve=(\gph \sub f)\cap \big(\B_\ve(\ox)\times \B_\ve(\ov)\big)$, the latter property of $\gamma T_\ve$  is equivalent to (c) and hence we are done with (a)$\implies$(c).
   
Assume now that (c) is satisfied. Thus, $\gamma T_\ve$ is strictly proto-differentiable at  $x$ for  $\gamma v$ for all  $(x,v)\in \gph \sub f$ close to  $(\ox, \ov)$.
By  \cite[Proposition~5.3]{HJS22}, $\psi+\gamma T_\ve$ is  is strictly proto-differentiable at  $x$ for  $x+\gamma v -(\ox+\gamma \ov)$ for all  $(x,v)\in \gph \sub f$ close to  $(\ox, \ov)$. Using again $S(u)=\prox_{\gamma f}(u+\ox+r\ov)$ for all $u\in \dd\B$
and employing Proposition~\ref{proj1}, we get that $S$ is single-valued and Lipschitz continuous on $\dd\B$. Appealing now to \cite[Propostion~3G.1]{DoR14} and using the facts that $S(0)=\ox$ and $S=(I+\gamma T_\ve)^{-1}$ particularly tell us that 
$ \psi + \gamma T_\ve$ is metrically regular at $\ox$ for $0$.
Note that we can apply Theorem~\ref{smrch}, and therefore also Theorem~\ref{c1fors}, to the generalized equation in \eqref{gen} since 
$\gph T_\ve=(\gph \sub f)\cap \big(\B_\ve(\ox)\times \B_\ve(\ov)\big)$ due to  \eqref{logh} and since Theorem~\ref{c1fors} exploits only local points of $\gph \sub f$ close to $(\ox, -\psi(\ox))$ in the generalized equation in \eqref{ge}. 
It follows from  Theorem~\ref{c1fors} that the solution mapping $S$ has a Lipschitz continuous localization around $0$ for $\ox$, which is ${\cal C}^1$ in a neighborhood of $0$. Since $S(u)=\prox_{\gamma f}(u+\ox+\gamma \ov)$ for $u\in \dd\B$, we conclude that the proximal mapping $\prox_{\gamma f}$ is ${\cal C}^1$ in a neighborhood of $\ox+\gamma \ov$,
which proves  the implication (c)$\implies$(a).

If $f$ is convex, the same argument can be utilized to justify the  equivalence   (a)-(c) using the fact that  in this case, $T_\ve$ can be replaced with $\sub f$; see the final part of  Proposition~\ref{proj1}. This completes the proof.
\end{proof}

The characterization of continuous differentiability of proximal mappings using a strict proto-differentiability assumption on the subgradient mapping as part (c) in  Theorem~\ref{chpr} was first developed by Poliquin and Rockafellar in \cite[Theorem~4.4]{pr2}.
The latter result, however, requires in the setting of Theorem~\ref{chpr} that $\ox$ be a global minimizer of $f$, a condition that was replaced in our result by prox-boundedness of $f$, which is more realistic. We should emphasis that 
it is very likely that the latter requirement on $\ox$ in \cite[Theorem~4.4]{pr2} can be dropped by inspecting carefully its proof. We, however, didn't proceed in that way and justify this result as an immediate consequence of the equivalence of
metric regularity and strong metric regularity for generalized equations under strict proto-differentiability in Theorem~\ref{smrch}. Note that while Theorem~\ref{chpr} presents not only a sufficient condition but a characterization of 
continuous differentiability of proximal mappings of prox-regular functions, it still requires more effort to disentangle the strict proto-differentiability assumption in part (c) therein. This was accomplished for polyhedral functions in \cite[Theorem~3.5]{HJS22} and a certain 
composite functions in \cite[Theorem~3.10]{HaS23} by showing that the latter strict proto-differentiability assumption amounts to a relative interior condition. While we will be pursuing a similar characterization for 
$\C^2$-decomposable functions in the next section, it remains as an open question whether a similar result can be justified for a prox-regular function in general. 

Note that continuous differentiability of the projection mapping to convex sets was studied by Holmes in \cite{hol} 
in Hilbert spaces. His main result, \cite[Theorem~2]{hol}, states 
that if $\Omega\subset \R^d$ is a closed convex set, $x\in \R^d$,  the boundary of  $\Omega$ is a ${\cal C}^2$ smooth manifold  around $y=P_\Omega(x)$, then the projection mapping $P_\Omega$ is ${\cal C}^1$ in a neighborhood of the open normal ray 
$\{y+t(x-y)|\; t>0\}$; see also \cite[Theorem~2.4]{cst} for an extension of Holmes' result for prox-regular sets.

%%%%%%%%%%%%%%%%%%%%%%%%%%%%%%%%%
\section{Chain Rule for Strict  Proto-Differentiability}\label{chain}
The final section of this paper is devoted to provide a simple and verifiable  characterization of strict proto-differentiability for a class of composite functions, which encompasses important examples of functions that we often encounter 
in different classes of constrained and composite optimization problems. To achieve this goal, 
 recall from \cite{sh03} that $g: \Y \to \oR$ is said to be $\C^2$-decomposable at $u \in \Y$ if $g(u)$ is finite and $g$ can be locally represented in the composite form
\begin{equation}\label{co}
g(u') = g(u) + \vartheta(\Xi(u'))\quad \textrm{ for }\quad u'\in \O,
\end{equation}
where $\O \subset \Y$ is an open neighborhood of the given point $u$, $\vartheta: \Z \to \oR$ is a proper, lsc, sublinear function, and $\Xi:\O\to \Z$ is a $\C^2$-smooth mapping with $\Xi(u) = 0$  and $\Z$ being a finite dimensional Hilbert space.
One can immediately conclude  from \cite[Definition~3.18 and Exercise~3.19]{rw} that $\vartheta(\Xi(u)) = 0$.  
Variational analysis of $\C^2$-decomposable functions often requires a constraint qualification. The most common condition, used for this purpose, is called the {\em nondegeneracy} condition.
 In what follows, we say that the nondegeneracy condition  is satisfied for a $\C^2$-decomposable function $g$ with representation in \eqref{co} at $u\in \Y$
 % with $\Xi(u)\in \dom \vartheta$ 
 if 
\begin{equation}\label{nondeg}
\para \{\partial \vartheta(\Xi(u))\} \cap \ker \nabla \Xi(u)^* = \{0\}
\end{equation}
holds, where $\para \{\partial \vartheta(\Xi(u))\} $  stands for the linear subspace of $\Z$ parallel to the affine hull of $ \sub \vartheta(\Xi(u))$. 
It is important to note that the nondegeneracy condition in \eqref{nondeg} holds automatically for many important examples of $\C^2$-decomposable functions; see \cite[Examples~2.1 and 2.3]{sh03}.
Since our analysis in this section  heavily relies on the nondegeneracy condition in \eqref{nondeg}, we call the function $g$ {\em reliably} $\C^2$-decomposable at $u \in \Y$ if both conditions \eqref{co} and \eqref{nondeg}
are satisfied concurrently. 

As shown in \cite[Example~2.4]{sh03}, the class of ${\cal C}^2$-{decomposable} functions
is a generalization of ${\cal C}^2$-{\em cone reducible} sets in the sense of \cite[Definition~3.135]{bs}, which is defined as follows:
A closed convex set $C\subset \Y$ is ${\cal C}^2$-{cone reducible} at $u\in C$ to a closed convex cone $\Th\subset \Z$ if there exist a neighborhood $\O\subset \Y$ of $u$ and a $\C^2$-smooth mapping $\Xi: \Y \to\Z$ such that 
\begin{equation}\label{codef}
C\cap\O =\big\{ u'\in \O\, \big|\, \Xi(u') \in \Theta\big\}, \quad \Xi(u) = 0, \; \textrm{ and }\quad \nabla \Xi(u): \Y \to \Z\; \textrm{ is surjective}.
\end{equation}
%Note that the   surjectivity condition in \eqref{codef}  is not  covered by \eqref{co} and has its own counterpart, called the {\em nondegeneracy} condition; see \eqref{nondeg}. 
Below, we provide some important examples of reliably ${\cal C}^2$-{decomposable} functions that often appear in constrained and composite optimization problems.
\begin{Example} \label{c2rex} Assume that $g: \Y \to \oR$ and $\ou\in \Y$ with $g(\ou)$ finite. 
\begin{enumerate}[noitemsep,topsep=0pt]
\item If $C$ is ${\cal C}^2$-{cone reducible} at $\ou\in C$, then $g=\dd_C$ is reliably ${\cal C}^2$-{decomposable} at $\ou$.
 It is known that polyhedral convex sets (cf. \cite[Example~3.139]{bs}), the second-order cone, and the cone of $n\times n$ positive semidefinite matrices, denoted by $\S^n_+$,  (cf. \cite[Example~3.140]{bs}) are ${\cal C}^2$-{cone reducible} at all of their points.
 
 \item If $g$ is a polyhedral function,    it was shown in \cite[Example~2.1]{sh03}  that $g$ is reliably ${\cal C}^2$-{decomposable} at any points of its domain. 
 Furthermore, if $\Phi:\X\to \Y$ is a ${\cal C}^2$-smooth mapping, $\ou:=\Phi(\ox)\in \dom g$ for some $\ox\in \X$,  and the condition 
\begin{equation}\label{socq}
\para \{\partial g(\Phi(\ox))\} \cap \ker \nabla \Phi(\ox)^* = \{0\}
\end{equation}
is satisfied, the composite function $f=g\circ \Phi$ is reliably ${\cal C}^2$-{decomposable} at $\ox$. To justify it, it follows from 
\cite[Proposition~2.1(b)]{HJS22} that there is a neighborhood $\O$ of $\ou$ for which we have 
\begin{equation}\label{c2r}
g(u)=g(\ou)+\vartheta(u-\ou)\quad \mbox{with}\;\; \vartheta:=\d g(\ou),
\end{equation}
which leads us to the representation 
$$
f(x)=f(\ox)+\vartheta(\Phi(x)-\ou)
$$
for all $x$ close to $\ox$. Moreover, we deduce from  \cite[Exercise~8.44(b)]{rw} that  $\sub \vartheta(0)\subset \sub g(\ou)$. It is not hard to see via \eqref{c2r} that 
the latter inclusion becomes equality, and thus we obtain $\para \{\partial g(\ou)\} =\para \{\partial \vartheta(0)\} $. 
Combining this with \eqref{socq} confirms  that  the nondegeneracy condition in \eqref{nondeg} is satisfied at $u=\ou$, and hence  proves that $f$ is reliably ${\cal C}^2$-{decomposable} at $\ox$. 

\item 
Given $i\in \{1,\ldots,n\}$ and $X\in \S^n$, denote by $\ell_i(X)$
the number of eigenvalues that are equal to $\lm_i(X)$ but are ranked before $i$ including   $\lm_i(X)$. 
In what follows,   we often drop $X$ from  $\ell_i (X)$ when the dependence of $\l_i$ on $X $ can be seen clearly from the context. 
 This integer allows us to locate $\lm_i$
in the group of the eigenvalues of  $X$ as follows:
$$
\lm_1(X)\ge \cdots \ge \lm_{i-\ell_i(X)}>\lm_{i-\ell_i+1}(X)= \cdots =\lm_i(X)\ge \cdots\ge \lm_n(X).
$$
The eigenvalue $\lm_{i-\ell_i(X)+1}(X)$, ranking first in the group of eigenvalues equal to $\lm_i(X)$,  is called a {\em leading}
eigenvalue of $X$. For any $i\in \{1,\ldots,n\}$, define now the function $\al_i:{\S}^n\to \R$ by 
\begin{equation*}\label{fal}
\al_i(X)= \lm_{i-\ell_i+1}(X)+ \cdots +\lm_i(X),\quad X\in \S^n.
\end{equation*}
According to \cite[Example~2.3]{sh03},  $\al_i$ is reliably ${\cal C}^2$-{decomposable} at any $X\in \S^n$. In particular, when $\lm_i(X)$ 
ranks first in a group of equal eigenvalues, meaning either $i=1$ or $\lm_{i-1}(X)>\lm_i(X)$ if $i>1$, 
the function $\al_i$ reduces to $\lm_i$. This tells us that all the leading eigenvalue functions  are 
 always reliably ${\cal C}^2$-{decomposable}. Note that, except the first leading eigenvalue function,  the other leading eigenvalue functions  are nonconvex functions, 
 hence they provide  examples of nonconvex functions satisfying the reliable ${\cal C}^2$-decomposability property.
Despite the latter fact, it was shown in \cite[Theorem~2.3]{t98} that all the leading eigenvalue functions are subdifferentially regular.

Given $i\in \{1,\ldots,n\}$, define the sum of the first $i$ largest eigenvalues of $X$ by   
\begin{equation*}\label{fsig}
g_i(X)= \lm_{1}(X)+ \cdots +\lm_i(X).
\end{equation*}
It follows from \cite[Exercise~2.54]{rw} that  $g_i$ is convex. Observe also that   $g_i(X)=\al_i(X)+g_{i-\ell_i}(X)$.
By \cite[Proposition~1.3]{t98},  $g_{i-\ell_i}$ is ${\cal C}^2$-smooth on ${\S}^n$. This, coupled with \cite[Remark~2.2]{sh03}, demonstrates that 
$g_i$ is  reliably ${\cal C}^2$-{decomposable} at any $X\in \S^n$.  The later can be  extended for singular values of a matrix; see  \cite[Example~5.3.18]{mi} for more details.  
The readers can find more examples of ${\cal C}^2$-{decomposable} functions in \cite[Section~5.3.3]{mi}.
\end{enumerate}

\end{Example}

Below, we record some properties of sublinear functions, which is often used in this section.
\begin{Proposition}\label{subl}
Assume that $\vartheta: \Z \to \oR$ is a proper, lsc, and sublinear function and $\oz=0$. Then the following properties are fulfilled.
\begin{enumerate}[noitemsep,topsep=0pt]
 
\item  For any $z\in \dom \vt$, we have
\begin{equation*}%\label{a}
\vartheta(z)=\sup\big\{\la \eta,z\ra |\, \eta\in \sub\vartheta(\oz)\big\}=:\sigma_{\partial \vartheta(\oz)}(z)\quad \mbox{and}\quad \sub \vartheta(z)=\argmax\big\{\la \eta,z\ra |\, \eta\in \sub\vartheta(\oz)\big\}.
\end{equation*}

\item For any $z\in \dom \vt$, the inclusion $\emptyset\neq\sub \vt(z)\subset \sub \vt(\oz)$ holds.

\item For any $w\in \dom \vt$, we have $\d \vartheta(\oz)(w)=\vartheta(w)$. 
\end{enumerate}
\end{Proposition}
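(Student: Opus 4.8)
The plan is to reduce the whole statement to the classical fact that a proper, lsc, sublinear function is the support function of the nonempty closed convex set $C:=\sub\vt(\oz)$ (here $\oz=0$). First I would verify that $\vt^*=\dd_C$ with $C=\{\eta\in\Z\mid\la\eta,z'\ra\le\vt(z')\ \text{for all}\ z'\in\Z\}$: if $\la\eta,z_0\ra>\vt(z_0)$ for some $z_0$, positive homogeneity forces $\la\eta,tz_0\ra-\vt(tz_0)=t\big(\la\eta,z_0\ra-\vt(z_0)\big)\to+\infty$, so $\vt^*(\eta)=+\infty$; otherwise $\la\eta,\cdot\ra-\vt(\cdot)\le 0$ with equality at $\oz=0$, so $\vt^*(\eta)=0$. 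Since $\vt$ is proper, lsc, and convex, $\vt^*=\dd_C$ is proper, so $C$ is nonempty (and it is closed convex, being an intersection of halfspaces), and the Fenchel--Moreau theorem gives $\vt=\vt^{**}=(\dd_C)^*=\sigma_C$, i.e. $\vt(z)=\sup\{\la\eta,z\ra\mid\eta\in\sub\vt(\oz)\}$ for every $z$, the supremum being finite precisely for $z\in\dom\vt$. This is the first equality in (a); alternatively one may simply cite the corresponding representation result for sublinear/support functions from \cite{rw}.

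For the subgradient formula in (a) I would combine this representation with the Fenchel--Young equality: $v\in\sub\vt(z)$ if and only if $\vt(z)+\vt^*(v)=\la v,z\ra$; with $\vt^*=\dd_C$ and $\vt(z)=\sigma_C(z)$ this is equivalent to $v\in C$ together with $\la v,z\ra=\sigma_C(z)=\sup_{\eta\in C}\la\eta,z\ra$, that is, $v\in\argmax\{\la\eta,z\ra\mid\eta\in\sub\vt(\oz)\}$. Hence $\sub\vt(z)=\argmax\{\la\eta,z\ra\mid\eta\in\sub\vt(\oz)\}$, which is (a). Part (b) is then read off from this: every $v\in\sub\vt(z)$ belongs to $C=\sub\vt(\oz)$, giving the inclusion $\sub\vt(z)\subset\sub\vt(\oz)$, while $\sub\vt(z)\ne\emptyset$ for $z\in\dom\vt$ is exactly the assertion that the finite supremum $\sigma_C(z)$ is attained on the closed convex set $C$, which I would take from the standard subdifferentiability property of sublinear functions on their domains (see \cite{rw}).

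Part (c) does not require the support-function machinery, only positive homogeneity and lower semicontinuity. Since $\vt(\oz)=\vt(0)=0$ and $\vt(tw')=t\,\vt(w')$ for $t>0$, the first-order difference quotient at $\oz$ equals $\big(\vt(\oz+tw')-\vt(\oz)\big)/t=\vt(w')$, independently of $t$; therefore
\[
\d\vt(\oz)(w)=\liminf_{t\searrow 0,\, w'\to w}\frac{\vt(\oz+tw')-\vt(\oz)}{t}=\liminf_{w'\to w}\vt(w')=\vt(w),
\]
where the last equality holds because $\vt$ is lsc, so $\liminf_{w'\to w}\vt(w')\ge\vt(w)$, while the choice $w'\equiv w$ gives the opposite inequality. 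This is valid for every $w$, in particular on $\dom\vt$.

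I do not expect a genuine obstacle here. The single step carrying all the content is the support-function representation $\vt=\sigma_{\sub\vt(\oz)}$ of the first paragraph, which I would either re-derive through the conjugacy calculus as sketched or simply cite; once it is in hand, the subgradient formula in (a), the inclusion in (b), and the identity in (c) are all routine. The only point warranting a little care is the attainment fact underlying the nonemptiness of $\sub\vt(z)$ in (b), for which I would rely on the cited property of sublinear functions rather than argue it from scratch.
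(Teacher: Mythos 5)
Your argument follows essentially the same route as the paper's: both rest on the support-function representation $\vt=\sigma_{\sub\vt(\oz)}$ of a proper lsc sublinear function and the resulting argmax description of $\sub\vt(z)$ (the paper simply cites \cite[Theorem~8.24 and Corollary~8.25]{rw} for what you re-derive through conjugacy), and part (c) is handled in both cases by positive homogeneity of $\vt$ together with $\vt(\oz)=0$. Your conjugacy computation of $\vt^*=\dd_{\sub\vt(\oz)}$, the Fenchel--Young identification of $\sub\vt(z)$ with the argmax set, and the treatment of (c) are all correct.

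The one place where your write-up is thinner than it should be is exactly the point you flag: the nonemptiness of $\sub\vt(z)$ for $z\in\dom\vt$ in (b). This is not covered by a generic ``subdifferentiability of convex functions on their domains'': for a proper lsc convex function the subdifferential is guaranteed nonempty only on the relative interior of the domain, and for a support function $\sigma_C$ with $C$ unbounded the supremum over $C$ need not be attained at boundary points of $\dom\sigma_C$. So the attainment claim is the genuinely load-bearing step of (b) and requires an argument specific to the sublinear setting. The paper supplies one via the horizon-cone identity $\cl(\dom\vt)=(\sub\vt(\oz)^\infty)^*$ from \cite[Theorem~8.24]{rw}, which it uses to reduce the maximization over the possibly unbounded set $\sub\vt(\oz)$ to one over a closed bounded convex subset when $z\in\dom\vt$, whence a maximizer exists. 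You should either reproduce that reduction or cite a statement that explicitly asserts attainment, rather than appeal to a ``standard'' property that fails for general convex functions at boundary points of their domains.
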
 
\begin{proof}
Part (a) follows  from  \cite[Theorem~8.24]{rw} and  \cite[Corollary~8.25]{rw}.
Recall also from \cite[Theorem~8.24]{rw} that $\cl(\dom \vt) = (\sub \vt(\oz)^\infty)^*$, where $\sub \vt(\oz)^\infty$ stands for the horizon cone of $\sub \vt(\oz)$ (cf. \cite[Definition~3.3]{rw}).
We then conclude that the probably unbounded  set $\sub\vt(\oz)$ in the maximization problem in part (a) can be replaced with a closed bounded convex subset provided that $z\in \dom \vt$.
Thus, a maximizer must exist and $\sub \vt (z)\neq \emptyset$ for all $z\in \dom \vt$.
 The claimed inclusion in (b) results from the the second equality in (a). Finally, (c) 
follows from the fact that $\oz=0$ and $\vartheta$ is positively homogeneous. 
\end{proof}

Suppose that $g: \Y\to \oR$ is  $\C^2$-decomposable at $u\in \Y$ with representation in \eqref{co}. 
 Given $(u, y) \in \gph \partial g$, we define the set  of Lagrange multipliers associated with  $(u,y)$  by
\begin{equation}\label{laset}
M(u,y):=\big\{ \mu\in\Z\;\big|\;   \nabla \Xi(u)^* \mu=y,\; \mu\in \sub \vartheta(\Xi(u))\big\}.
\end{equation}
The next result collects some simple consequences of the nondegeneracy condition in \eqref{nondeg}, important for  our analysis of reliably $\C^2$-decomposable  functions in this section.

\begin{Proposition}\label{prop:nondeg}
Assume that  $g: \Y\to \oR$ is reliably $\C^2$-decomposable at $\ou\in \Y$ satisfying the representation in \eqref{co} for $u=\ou$. Then, the following properties hold.
\begin{enumerate}[noitemsep,topsep=2pt]
\item There is a neighborhood $U\subset \O$ of $\ou$ such that for any  $u\in U\cap  \dom g$,  the nondegeneracy condition in \eqref{nondeg} and the following basic constraint qualification {\rm(}BCQ{\rm)}  
\begin{equation}\label{bcq}
N_{\dom \vartheta}(\Xi(u)) \cap \ker \nabla \Xi(u)^* = \{0\}
\end{equation}
hold. Moreover,  $g$ is   prox-regular and subdifferentially continuous at any $u\in U\cap  \dom g$ for any $y\in \sub g(u)$ and  is 
subdifferentially regular at any such $u$.

\item The Lagrange multiplier set $M(u, y)$ from \eqref{laset} is a singleton for any  $u\in U\cap  \dom g$ and any $y\in \sub g(u)$, where $U$ is taken from {\rm(}a{\rm)}. 
Moreover, the dual condition 
\begin{equation}\label{duq}
K_\vartheta(\Xi(u),  \mu)^*\cap\ker\nabla\Xi(u)^*=\{0\}
\end{equation}
and the equivalence
\begin{equation}\label{rint}
 y \in \ri \sub g(u)\; \Longleftrightarrow\; \mu \in \ri  \sub \vt(\Xi(u))
\end{equation}
are satisfied for any $u\in U\cap  \dom g$, where $\mu$ is a unique element in $M(u, y)$  for any $y\in \sub g(u)$.

\item The set-valued mapping $\widetilde\Xi:  \Y \tto  \Z$, defined by $\widetilde\Xi (u):=\Xi(u) + L^\perp$ with $L:= \para \{\partial \vartheta(\Xi(\ou))\}$, is metrically regular at $\ou$ for $\Xi(\ou)$.
\end{enumerate}
\end{Proposition}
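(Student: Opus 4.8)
I would prove (a), (b), (c) in this order: the constraint qualifications obtained in (a) on a neighborhood of $\ou$ are precisely what drives (b), whereas (c) follows directly from the nondegeneracy hypothesis through a coderivative computation. Throughout, set $L:=\para\{\partial\vartheta(\Xi(\ou))\}=\para\{\partial\vartheta(0)\}$. The step I expect to be the main obstacle is (a) --- specifically, showing that the \emph{pointwise} nondegeneracy condition propagates to a whole neighborhood; the two supporting inclusions $\para\{\partial\vartheta(z)\}\subset L$ and $N_{\dom\vartheta}(z)\subset L$ for $z\in\dom\vartheta$ are the only genuinely non-routine points, after which (b) and (c) are bookkeeping with standard chain rules and the coderivative criterion.

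For (a), I would first establish those two inclusions. The first holds because, by Proposition~\ref{subl}(a), $\partial\vartheta(z)$ is the set of maximizers of a linear functional over the convex set $\partial\vartheta(0)$, hence a face of it, so $\aff\partial\vartheta(z)\subset\aff\partial\vartheta(0)$. For the second, sublinearity makes $\dom\vartheta$ a convex cone containing $0$, so $N_{\dom\vartheta}(z)\subset N_{\dom\vartheta}(0)=(\dom\vartheta)^{*}$; since $\vartheta=\sigma_{\partial\vartheta(0)}$ and $\partial\vartheta(0)$ is a translate of a subset of $L$, the function $\vartheta$ is finite on $L^{\perp}$, giving $L^{\perp}\subset\dom\vartheta$ and hence $(\dom\vartheta)^{*}\subset (L^{\perp})^{*}=L$. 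Next, the hypothesis $L\cap\ker\nabla\Xi(\ou)^{*}=\{0\}$ says $\nabla\Xi(\ou)^{*}\vert_{L}$ is injective, an open property; since $\Xi$ is $\C^{2}$ we have $\nabla\Xi(u)^{*}\to\nabla\Xi(\ou)^{*}$, so $L\cap\ker\nabla\Xi(u)^{*}=\{0\}$ on some neighborhood $U$ of $\ou$. Intersecting the two inclusions above with $\ker\nabla\Xi(u)^{*}$ then yields \eqref{nondeg} and \eqref{bcq} for every $u\in U\cap\dom g$. Finally, with \eqref{bcq} in force, $g$ coincides up to an additive constant near each such $u$ with the strongly amenable function $\vartheta\circ\Xi$ in the sense of \cite[Definition~10.23]{rw}; hence $g$ is prox-regular and subdifferentially continuous there by \cite[Proposition~13.32]{rw} and subdifferentially regular there by the chain rule \cite[Theorem~10.6]{rw}.

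For (b), the same chain rule \cite[Theorem~10.6]{rw} under \eqref{bcq} gives $\partial g(u)=\nabla\Xi(u)^{*}\partial\vartheta(\Xi(u))$ on $U\cap\dom g$, so $M(u,y)\neq\emptyset$ whenever $y\in\partial g(u)$; and if $\mu_{1},\mu_{2}\in M(u,y)$ then $\mu_{1}-\mu_{2}\in\para\{\partial\vartheta(\Xi(u))\}\cap\ker\nabla\Xi(u)^{*}=\{0\}$ by \eqref{nondeg}, so $M(u,y)=\{\mu\}$ is a singleton. For the dual condition \eqref{duq}, convexity (hence subdifferential regularity) of $\vartheta$ together with Proposition~\ref{proxcri}(b) gives $K_{\vartheta}(\Xi(u),\mu)=N_{\partial\vartheta(\Xi(u))}(\mu)$, so $K_{\vartheta}(\Xi(u),\mu)^{*}=\cl\cone\big(\partial\vartheta(\Xi(u))-\mu\big)\subset\para\{\partial\vartheta(\Xi(u))\}$; intersecting with $\ker\nabla\Xi(u)^{*}$ and invoking \eqref{nondeg} once more yields \eqref{duq}. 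For the equivalence \eqref{rint}, I would use that relative interior commutes with linear images (cf. \cite{mn}), so $\ri\partial g(u)=\nabla\Xi(u)^{*}\big(\ri\partial\vartheta(\Xi(u))\big)$: the direction ``$\Leftarrow$'' of \eqref{rint} is then immediate, and for ``$\Rightarrow$'' any $\tilde\mu\in\ri\partial\vartheta(\Xi(u))$ with $\nabla\Xi(u)^{*}\tilde\mu=y$ lies in $M(u,y)=\{\mu\}$, forcing $\tilde\mu=\mu$.

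For (c), I would set $\Phi(u,z):=P_{L}\big(\Xi(u)-z\big)$, a $\C^{2}$ mapping into $L$ with $\gph\widetilde\Xi=\Phi^{-1}(0)$ locally closed around $(\ou,0)$, and observe that $\nabla\Phi(\ou,0)$ is onto $L$ (move only in the $z$-variable). The normal-cone formula for $\C^{1}$-smooth constraint systems \cite[Theorem~6.14]{rw} then gives $N_{\gph\widetilde\Xi}(\ou,0)=\{(\nabla\Xi(\ou)^{*}\eta,-\eta)\mid\eta\in L\}$, so by \eqref{coder} one reads off $D^{*}\widetilde\Xi(\ou,0)(w)=\{\nabla\Xi(\ou)^{*}w\}$ if $w\in L$ and $=\emptyset$ otherwise; hence $D^{*}\widetilde\Xi(\ou,0)^{-1}(0)=L\cap\ker\nabla\Xi(\ou)^{*}=\{0\}$ by the nondegeneracy hypothesis. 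The coderivative criterion for metric regularity \cite[Theorem~3.3(ii)]{Mor18} then yields that $\widetilde\Xi$ is metrically regular at $\ou$ for $\Xi(\ou)=0$, completing the proof.
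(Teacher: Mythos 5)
Your proof is correct and follows essentially the same route as the paper's: openness of the injectivity of $\nabla\Xi(u)^*\vert_{L}$ combined with the monotonicity $\partial\vartheta(\Xi(u))\subset\partial\vartheta(0)$ for (a), the chain rule under the BCQ plus nondegeneracy for existence and uniqueness of multipliers and the relative-interior image formula in (b), and the Mordukhovich coderivative criterion in (c). The only differences are cosmetic: you get $N_{\dom\vartheta}(\Xi(u))\subset L$ for every nearby $u$ directly from finiteness of $\sigma_{\partial\vartheta(0)}$ on $L^\perp$ (the paper instead proves $(\dom\vartheta)^*\subset K_\vartheta(0,\mu)^*=T_{\partial\vartheta(0)}(\mu)\subset L$ at $\ou$ and then appeals to robustness of the normal cone to propagate the BCQ), and in (c) you compute $D^*\widetilde\Xi$ via a smooth constraint-system representation of $\gph\widetilde\Xi$ rather than the coderivative sum rule with the constant mapping $u\mapsto L^\perp$ — both give the same formula and the same conclusion.
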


\begin{proof}
To prove  (a), we begin by justifying the nondegeneracy condition in \eqref{nondeg} for any $u\in \dom g$ sufficiently close to $\ou$.
Observe that the condition \eqref{nondeg} at $u=\ou$ yields 
\begin{equation}\label{co8}
\para \{\partial \vartheta(\Xi(\ou))\} \cap \ker \nabla \Xi(u)^* = \{0\}
\end{equation} 
for all $u$ sufficiently close to $\ou$. 
Since $\Xi(\ou) = 0$, it follows from  Proposition~\ref{subl}(b) that  $\emptyset\neq\partial \vartheta(\Xi(u)) \subset \partial \vartheta(\Xi(\ou))$ for all $u$ with $\Xi(u)\in \dom\vt$, which then tells us that the nondegeneracy condition \eqref{nondeg} also holds for $u\in \dom g$ sufficiently close to $\ou$ such that \eqref{co8} is fulfilled. 
Thus, we find a neighborhood $U$ of $\ou$ such that for any 
 $u\in U\cap  \dom g$,  the nondegeneracy condition in \eqref{bcq} is satisfied. 
We are now going to show that the BCQ condition \eqref{bcq} holds at $u=\ou$. 
To do so, we claim that $N_{\dom \vartheta}(\Xi(\ou)) \subset \para \{\partial \vartheta(\Xi(\ou))\}$. 
Since $\Xi(\ou) = 0$,  it follows from  Proposition~\ref{subl}(c) that  $\d \vartheta(\Xi(\ou))(w) = \vartheta(w)$ for all $w\in \Z$, which in turn implies $K_\vartheta(\Xi(\ou), \mu) \subset \dom \vartheta$, where $\mu$ is taken arbitrarily from $\partial \vartheta(\Xi(\ou))$. 
Since  $\vartheta$ is sublinear,  $\dom \vartheta$ is a convex cone. Thus, we arrive at
\begin{equation}\label{crin}
N_{\dom \vartheta}(\Xi(\ou)) = (\dom \vartheta)^* \subset K_\vartheta(\Xi(\ou),  \mu)^* = T_{\partial \vartheta(\Xi(\ou))}( \mu) \subset \para \{\partial \vartheta(\Xi(\ou))\},
\end{equation}
which confirms our claim.  
We then conclude from \eqref{nondeg} for $u=\ou$ that the BCQ condition \eqref{bcq} holds at that point.
Shrinking the neighborhood $U$ of $\ou$, if necessary, and employing robustness of the normal cone mapping $N_{\dom \vartheta}$ and $\C^2$-smoothness of $\Xi$, we can argue further that the BCQ condition \eqref{bcq} is valid for all $u\in U\cap \dom g$.
%proving  the nondegeneracy condition in \eqref{nondeg} for any $u\in \dom g$ sufficiently close to $\ou$.
%If this fails, we find for any $k$ a vector $u^k\in  \dom g$ and $\mu^k\in \para \{\partial \vartheta(\Xi(u^k))\} \cap \ker \nabla \Xi(u^k)^* $ such that 
%$u^k\to \ou$ and $\mu^k\neq 0$ for all $k$ sufficiently large. If follows from  \cite[Corollary~8.25]{rw} and $\Xi(\ou)=0$ that $\partial \vartheta(\Xi(u^k)) \subset \partial \vartheta(\Xi(\ou))$.
%Passing to a subsequence, if necessary, we can assume that the sequence $\{\mu^k/\|\mu^k\|\b$ converges to some $\xi\in \Z$ with $\|\xi\|=1$. Combining these and passing 
%to the limit, we arrive at $\xi\in \para \{\partial \vartheta(\Xi(\ou))\} \cap \ker \nabla \Xi(\ou)^*$, a contradiction. 
%Shrinking the neighborhood $U$, if necessary, and using a similar argument as the one for the nondegeneracy condition above, we can conclude from \eqref{nondeg} that the BCQ condition \eqref{bcq} holds at any 
%$u \in U\cap   \dom g$. 
Finally,  the composite function $g$ in \eqref{co}, satisfying the nondegeneracy condition in \eqref{nondeg},  is strongly amenable at any $u\in U\cap\dom g$  in the sense of \cite[Definition~10.23(a)]{rw} and therefore is prox-regular and subdifferentially continuous at all points $u\in U\cap  \dom g$ for any $y\in \sub g(u)$ according to \cite[Theorem~13.32]{rw}. The subdifferential regularity of $g$ at any such $u$ falls out of \cite[Exercise~10.25(a)]{rw},  which completes the proof of (a). 

Turning now to (b), take the neighborhood  $U$ from (a) and pick $u\in U \cap \dom g$ and $y\in \sub g(u)$.  It follows from \cite[Example~10.8]{rw}, the composite representation \eqref{co}, and \eqref{bcq} that
\begin{equation}\label{subg}
\sub g (u) = \nabla \Xi(u)^*\sub \vt(\Xi(u)).
\end{equation}
Since $y\in \sub g (u)$, the latter implies that $M(u, y)$ is nonempty.
We claim  that $M(u, y)$ is a singleton. 
Indeed, assuming that $ \mu^1,  \mu^2 \in M(u, y)$, we get from \eqref{laset} that $ \mu^1 -  \mu^2 \in \ker\nabla \Xi(u)^*$ 
and $ \mu^1 -  \mu^2 \in \para \{\partial \vartheta(\Xi(u))\}$. Employing the nondegeneracy condition in  \eqref{nondeg}, we arrive at  $ \mu^1 =  \mu^2$, 
which proves our claim. The dual condition in \eqref{duq} then follows from the inclusion in \eqref{crin} together with the validity of the nondegeneracy condition at 
any $u\in U\cap  \dom g$, which was established in (a). Regarding the equivalence in \eqref{rint}, by \cite[Proposition~2.44]{rw}, it is a consequence of \eqref{subg}.

Finally, to justify (c), define the set-valued mapping  $F:  \Y \tto \Z$ by $F(u)=L^\bot$. Observe that $\gph F = \Y \times L^\perp$.
Using the definition of coderivative from \eqref{coder}, we  obtain  $D^*F(\ou, 0)(w) = \{0\}$ for $w\in L$ and $D^*F(\ou, 0)(w) = \emptyset$ otherwise. 
Since $\widetilde\Xi (u)=\Xi(u) +F(u)$ for all $u\in \Y$, using  the sum rule for the coderivative from \cite[Exercise~10.43(b)]{rw} yields 
\begin{equation}\label{cod1}
D^*\widetilde\Xi(\ou, \Xi(\ou))(w) =
\nabla\Xi(\ou)^*w + D^*F(\ou, 0)(w)= \nabla\Xi(\ou)^*w\quad \mbox{for all}\;\; w \in L.
\end{equation}
By \cite[Theorem~9.43]{rw}, the mapping $\widetilde\Xi$ is metrically regular at $\ou$ for $\Xi(\ou)$ if and only if $0\in D^*\widetilde\Xi(\ou, \Xi(\ou))(w)$ yields $w=0$.
According to \eqref{cod1}, the condition $0\in D^*\widetilde\Xi(\ou, \Xi(\ou))(w)$ amounts to $w\in \para \{\partial \vartheta(\Xi(\ou))\} \cap \ker \nabla \Xi(\ou)^*$.
Since the nondegeneracy condition in \eqref{nondeg} holds at $u=\ou$, we obtain $w=0$, which proves (c) and hence completes the proof.
\end{proof}

We proceed with a chain rule for the second subderivative of reliably $\C^2$-decomposable functions.

%%%%%%%%%%%%%%%%%%%%%%%%%%%%%%%%%%
\begin{Theorem} \label{prop:tedg}
Assume that  $g: \Y\to \oR$ is reliably $\C^2$-decomposable at $\ou\in \Y$ satisfying the  representation in  \eqref{co} for $u=\ou$.
Then,  there is a neighborhood $U$ of $\ou$ such that for any  $u\in U\cap  \dom g$ and any $y\in \sub g(u)$,   the second subderivative of $g$ at $u$ for $y$ can be calculated by
\begin{equation}\label{co3}
\d^2g (u, y)(w) = \la \mu, \nabla^2 \Xi(u)(w, w)\ra +\d^2\vartheta(\Xi(u), \mu)(\nabla\Xi(u)w), \quad \textrm{ for }\; w\in \Y,
\end{equation}
 where $\mu\in \Z$ is the unique element in the multiplier set $M(u, y)$ from \eqref{laset}.  
\end{Theorem}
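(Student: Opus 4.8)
The plan is to route the computation through the \emph{parabolic} subderivative of $g$: first establish a chain rule for $\d^2 g(u)(\cdot\verl\cdot)$ from the composite representation \eqref{co}, and then pass back to $\d^2 g(u,y)$ by invoking parabolic regularity. I would begin by invoking Proposition~\ref{prop:nondeg} to fix a neighborhood $U$ of $\ou$ on which $g$ is strongly amenable --- hence prox-regular, subdifferentially continuous, and, by \cite[Theorem~6.2]{HaS23}, parabolically epi-differentiable --- subdifferentially regular, the nondegeneracy condition \eqref{nondeg} and the BCQ \eqref{bcq} hold, and the multiplier set $M(u,y)$ reduces to a singleton $\{\mu\}$ for every $u\in U\cap\dom g$ and $y\in\sub g(u)$. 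Fixing such a triple $(u,y,\mu)$, I would record $\nabla\Xi(u)^*\mu=y$ and $\mu\in\sub\vartheta(\Xi(u))\subset\sub\vartheta(0)$, write $z:=\Xi(u)$ and $\xi:=\nabla\Xi(u)w$, and use the first-order chain rule together with \eqref{subg} to see that $w\in K_g(u,y)$ if and only if $\xi\in K_\vartheta(z,\mu)$; since $\dom\d^2 g(u,y)\subset K_g(u,y)$ and $\dom\d^2\vartheta(z,\mu)\subset K_\vartheta(z,\mu)$ by Proposition~\ref{proxcri}, formula \eqref{co3} is trivial (both sides $=+\infty$) when $w\notin K_g(u,y)$, so I may assume $w\in K_g(u,y)$.

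Next I would prove the parabolic chain rule. Combining \eqref{co} with the Taylor expansion $\Xi(u+tw+\tfrac12 t^2\rho)=z+t\xi+\tfrac12 t^2\big(\nabla\Xi(u)\rho+\nabla^2\Xi(u)(w,w)\big)+o(t^2)$ and the first-order relation $\d g(u)(w)=\d\vartheta(z)(\xi)$, one obtains, for $\rho\in\Y$,
\begin{equation*}
\d^2 g(u)(w\verl\rho)=\d^2\vartheta(z)\big(\xi\verl\nabla\Xi(u)\rho+\nabla^2\Xi(u)(w,w)\big),
\end{equation*}
where the nondegeneracy condition --- equivalently, the metric regularity of $\widetilde\Xi$ from Proposition~\ref{prop:nondeg}(c) --- is what keeps the $o(t^2)$ remainder and the composite structure from corrupting the epigraphical limits on both sides. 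Since $\vartheta$ is sublinear, hence convex, it is parabolically regular, and --- using the nondegeneracy condition and the parabolic epi-differentiability --- so is $g$; hence for $w\in K_g(u,y)$ one has the parabolic-regularity identities $\d^2 g(u,y)(w)=\inf_{\rho\in\Y}\{\d^2 g(u)(w\verl\rho)-\la y,\rho\ra\}$ and $\d^2\vartheta(z,\mu)(\xi)=\inf_{\zeta\in\Z}\{\d^2\vartheta(z)(\xi\verl\zeta)-\la\mu,\zeta\ra\}$. Substituting the parabolic chain rule into the first identity, using $\la y,\rho\ra=\la\mu,\nabla\Xi(u)\rho\ra$, and changing variables via $\zeta=\nabla\Xi(u)\rho+\nabla^2\Xi(u)(w,w)$ yields
\begin{equation*}
\d^2 g(u,y)(w)=\la\mu,\nabla^2\Xi(u)(w,w)\ra+\inf_{\zeta\in\,\nabla^2\Xi(u)(w,w)+\rge\nabla\Xi(u)}\big\{\d^2\vartheta(z)(\xi\verl\zeta)-\la\mu,\zeta\ra\big\}.
\end{equation*}

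It remains to identify this last infimum with $\d^2\vartheta(z,\mu)(\xi)$. By Proposition~\ref{subl}(a), $\vartheta$ is the support function of $\sub\vartheta(0)$, which lies in an affine subspace parallel to $L:=\para\{\sub\vartheta(0)\}$; hence $\vartheta(\cdot+s)=\vartheta(\cdot)+\la\mu,s\ra$ for every $s\in L^\perp$ (using $\mu\in\sub\vartheta(0)$), and consequently $\d^2\vartheta(z)(\xi\verl\zeta+s)-\la\mu,\zeta+s\ra=\d^2\vartheta(z)(\xi\verl\zeta)-\la\mu,\zeta\ra$ for all $s\in L^\perp$, i.e.\ the objective above is constant along $L^\perp$. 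On the other hand, the nondegeneracy condition \eqref{nondeg} at $u$ --- valid on $U$ by Proposition~\ref{prop:nondeg}(a) --- reads $L\cap\ker\nabla\Xi(u)^*=\{0\}$, which by orthogonal complementation is equivalent to $\rge\nabla\Xi(u)+L^\perp=\Z$. Therefore $\nabla^2\Xi(u)(w,w)+\rge\nabla\Xi(u)+L^\perp=\Z$, so the constrained infimum coincides with the unconstrained one over $\zeta\in\Z$, which equals $\d^2\vartheta(z,\mu)(\xi)$ by the parabolic regularity of $\vartheta$; this is \eqref{co3}. The main obstacle, I expect, is making the two parabolic-regularity steps fully rigorous --- in particular the hard inequality of parabolic regularity for the composite $g$ and the justification of the parabolic chain rule under the nondegeneracy condition alone (rather than surjectivity of $\nabla\Xi(u)$) --- together with checking that the neighborhood $U$ from Proposition~\ref{prop:nondeg} serves uniformly for all $(u,y)$.
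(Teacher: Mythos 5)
Your proposal takes a genuinely different route from the paper: you go through parabolic subderivatives and parabolic regularity, whereas the paper argues directly, getting the lower estimate in \eqref{co3} from the BCQ via \cite[Theorem~13.14]{rw} and the upper estimate by an explicit sequence construction. Interestingly, the two key tools are the same in both arguments --- the metric regularity of $\widetilde\Xi=\Xi+L^\perp$ from Proposition~\ref{prop:nondeg}(c), and the invariance $\vartheta(\cdot+\nu)=\vartheta(\cdot)+\la\mu,\nu\ra$ for $\nu\in L^\perp$ coming from the support-function representation in Proposition~\ref{subl}(a) --- so your parabolic chain rule and the passage from the constrained to the unconstrained infimum are both salvageable. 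The paper simply applies these tools to the second-order difference quotients of $g$ and $\vartheta$ directly, which keeps the argument self-contained and, importantly, avoids the issue that chain rules of the \cite[Theorem~5.4]{ms20} type are anchored at a single point (the representation \eqref{co} is centered at $\ou$, so at nearby $u$ one has $\Xi(u)\neq 0$ and cannot re-anchor without changing $\Xi$); working with the fixed $\Xi$ and $z=\Xi(u)\neq 0$ as you do is the right move, and the paper does the same.

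There is, however, a genuine gap at the pivot of your argument: the claim that $\vartheta$ is parabolically regular at $z=\Xi(u)$ for $\mu$ ``since $\vartheta$ is sublinear, hence convex.'' Convexity does not imply parabolic regularity --- this is precisely why \cite{mms,ms20} devote substantial effort to establishing parabolic regularity for specific classes of (convex) functions, and classical examples of convex sets failing outer second-order regularity translate into convex functions failing parabolic regularity. Without parabolic regularity of $\vartheta$ at $z$ for $\mu$ (and the derived parabolic regularity of $g$ at $u$ for $y$), the two identities $\d^2 g(u,y)(w)=\inf_{\rho}\{\d^2 g(u)(w\verl\rho)-\la y,\rho\ra\}$ and $\d^2\vartheta(z,\mu)(\xi)=\inf_{\zeta}\{\d^2\vartheta(z)(\xi\verl\zeta)-\la\mu,\zeta\ra\}$ are only the one-sided inequalities ``$\le$'' guaranteed by \cite[Proposition~13.64]{rw}, and your chain of equalities degenerates into inequalities pointing the wrong way to conclude \eqref{co3}. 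You flag the ``hard inequality of parabolic regularity for the composite $g$'' as an expected obstacle, but it is not merely technical: it is the missing content, and it is not supplied by sublinearity alone (the paper's own Theorem~\ref{septheta} computes $\d^2\vartheta$ only at the origin, where the situation is special). To close the gap you would either have to prove parabolic regularity of an arbitrary proper lsc sublinear function at every point of its domain --- a nontrivial claim requiring its own argument --- or abandon the parabolic detour and estimate the second-order difference quotients of $g$ against those of $\vartheta$ directly, as the paper does.
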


\begin{proof} Take the neighborhood $U$ from Proposition~\ref{prop:nondeg}(a) and pick $u\in U\cap  \dom g$ and $y\in \sub g(u)$. 
By Proposition~\ref{prop:nondeg}(b), we know that the multiplier set $M(u, y)$ is a singleton, say $M(u,y)=\{\mu\}$. 
It follows from Proposition~\ref{prop:nondeg}(a) that the BCQ in \eqref{bcq} holds, which, coupled with   \cite[Theorem~13.14]{rw}, gives us  the inequality  
\begin{equation}\label{co4}
\d^2g (u, y)(w)\geq \la  \mu, \nabla^2 \Xi(u)(w, w)\ra +\d^2\vartheta(\Xi(u),  \mu)(\nabla \Xi(u)w)\quad \mbox{for all}\;\; w\in \Y.
\end{equation}
To obtain the opposite inequality, 
%observe that if $\nabla \Xi(u)w \notin \dom \d^2\vartheta(\Xi(u),  \mu)$, then \eqref{co4} becomes equality, since both sides are $\infty$ in this case. 
%Suppose now that $\nabla \Xi(u)w \in \dom \d^2\vartheta(\Xi(u),  \mu)$ 
fix $w\in \Y$ and pick sequences $t_k\searrow 0$ and $\xi^k \to \nabla \Xi(u)w$ as $k\to \infty$, satisfying
\begin{equation}\label{co4.1}
\d^2\vartheta(\Xi(u),  \mu)(\nabla \Xi(u)w) = \lim_{k\to\infty}\frac{\vartheta(\Xi(u)+t_k\xi^k)-\vartheta(\Xi(u))-t_k\la  \mu, \xi^k\ra}{\frac{1}{2}t_k^2}.
\end{equation}
We know from the nondegeneracy condition in \eqref{nondeg} that  the set-valued mapping $\widetilde\Xi:  \Y \rightrightarrows \Z$, defined in Proposition~\ref{prop:nondeg}(c), 
is metrically regular at $\ou$ for $\Xi(\ou)$, meaning 
that there  exist $\ell\geq 0$ and  neighborhoods $V$ of $\ou$ and $W$ of $\Xi(\ou)$ for which 
the estimate 
$$
\dist(p,\widetilde\Xi^{-1}(q))\le \ell\, \dist(q, \widetilde\Xi(p))=  \ell\, \dist(q,  \Xi(p)+L^\perp)\quad \mbox{for all}\;\; (p,q)\in V\times W
$$
is satisfied, where $L=\para \{\partial \vartheta(\Xi(\ou))\}$.  Since $\Xi$ is ${\cal C}^2$-smooth, we can assume by shrinking the neighborhood $U$ if necessary that 
$(p^k,q^k):=(u+t_k w, \Xi(u)+t_k \xi^k)\in V\times W$ for any $k$ sufficiently large. 
Thus, for any $k$ sufficiently large, we can find $u^k\in \widetilde\Xi^{-1}(q^k)$  such that 
\begin{equation*}\label{co4.2}
\|u+t_k w - u^k \|=\|p^k-u^k \|\leq \ell\, \dist\big(q^k,\widetilde\Xi(p^k)) \leq \ell\, \| \Xi(u)+t_k \xi^k- \Xi(u+t_kw)\|.
\end{equation*}
Let $w^k := (u^k - u)/t_k$ and observe that
\begin{equation*}
\|w^k - w\| = \big\|\frac{u^k - u- t_k w}{t_k}\big\|\leq \ell \, \big\|\frac{\Xi(u+t_kw) - \Xi(u)}{t_k} - \xi^k\big\| \to 0 \quad\textrm{ as }\; k \to \infty,
\end{equation*}
which yields  $w^k\to w$ as $k\to \infty$.
Since $u^k\in \widetilde\Xi^{-1}(q^k)$, we find by the definition of $\widetilde\Xi$ a vector $\nu^k \in L^\perp$ such that 
\begin{equation}\label{xik}
\Xi(u) +t_k\xi^k =q^k= \Xi(u^k) +\nu^k.
\end{equation} 
Recall that $\mu\in M(u,y)$ and hence $\mu\in \sub \vartheta(\Xi(u))\subset \sub \vt(\Xi(\ou))$. 
Since $\nu^k \in L^\perp = \para\{\partial \vartheta(\Xi(\ou))\}^\perp$,
we get $\la \nu^k ,\eta\ra=\la \nu^k ,\mu\ra$ for any $\eta\in \sub \vartheta(\Xi(\ou))$. This, coupled with Proposition~\ref{subl}(a), allows us to conclude for all $k$ sufficiently large that
\begin{align}
\vartheta(\Xi(u) +t_k\xi^k) = \vartheta(\Xi(u^k)+\nu^k)&=\sigma_{\partial \vartheta(\Xi(\ou))} \big(\Xi(u^k)+\nu^k \big) =  \la \mu, \nu^k\ra + \sigma_{\partial \vartheta(\Xi(\ou))} \big( \Xi(u^k) \big) \nonumber\\
&= \la \mu, \nu^k\ra+  \vartheta(\Xi(u^k)). \label{co4.4}  
\end{align}
We therefore obtain from  \eqref{co}, \eqref{co4.1}, and \eqref{xik} as well as $w^k\to w$ that
\begin{align}
&\d^2\vartheta(\Xi(u),  \mu)(\nabla \Xi(u)w) = \lim_{k\to\infty}\frac{\vartheta(\Xi(u^k))+\la\mu, \nu^k\ra-\vartheta(\Xi(u))-\la  \mu, \Xi(u^k) +\nu^k -\Xi(u)\ra}{\frac{1}{2}t_k^2}\nonumber\\
&= \lim_{k\to\infty}\Big(\frac{g (u+t_kw^k)- g(u)-t_k\la \nabla \Xi(u)^* \mu, w^k\ra}{\frac{1}{2}t_k^2}-\frac{\la  \mu, \Xi(u^k)-\Xi(u) - t_k\nabla \Xi(u)w^k\ra}{\frac{1}{2}t_k^2}\Big)\nonumber\\
&\geq \d^2g (u, y)(w) -\la  \mu, \nabla^2\Xi(u)(w, w)\ra,\nonumber
\end{align}
which   verifies the opposite inequality in \eqref{co4}   and hence completes the proof. 
\end{proof}

Note that the chain rule for the second subderivative of reliably $\C^2$-decomposable functions, established above, cannot be obtained from  previous efforts in this direction.
In fact, it was shown in \cite[Theorem~5.4]{ms20} that such a chain rule can be achieved for a broader class of functions, which includes $\C^2$-decomposable functions, under the metric subregularity constraint qualification, 
which is strictly weaker than the nondegeneracy condition, imposed in Theorem~\ref{prop:tedg}. However, \cite[Theorem~5.4]{ms20} gives us this chain rule just at the point under consideration 
and cannot ensure a similar result for all the points nearby. Indeed, had we  assumed  in Theorem~\ref{prop:tedg} the reliable $\C^2$-decomposability of $g$ for all $u$ close to $\ou$, we could have applied \cite[Theorem~5.4]{ms20}
to obtain \eqref{co3}. Even in such a case, we should have dealt with the fact that the mapping $\Xi$ in \eqref{co3} would depend on $u$, an unpleasant issue which causes a major obstacle in the proof of Theorem~\ref{c2cone}.
The nondegeneracy condition allows us to bypass this hurdle by exerting  metric regularity   into the mapping $\widetilde\Xi$ in Propsoition~\ref{prop:nondeg}(c). Note also that a similar chain rule  was recently obtained 
in \cite[Corollary~4.3]{bm} for a general composite function under the surjectivity assumption on the jacobian of the inner mapping. Imposing the latter surjectivity condition in Theorem~\ref{prop:tedg}, one can obtain 
\eqref{co3} without facing the above-mentioned  obstacle. We, however, require to proceed with the nondegeneracy condition, which is in general strictly weaker than the surjectivity condition.

To establish our characterization of strict proto-differentiability of subgradient mappings of $\C^2$-decomposable functions, we 
 begin to analyze some  variational properties  of the  sublinear function $\vartheta$ from \eqref{co}  at the origin.

\begin{Lemma}\label{claim2}
Let $\vartheta: \Z \to \oR$ be a proper, lsc, and sublinear function, $\oz=0$, and $\bar\mu \in \ri \partial \vartheta(\oz)$. Then there exists $\varepsilon>0$ such that for all $(z,  \mu) \in \big(\gph \partial \vartheta\big)\cap \B_\varepsilon(\oz, \bar\mu)$, we have 
$
 \mu \in \ri \partial \vartheta(\oz)
$
and $\mu \in \ri \partial \vartheta(z)$. 
\end{Lemma}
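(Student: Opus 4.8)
The plan is to observe that the statement is really about the single closed convex set $C:=\partial\vartheta(\oz)$ and to exploit two elementary facts: (i) $\ri C$ is relatively open in $\aff C$; and (ii) for every $z\in\dom\vartheta$ one has, by Proposition~\ref{subl}, both $\partial\vartheta(z)\subseteq C$ and $\partial\vartheta(z)=\argmax\{\la\eta,z\ra\mid\eta\in C\}$, so that $\partial\vartheta(z)$ is exactly the set of maximizers of the linear functional $\la\cdot,z\ra$ over $C$, the maximum value being $\vartheta(z)$.

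First I would fix $\varepsilon$: since $\bar\mu\in\ri C$ and $\ri C$ is relatively open in $\aff C$, there is $\varepsilon>0$ with $\B_\varepsilon(\bar\mu)\cap\aff C\subseteq\ri C$. Then I would verify this $\varepsilon$ works. Given $(z,\mu)\in(\gph\partial\vartheta)\cap\B_\varepsilon(\oz,\bar\mu)$, the set $\partial\vartheta(z)$ contains $\mu$ and hence is nonempty, so $\vartheta(z)$ is finite, $z\in\dom\vartheta$, and Proposition~\ref{subl}(b) yields $\mu\in\partial\vartheta(z)\subseteq C\subseteq\aff C$. Since also $\|\mu-\bar\mu\|\le\|(z,\mu)-(\oz,\bar\mu)\|\le\varepsilon$, it follows that $\mu\in\B_\varepsilon(\bar\mu)\cap\aff C\subseteq\ri C=\ri\partial\vartheta(\oz)$, which is the first assertion.

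For the second assertion $\mu\in\ri\partial\vartheta(z)$, I would show that in fact $\partial\vartheta(z)=C$; note this uses only $\mu\in\ri C$ and $\mu\in\partial\vartheta(z)$, and no smallness of $z$. Indeed, by Proposition~\ref{subl}(a) the functional $\la\cdot,z\ra$ attains its maximum over $C$ at the relative interior point $\mu$, and a linear functional attaining its maximum over a convex set at a relative interior point must be constant on that set; hence $\la\eta,z\ra=\vartheta(z)$ for all $\eta\in C$, so $\partial\vartheta(z)=\argmax\{\la\eta,z\ra\mid\eta\in C\}=C$ and therefore $\mu\in\ri C=\ri\partial\vartheta(z)$. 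Should I wish to avoid that lemma, I would instead use the prolongation property of relative interiors: for arbitrary $\eta\in C$ choose $\delta>0$ with $\mu_\delta:=(1+\delta)\mu-\delta\eta\in C$; then $\la\mu_\delta,z\ra\le\vartheta(z)=\la\mu,z\ra$ expands to $\la\eta,z\ra\ge\vartheta(z)$, forcing $\la\eta,z\ra=\vartheta(z)$, i.e.\ $\eta\in\partial\vartheta(z)$, whence $C\subseteq\partial\vartheta(z)$. The only point demanding any care is this identification $\partial\vartheta(z)=C$, a routine but not entirely trivial fact of finite-dimensional convex analysis; the choice of $\varepsilon$ and the first assertion are immediate once the relative openness of $\ri C$ is in hand.
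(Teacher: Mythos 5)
Your proof is correct, and for the harder half of the statement it takes a genuinely different route from the paper. The setup and the first assertion are essentially the same in both arguments: one fixes $\varepsilon$ from the relative openness of $\ri \partial\vartheta(\oz)$ in its affine hull and then uses $\partial\vartheta(z)\subset\partial\vartheta(\oz)$ (Proposition~\ref{subl}(b)) to place $\mu$ in $\ri\partial\vartheta(\oz)$. For the second assertion, the paper argues locally: it takes an arbitrary $\widehat q\in\big(\aff\partial\vartheta(z)\big)\cap\B_\varepsilon(\mu)$, writes it as an affine combination of points of $\partial\vartheta(z)$, and uses the argmax description in Proposition~\ref{subl}(a) to check $\la\widehat q,z\ra=\max_{q\in\partial\vartheta(\oz)}\la q,z\ra$ and $\widehat q\in\partial\vartheta(\oz)$, hence $\widehat q\in\partial\vartheta(z)$; this exhibits a relative neighborhood of $\mu$ inside $\partial\vartheta(z)$. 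You instead argue globally: since $\partial\vartheta(z)$ is the face of $C=\partial\vartheta(\oz)$ exposed by the linear functional $\la\cdot,z\ra$, and this face meets $\ri C$ at $\mu$, the prolongation principle forces the functional to be constant on $C$, so $\partial\vartheta(z)=C$ and the conclusion is immediate. Your version yields the strictly stronger output $\partial\vartheta(z)=\partial\vartheta(\oz)$ (and, as you note, needs no smallness of $z$ for that step, only $\mu\in\ri C\cap\partial\vartheta(z)$), at the cost of invoking the standard face/prolongation fact, which you correctly supply; the paper's version stays entirely elementary but proves only the relative-interior membership it needs. Both rest on the same two ingredients from Proposition~\ref{subl}, and I see no gap in yours.
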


\begin{proof} Since $\bar\mu \in \ri \partial \vartheta(\oz)$, we find  $\varepsilon>0$   such that $\big(\aff \partial \vartheta(\oz) \big)\cap \B_{2\varepsilon}(\bar\mu)\subset\partial \vartheta(\oz)$. 
Take any $(z,  \mu )\in \big(\gph \partial \vt\big) \cap \B_\varepsilon(\oz, \bar\mu)$. It is not hard to see that 
$$
  \big(\aff \partial \vartheta(\oz)\big)\cap \B_\varepsilon( \mu)\subset  \big(\aff \partial \vartheta(\oz) \big)\cap \B_{2\varepsilon}(\bar\mu)\subset \partial \vartheta(\oz),
  $$ 
  which in turn proves that $ \mu \in \ri \partial \vartheta(\oz)$. To prove the second claim, take  $\widehat q \in \big(\aff \partial \vartheta(z)\big)\cap \B_\varepsilon( \mu)$. 
We then deduce from $\mu\in \B_\ve(\omu)$ and Proposition~\ref{subl}(b) that   $\widehat q \in    \big(\aff \partial \vartheta(\oz)\big) \cap \B_{2\varepsilon}(\bar\mu)\subset\partial \vartheta(\oz)$.
Since  $\widehat q\in \aff \partial \vartheta(z)$, we find $m\in \N$, $\alpha_i \in \R$ and $q^i \in \partial \vartheta(z)$ for  $i=1,\ldots,m$   with $\sum_{i=1}^m \alpha_i = 1$ such that $\widehat q = \sum_{i=1}^m \alpha_i q^i$. 
We conclude from Proposition~\ref{subl}(a) and $q^i \in \partial \vartheta(z)$ 
 that $ \la q^i, z\ra=\beta$ for any $i=1,\ldots,m$, where $\beta:=\max_{q \in \partial \vartheta(\oz)}\,\la q, z\ra$. Thus, we obtain 
 \begin{equation*}
 \la \widehat q, z\ra =  \sum_{i=1}^m\alpha_i \la q^i, z\ra =\beta= \max_{q \in \partial \vartheta(\oz)}\,\la q, z\ra.
\end{equation*}
Employing again  Proposition~\ref{subl}(a) and using the fact that $\widehat q \in \partial \vartheta(\oz)$ imply that 
 $\widehat q \in \partial \vartheta(z)$. This proves the inclusion $\big(\aff \partial \vartheta(z)\big)\cap \B_\varepsilon( \mu) \subset \partial \vartheta(z)$, which yields $ \mu \in \ri \partial \vartheta(z)$ and  hence completes the proof.
\end{proof}

We continue with recording a characterization of strict proto-differentiability of subgradient mappings of prox-regular functions  via the concept of strict twice epi-differentiability.
Following \cite[page~1830]{pr},   we say that a function $f:\X\to \oR$ is {\em strictly} twice epi-differentiable at $\bar x$ for $\ov\in \sub f(\ox)$ if 
the functions $  \Delta_t^2 f(  x , v)$ epi-converge  to a function as $t\searrow 0$, $(x,v)\to (\ox,\ov)$ with $f(x)\to f(\ox)$ and $(x,v)\in \gph \sub f$.  If this condition holds, the limit function must be  the second subderivative $  \d^2 f(\bar x,\ov)$.

\begin{Proposition}\label{chste} Under the hypothesis of Corollary~{\rm\ref{sted}}, the following properties can be added to the list of equivalences:
\begin{itemize}[noitemsep,topsep=2pt]
\item [ \rm {(a)}] $  f$ is strictly twice epi-differentiable at $\ox$ for $\ov$;
\item [ \rm {(b)}] $\d^2 f(x,v)$ epi-converges to $\d^2  f(\ox,\ov)$  as $(x,v)\to (\ox,\ov)$  in the set of pairs  $(x,v)\in\gph \sub f$ for which $ f$ is   twice epi-differentiable;
\item [ \rm {(c)}] $\d^2 f(x,v)$ epi-converges to $\d^2  f(\ox,\ov)$  as $(x,v)\to (\ox,\ov)$  in the set of pairs  $(x,v)\in\gph \sub f$ for which $\d^2 f(x,v)$  is generalized quadratic. 
\end{itemize}
\end{Proposition}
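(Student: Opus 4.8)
The plan is to hang the whole argument on the identity, valid for $(x,v)\in\gph\sub f$ and $t>0$,
\begin{equation*}
\sub\big(\sm\Delta_t^2 f(x,v)\big)(w)=\frac{\sub f(x+tw)-v}{t},\qquad\mbox{so that}\qquad \gph\,\sub\big(\sm\Delta_t^2 f(x,v)\big)=\frac{\gph\sub f-(x,v)}{t},
\end{equation*}
which follows from the exact sum and chain rules for subgradients. In view of \eqref{tan1}, strict proto-differentiability of $\sub f$ at $\ox$ for $\ov$ — property (a) of Corollary~\ref{sted} — says precisely that the sets $t^{-1}(\gph\sub f-(x,v))$ converge as $(x,v,t)\to(\ox,\ov,0^+)$ with $(x,v)\in\gph\sub f$, i.e.\ that the mappings $\sub(\sm\Delta_t^2 f(x,v))$ graph-converge to $D(\sub f)(\ox,\ov)$. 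Moreover, by \cite[Theorem~13.40]{rw}, at every pair $(x,v)\in\gph\sub f$ near $(\ox,\ov)$ at which $\sub f$ is proto-differentiable (prox-regularity and subdifferential continuity being inherited, with the same constants, by such pairs) one has $D(\sub f)(x,v)=\sub(\sm\d^2 f(x,v))$, so that property (c) of Corollary~\ref{sted} reads: $\sub(\sm\d^2 f(x,v))$ graph-converges to $\sub(\sm\d^2 f(\ox,\ov))$ over these pairs.

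First I would pass from graph-convergence of these subgradient mappings to epi-convergence of the underlying functions by an Attouch-type theorem. After the quadratic shift dictated by the modulus $r$ in \eqref{prox}, the second-order difference quotients $\sm\Delta_t^2 f(x,v)+\sm\rho\|\cdot\|^2$ become locally convex (the content of prox-regularity; cf.\ \cite{pr2,rw}), as do $\sm\d^2 f(x,v)+\sm\rho\|\cdot\|^2$ by \cite[Proposition~13.49]{rw} and all the epi-limits involved; since each of these functions vanishes at $0$ with $0$ among its subgradients there, the normalization hypothesis of Attouch's theorem \cite[Theorem~12.35]{rw} holds automatically, and the $\rho\|\cdot\|^2$-term is absorbed by a shear of graph space. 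Hence graph-convergence of $\sub(\sm\Delta_t^2 f(x,v))$ as $(x,v,t)\to(\ox,\ov,0^+)$ is equivalent to epi-convergence of $\Delta_t^2 f(x,v)$ to $\d^2 f(\ox,\ov)$ in the same limit — which, with $f(x)\to f(\ox)$ supplied by subdifferential continuity, is exactly strict twice epi-differentiability (a); and, over the proto-differentiable pairs, graph-convergence of $D(\sub f)(x,v)$ is equivalent to epi-convergence of $\d^2 f(x,v)$, i.e.\ (b). Since Corollary~\ref{sted} already identifies its (a) and (c), this establishes the equivalence of (a) and (b) with strict proto-differentiability; the implication (a)$\Rightarrow$(b) also follows by a diagonalization argument in the metrizable epi-topology.

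For (c) I would first locate the test set $\{(x,v)\in\gph\sub f:\ \d^2 f(x,v)\ \mbox{is generalized quadratic}\}$ relative to the set in Corollary~\ref{sted}(d) — the pairs at which $\sub f$ is proto-differentiable and $T_{\gph\sub f}(x,v)$ is a linear subspace, equivalently at which $\gph\sub f$ is smooth in the sense of Definition~\ref{sss}(a). At such a pair $D(\sub f)(x,v)=\sub(\sm\d^2 f(x,v))$ is generalized linear, so \cite[Theorem~2.5]{nt} applied to the convex shift shows that $\d^2 f(x,v)$ is generalized quadratic; thus Corollary~\ref{sted}(d)'s set is contained in the test set of (c), and it is moreover dense near $(\ox,\ov)$ because $\gph\sub f$ is a Lipschitzian manifold there by \cite[Theorem~4.7]{pr} and, at the differentiability (Rademacher) points of the underlying Lipschitz chart, the graph is smooth. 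Consequently (c), restricted to this dense subset and fed through the Attouch bridge above, yields Corollary~\ref{sted}(d); conversely strict proto-differentiability gives $T_{\gph\sub f}(x,v)\to T_{\gph\sub f}(\ox,\ov)$ by Theorem~\ref{strict}, and this, together with the always-valid inclusion $\gph\,\sub(\sm\d^2 f(x,v))\subseteq T_{\gph\sub f}(x,v)$ (the lower epi-limit of $\sm\Delta_t^2 f(x,v)$ is attained along a subsequence, to which Attouch applies) and local uniform boundedness, forces $\d^2 f(x,v)\xrightarrow{e}\d^2 f(\ox,\ov)$ over the generalized-quadratic pairs — mirroring the (e)$\Rightarrow$(a) step of Theorem~\ref{strict}.

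The delicate point is exactly this handling of (c): its test set is only dense, not a full neighborhood, and need not lie inside the set of twice epi-differentiable pairs, so (c) cannot be reduced verbatim to the Attouch correspondence; one is forced to exploit the Lipschitzian-manifold structure of $\gph\sub f$ and Rademacher's theorem, as in Theorem~\ref{strict}. A secondary nuisance is that, $f$ being merely prox-regular rather than convex, the Attouch correspondence has to be installed after the quadratic shift that renders the second-order difference quotients locally convex (or, equivalently, via the Moreau-envelope machinery of \cite{pr2}).
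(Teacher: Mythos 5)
Your proposal is correct and takes essentially the same route as the paper: the paper's proof simply cites \cite[Corollary~4.3]{pr2}, whose argument is precisely the one you reconstruct --- the Attouch-theorem bridge (installed after the convexifying quadratic shift) between epi-convergence of the second-order difference quotients and graph convergence of their subgradient mappings $t^{-1}(\gph\sub f-(x,v))$, combined with the Lipschitzian-manifold/Rademacher mechanism already exploited in Theorem~\ref{strict} to handle the restricted test sets in (b) and (c). The only content the paper adds beyond that citation is that Corollary~\ref{sted} pins down the epi-limit as $\d^2 f(\ox,\ov)$, which your argument also delivers.
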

\begin{proof} The equivalence of the properties in (a)-(c) to those in Corollary~{\rm\ref{sted}} were established in \cite[Corollary~4.3]{pr2} using the Attouch' theorem (cf. \cite[Theorem~12.35]{rw}).
Note that \cite[Corollary~4.3]{pr2} does not determine to which function $\d^2 f(x,v)$ epi-converges  in (b) and (c).  The properties in Corollary~{\rm\ref{sted}} allow us to demonstrate  that 
the latter function is indeed the second subderivative $\d^2  f(\ox,\ov)$. One can use a similar argument as \cite[Corollary~4.3]{pr2} to justify the claimed equivalences.
\end{proof}

Below, we present our first major result in this section, a characterization of strict twice epi-differentiability of sublinear functions.
\begin{Theorem}\label{septheta}
Let $\vartheta: \Z \to \oR$ be a proper, lsc, and sublinear function. Then,  $\vartheta $ is strictly twice epi-differentiable at $\oz = 0$ for $\bar\mu$ if and only if $\bar\mu \in \ri \partial \vartheta(\oz)$. 
\end{Theorem}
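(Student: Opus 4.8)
The plan is to reduce the statement to a purely ``geometric'' property of $\gph\partial\vartheta$ and then to combine the sublinear structure with results already at hand. Since a proper lsc sublinear function is convex, $\vartheta$ is prox-regular, subdifferentially continuous and subdifferentially regular at $\oz=0$ for every $\bar\mu\in\partial\vartheta(\oz)$ (cf. the remarks after \eqref{prox} and \cite[Exercise~10.25(a)]{rw}). Hence Proposition~\ref{chste}, together with Corollary~\ref{sted}, tells us that $\vartheta$ is strictly twice epi-differentiable at $\oz$ for $\bar\mu$ if and only if $\partial\vartheta$ is strictly proto-differentiable at $\oz$ for $\bar\mu$, i.e. (Definition~\ref{sss}(b)) if and only if $\gph\partial\vartheta$ is strictly smooth at $(\oz,\bar\mu)$. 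So it suffices to show: $\gph\partial\vartheta$ is strictly smooth at $(0,\bar\mu)$ if and only if $\bar\mu\in\ri\partial\vartheta(\oz)$.

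For the implication ``$\Leftarrow$'' I would first record the consequences of sublinearity. By Proposition~\ref{subl}(a) (or Fenchel duality applied to the positively homogeneous $\vartheta$) one has $\vartheta=\sigma_C$ with $C:=\partial\vartheta(\oz)$ a nonempty closed convex set, so $\vartheta^*=\dd_C$ and therefore $\partial\vartheta=(N_C)^{-1}$; thus $(z,\mu)\in\gph\partial\vartheta$ if and only if $z\in N_C(\mu)$ (which in particular forces $\mu\in C$, as also follows from Proposition~\ref{subl}(b)). Now assume $\bar\mu\in\ri C$ and set $L:=\para C$, so $\aff C=\bar\mu+L$. Choose a neighborhood $V$ of $\bar\mu$ with $V\cap\aff C=V\cap C\subseteq\ri C$, and recall that $N_C(\mu)=L^\perp$ for every $\mu\in\ri C$. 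Combining these facts, in a sufficiently small neighborhood $O$ of $(0,\bar\mu)$ one gets $\gph\partial\vartheta\cap O = O\cap\bigl(L^\perp\times(\bar\mu+L)\bigr)$: indeed, if $(z,\mu)\in\gph\partial\vartheta\cap O$ then $\mu\in V\cap C\subseteq\ri C$, hence $z\in N_C(\mu)=L^\perp$ and $\mu\in\aff C$; conversely, if $(z,\mu)\in O$ with $z\in L^\perp$ and $\mu\in\bar\mu+L$, then $\mu\in V\cap\aff C\subseteq\ri C$, so $z\in L^\perp=N_C(\mu)$. Thus $\gph\partial\vartheta$ coincides near $(0,\bar\mu)$ with an open portion of the affine subspace through $(0,\bar\mu)$ with direction $L^\perp\times L$. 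Since the regular tangent cone and the paratingent cone to a set at a point depend only on the set near that point, and both coincide with the direction subspace for an affine subspace, we obtain $\widehat T_{\gph\partial\vartheta}(0,\bar\mu)=\widetilde T_{\gph\partial\vartheta}(0,\bar\mu)=L^\perp\times L$, which is exactly strict smoothness of $\gph\partial\vartheta$ at $(0,\bar\mu)$.

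For the implication ``$\Rightarrow$'' I would invoke Proposition~\ref{pedcon} with $f=\vartheta$, $\ox=0$, $\ov=\bar\mu$. Its hypotheses are satisfied: $\vartheta$ is prox-regular, subdifferentially continuous and subdifferentially regular at $0$ for $\bar\mu$ (convexity), and $\partial\vartheta$ is strictly proto-differentiable at $0$ for $\bar\mu$ by assumption and the equivalence above. The only point needing verification is the domain condition $\dom\d^2\vartheta(0,\bar\mu)=K_\vartheta(0,\bar\mu)$. The inclusion $\cl\bigl(\dom\d^2\vartheta(0,\bar\mu)\bigr)\subseteq K_\vartheta(0,\bar\mu)$ and the closedness of $K_\vartheta(0,\bar\mu)$ come from Proposition~\ref{proxcri}. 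For the reverse inclusion, note that by Proposition~\ref{subl}(c) $\d\vartheta(0)=\vartheta$, so $K_\vartheta(0,\bar\mu)=\{w\,|\,\vartheta(w)=\la\bar\mu,w\ra\}$, and for such $w$ positive homogeneity gives $\Delta_t^2\vartheta(0,\bar\mu)(w)=\bigl(t\vartheta(w)-t\la\bar\mu,w\ra\bigr)/(\tfrac12 t^2)=0$ for all $t>0$; hence $\d^2\vartheta(0,\bar\mu)(w)\le 0<\infty$ and $K_\vartheta(0,\bar\mu)\subseteq\dom\d^2\vartheta(0,\bar\mu)$, yielding equality. Proposition~\ref{pedcon} then delivers $\bar\mu\in\ri\partial\vartheta(\oz)$, which finishes the proof.

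The argument is largely routine once the right reductions are made; the step demanding the most care is the local identification of $\gph\partial\vartheta$ with a piece of an affine subspace in the ``$\Leftarrow$'' direction — in particular, shrinking $O$ so that every $\mu$ near $\bar\mu$ lying in $\aff C$ actually lies in $\ri C$, and correctly using $N_C(\mu)=(\para C)^\perp$ on $\ri C$ — along with making sure all the hypotheses of Propositions~\ref{chste} and~\ref{pedcon} are genuinely in force (prox-regularity, subdifferential continuity and regularity, and the domain condition).
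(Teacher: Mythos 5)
Your proof is correct, and while the forward implication follows the paper essentially verbatim (reduce to strict proto-differentiability via Proposition~\ref{chste}, check $\dom \d^2\vartheta(\oz,\bar\mu)=K_\vartheta(\oz,\bar\mu)$ --- you compute this directly from positive homogeneity where the paper cites the known formula $\d^2\vartheta(\oz,\bar\mu)=\delta_{K_\vartheta(\oz,\bar\mu)}$ --- and invoke Proposition~\ref{pedcon}), your reverse implication is genuinely different. The paper proves strict twice epi-differentiability head-on: it verifies the defining epi-convergence $\Delta^2_{t}\vartheta(z,\mu)\xrightarrow{e}\delta_{K_\vartheta(\oz,\bar\mu)}$ via the liminf/limsup criteria of \cite[Proposition~7.2]{rw}, with a case split on $w\in K_\vartheta(\oz,\bar\mu)$ versus $w\notin K_\vartheta(\oz,\bar\mu)$ that leans on Lemma~\ref{claim2} and the sublinearity estimates of Proposition~\ref{subl}. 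You instead exploit $\vartheta=\sigma_C$ with $C=\partial\vartheta(\oz)$ to write $\gph\partial\vartheta=\gph (N_C)^{-1}$ and observe that $\bar\mu\in\ri C$ forces this graph to coincide, near $(0,\bar\mu)$, with the affine subspace $L^\perp\times(\bar\mu+L)$, $L=\para C$ (using $N_C(\mu)=L^\perp$ on $\ri C$ and that points of $\aff C$ near $\bar\mu$ lie in $\ri C$); strict smoothness is then immediate, and you pass back to strict twice epi-differentiability through Proposition~\ref{chste}. Your route is shorter and makes geometrically transparent why the relative interior condition is exactly the right one (the graph is literally affine there), and it makes Lemma~\ref{claim2} dispensable for this particular theorem; the trade-off is that it relies on the Attouch-type equivalence packaged in Proposition~\ref{chste} in both directions, whereas the paper's hands-on computation produces the explicit epi-limit $\delta_{K_\vartheta(\oz,\bar\mu)}$ and serves as the template for the analogous limiting argument reused in the proof of Theorem~\ref{c2cone}.
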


\begin{proof} Observe first from \cite[page~138]{mi} that 
\begin{equation}\label{co1}
\d^2\vartheta(\oz, \bar\mu)(w) = \delta_{K_\vartheta(\oz,\, \bar\mu)}(w), \quad \textrm{ for }\; w\in \Z.
\end{equation}
If $\vartheta $ is strictly twice epi-differentiable at $\oz = 0$ for $\bar\mu \in \partial \vartheta(\oz)$, it follows from 
  Proposition~\ref{chste} that $\partial \vartheta$ is strictly proto-differentiable at $\oz$ for $\bar\mu$. Moreover, by \eqref{co1}, we have $\dom \d^2\vartheta(\oz, \bar\mu)=K_\vartheta(\oz,\, \bar\mu)$.
Employing Proposition~\ref{pedcon} yields  $\bar\mu \in \ri \partial \vartheta(\oz)$.

Assume now that  $\bar\mu \in \ri \partial \vartheta(\oz)$. To justify strict twice epi-differentiability of $\vartheta$ at $\oz$ for $\bar\mu$, 
we claim that  the functions $\Delta^2_t \vartheta  (z, \mu)$ epi-converge  to $ \delta_{K_\vartheta(\oz,\, \bar\mu)}$ as 
$t\searrow 0$ and  $(z,\mu)\to (\oz,\bar\mu)$   with $(z,\mu)\in \gph \sub \vartheta$.  
According to \cite[Proposition~7.2]{rw},  the latter epi-convergence claim holds  if and only if for any $w \in \Z$,  any sequence $t_k\searrow 0$, and any sequence $(\zk, \mu^k) \to (\oz, \bar\mu)$ with $\mu^k\in \partial \vartheta(\zk)$, the second-order difference quotients $\Delta^2_\tk \vartheta  (\zk, \mu^k)$ satisfy the  conditions
\begin{subequations}\label{co2}
\begin{align}
&\liminf\limits_{k\to\infty}\Delta^2_\tk \vartheta  (\zk, \mu^k)(w^k)\geq \delta_{K_\vartheta(\oz,\, \bar\mu)}(w)\quad\textrm{for every sequence }\; w^k\to w,\label{co2:a}\\
&\limsup\limits_{k\to\infty}\Delta^2_\tk \vartheta  (\zk, \mu^k)(w^k)\leq \delta_{K_\vartheta(\oz,\, \bar\mu)}(w)\quad\textrm{for some sequence }\; w^k\to w.\label{co2:b}
\end{align}
\end{subequations}
We split the verification of \eqref{co2:a} and \eqref{co2:b} into  the two possible cases: $w\in K_\vartheta(\oz, \bar\mu)$ and $w\notin K_\vartheta(\oz, \bar\mu)$. 
Assume first that $w \in K_\vartheta(\oz, \bar\mu)$ and choose arbitrary sequences  $t_k \searrow 0$, and $(\zk, \mu^k)\to (\oz, \bar\mu)$ with $\mu^k\in \partial \vartheta(\zk)$. 
We deduce from the convexity of $\vartheta $ and $\mu^k \in \partial \vartheta(\zk)$  that
\begin{equation*}
\Delta^2_\tk \vartheta  (\zk, \mu^k)(w^k) = \frac{\vartheta(\zk +t_kw^k)-\vartheta(\zk) - t_k\la \mu^k, w^k\ra}{\frac{1}{2}t_k^2}\geq 0,
\end{equation*}
which clearly justifies \eqref{co2:a} for any sequence $w^k \to w$, since $w \in K_\vartheta(\oz, \bar\mu)$. 
To prove \eqref{co2:b}, set $w^k = w$ for all $k\in \N$. 
Since  $\bar\mu\in \ri \partial \vartheta(\oz)$, the critical cone $K_\vartheta(\oz, \bar\mu) = N_{\partial \vartheta(\oz)}(\bar\mu)$ is a linear subspace, where the last equality comes from Proposition~\ref{proxcri}(b). 
By Proposition~\ref{subl}(b), the inclusion  $\partial \vartheta(\zk) \subset \partial \vartheta(\oz)$ holds for any $k$, which, coupled with $w\in N_{\partial \vartheta(\oz)}(\bar\mu)$, yields $\la\mu^k - \bar\mu, w\ra = 0$ for all $k \in \N$. 
This, together with  Proposition~\ref{subl}(c), leads us to $\la \mu^k, w\ra = \la \bar\mu, w\ra = \d \vartheta(\oz)(w) = \vartheta(w)$ for all $k$, and so we  arrive at
\begin{equation*}
\Delta^2_\tk \vartheta  (\zk, \mu^k)(w) = \frac{\vartheta(\zk +t_k w)-\vartheta(\zk) - t_k \vartheta(w)}{\frac{1}{2}t_k^2} \leq 0
\end{equation*}
due to the sublinearity of $\vartheta$. Passing to the limit clearly proves   \eqref{co2:b} in this case. 

Suppose now that $w \notin K_\vartheta(\oz, \bar\mu)$. 
Clearly,  \eqref{co2:b} is fulfilled for any sequence $w^k \to w$ with $w\notin K_\vartheta(\oz, \bar\mu)$. Pick  arbitrary sequences $w^k \to w$ and $(\zk, \mu^k) \to (\oz, \bar\mu)$ with $\mu^k \in \partial \vartheta(\zk)$. 
Recalling that $\omu \in \ri \sub \vt(\oz)$, we deduce from Lemma~\ref{claim2} that $\mu^k \in \ri \partial \vartheta(\oz)$ for all $k$ sufficiently large, and therefore arrive at $N_{\partial \vartheta(\oz)}(\mu^k) = N_{\partial \vartheta(\oz)}(\bar\mu) = K_\vartheta(\oz, \bar\mu)$ for such $k$. 
It follows from Proposition~\ref{subl}(a) that $\zk \in N_{\partial \vartheta(\oz)}(\mu^k)$. 
%The former inclusion along with $\bar\mu \in \ri \partial \vartheta(\oz)$ implies that  $N_{\partial \vartheta(\oz)}(\mu^k)$  and    $N_{\partial \vartheta(\oz)}(\bar\mu) = K_\vartheta(\oz, \bar\mu)$
%are linear subspaces of $\Z$ and  thus $N_{\partial \vartheta(\oz)}(\mu^k)=N_{\partial \vartheta(\oz)}(\bar\mu)$ for $k$ sufficiently large. 
Combining these tells us that $z^k\in K_\vartheta(\oz, \bar\mu)$, and also $-\zk \in K_\vartheta(\oz, \bar\mu)$ for $k$ sufficiently large, since
$K_\vartheta(\oz, \bar\mu)$ is a linear subspace of $\Z$. Thus, we conclude from Proposition~\ref{subl}(c) that 
$$
\d\vt(\oz)(\zk)=\vartheta(\zk) = \la\bar\mu, \zk\ra = -\la \bar\mu, -\zk\ra = -\d \vartheta(\oz)(-\zk)=- \vartheta(-\zk),
$$
which, together with the sublinearity of $\vartheta$, yields
\begin{equation*}
\Delta^2_\tk \vartheta  (\zk, \mu^k)(w^k) = \frac{\vartheta(\zk +t_kw^k)+\vartheta(-\zk)-t_k\la \mu^k, w^k\ra}{\frac{1}{2}t_k^2}\geq \frac{\vartheta(w^k) - \la \mu^k, w^k\ra}{\tfrac{1}{2}t_k}
\end{equation*}
for all $k$ sufficiently large. Observe that 
\begin{equation*}
\liminf_{k\to \infty} \vartheta(w^k) - \la \mu^k, w^k\ra \geq \vartheta(w) - \la \bar\mu, w\ra >0,
\end{equation*}
where the first inequality results from  lower semicontinuiuty of $\vartheta$ and the second one comes from  $w\notin K_\vartheta(\oz, \bar\mu)$.
Combining these estimates  and passing to the limit clearly justify \eqref{co2:a} for any $w \notin K_\vartheta(\oz, \bar\mu)$ and hence completes the proof. 
\end{proof}

We are now in a position to  establish our main result in the section, a simple characterization of strict proto-differentiability of subgradient mappings of 
$\C^2$-decomposable functions.

\begin{Theorem}[strict  proto-differentiability of $\C^2$-decomposable functions] \label{c2cone}
Assume that $g: \Y\to \oR$, $(\ou,\oy) \in \gph  \partial g$, and that there is a neighborhood $U$ of $\ou$ such that  $g$ is reliably $\C^2$-decomposable at any $u\in U$. 
Then the following properties are equivalent:
\begin{enumerate}[noitemsep,topsep=2pt]
\item $g $ is strictly twice epi-differentiable at $u$ for $y$ for any pair $(u,y)\in \gph \sub g$ close to $(\ou,\oy)$;  

\item $\partial g $ is strictly proto-differentiable at $u$ for $y$ for any pair $(u,y)\in \gph \sub g$ close to $(\ou,\oy)$;  

\item $\oy \in \ri \partial g (\ou)$.
\end{enumerate} 
\end{Theorem}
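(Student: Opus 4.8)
The plan is to establish the cycle of implications (a)$\Rightarrow$(b)$\Rightarrow$(c)$\Rightarrow$(a), exploiting the chain rules already developed and the sublinear characterization in Theorem~\ref{septheta}. For (a)$\Rightarrow$(b), recall that by Proposition~\ref{prop:nondeg}(a) the reliable $\C^2$-decomposability of $g$ on the neighborhood $U$ guarantees that $g$ is prox-regular and subdifferentially continuous at every $u\in U\cap\dom g$ for every $y\in\sub g(u)$. Hence Corollary~\ref{sted} together with Proposition~\ref{chste} applies at each such pair $(u,y)$ near $(\ou,\oy)$, and strict twice epi-differentiability of $g$ at $u$ for $y$ is equivalent there to strict proto-differentiability of $\sub g$ at $u$ for $y$. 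This immediately gives (a)$\Leftrightarrow$(b) pointwise, and in particular (a)$\Rightarrow$(b).

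For (b)$\Rightarrow$(c), I would apply Proposition~\ref{pedcon} at the base point $(\ou,\oy)$. By Proposition~\ref{prop:nondeg}(a), $g$ is prox-regular, subdifferentially continuous, and subdifferentially regular at $\ou$ for $\oy$; by hypothesis (b), $\sub g$ is strictly proto-differentiable at $\ou$ for $\oy$. The remaining ingredient needed for Proposition~\ref{pedcon} is the equality $\dom\d^2 g(\ou,\oy)=K_g(\ou,\oy)$. This follows from the chain rule in Theorem~\ref{prop:tedg}: writing $g(u')=g(\ou)+\vt(\Xi(u'))$ with $\Xi(\ou)=0$ and letting $\mu$ be the unique multiplier in $M(\ou,\oy)$, formula \eqref{co3} yields $w\in\dom\d^2 g(\ou,\oy)$ iff $\nabla\Xi(\ou)w\in\dom\d^2\vt(0,\mu)=K_\vt(0,\mu)$ (using \eqref{co1}), and one checks this description coincides with $K_g(\ou,\oy)$ via the chain rule for the subderivative $\d g(\ou)(w)=\vt(\nabla\Xi(\ou)w)$ under the BCQ \eqref{bcq} (here the surjectivity-free nondegeneracy setup of Proposition~\ref{prop:nondeg} is what makes the multiplier unique and the critical-cone description clean). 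With that equality in hand, Proposition~\ref{pedcon} delivers $\oy\in\ri\sub g(\ou)$, which is (c).

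For (c)$\Rightarrow$(a), the idea is to push the condition $\oy\in\ri\sub g(\ou)$ along the chain rule and invoke Theorem~\ref{septheta}. Fix a pair $(u,y)\in\gph\sub g$ close to $(\ou,\oy)$ and let $\mu$ be the unique element of $M(u,y)$. By the equivalence \eqref{rint} in Proposition~\ref{prop:nondeg}(b), $\oy\in\ri\sub g(\ou)$ forces $\bar\mu\in\ri\sub\vt(0)$ where $\bar\mu$ is the multiplier at $(\ou,\oy)$; moreover, because $\mu\to\bar\mu$ as $(u,y)\to(\ou,\oy)$ and $\mu\in\sub\vt(\Xi(u))\subset\sub\vt(0)$ (Proposition~\ref{subl}(b)), Lemma~\ref{claim2} yields $\mu\in\ri\sub\vt(0)$ for all nearby pairs, hence by Theorem~\ref{septheta} the sublinear function $\vt$ is strictly twice epi-differentiable at $0$ for $\mu$. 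It then remains to transfer this through the $\C^2$-smooth outer composition with $\Xi$ and the quadratic term $\langle\mu,\nabla^2\Xi(u)(\cdot,\cdot)\rangle$ in \eqref{co3}: I would show that strict twice epi-differentiability is stable under such a chain rule operation when the nondegeneracy/metric-regularity of $\widetilde\Xi$ (Proposition~\ref{prop:nondeg}(c)) holds uniformly near $\ou$, essentially re-running the epi-convergence argument of Theorem~\ref{prop:tedg} but now letting $(u,y)$ vary as well as $t\searrow 0$. This uniform chain-rule-for-strict-epi-differentiability step is the main obstacle: one must control the metric-regularity constant of $\widetilde\Xi$ and the $\C^2$-data of $\Xi$ on a fixed neighborhood (which Proposition~\ref{prop:nondeg} does provide) and carry out the diagonal epi-convergence estimate \eqref{co2:a}--\eqref{co2:b} with the extra moving parameters, using subdifferential continuity of $g$ to keep the relevant function values convergent. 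Once that is done, $g$ is strictly twice epi-differentiable at $u$ for $y$ for all nearby pairs, which is (a), closing the cycle.
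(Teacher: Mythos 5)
Your treatment of (a)$\Leftrightarrow$(b) (via Proposition~\ref{prop:nondeg}(a) and Proposition~\ref{chste}) and of (b)$\Rightarrow$(c) (computing $\dom\d^2 g(\ou,\oy)=K_g(\ou,\oy)$ from Theorem~\ref{prop:tedg}, \eqref{co1} and the subderivative chain rule under the BCQ, then invoking Proposition~\ref{pedcon}) coincides with the paper's argument and is fine. The problem is (c)$\Rightarrow$(a): the step you yourself flag as ``the main obstacle''---a chain rule transferring strict twice epi-differentiability from $\vartheta$ to $g$ by ``re-running the epi-convergence argument of Theorem~\ref{prop:tedg} with $(u,y)$ moving as well as $t\searrow 0$''---is exactly the content of the implication, and you do not carry it out. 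Producing the recovery sequences for the limsup estimate at moving base points, uniformly in $(u,y)$, is where all the work lies, so as written the proof of (c)$\Rightarrow$(a) is a plan rather than a proof. A second, smaller gap: you assert $\mu\to\bar\mu$ as $(u,y)\to(\ou,\oy)$ in order to apply Lemma~\ref{claim2}; this multiplier convergence is not automatic and the paper justifies it by citing \cite[Proposition~7.1]{HaS23}, which relies on the dual condition \eqref{duq} from Proposition~\ref{prop:nondeg}(b).

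For comparison, the paper avoids the diagonal argument entirely by exploiting the characterization in Proposition~\ref{chste}(b): strict twice epi-differentiability of $g$ at $\ou$ for $\oy$ is equivalent to epi-convergence of the second subderivatives $\d^2 g(u,y)\xrightarrow{e}\d^2 g(\ou,\oy)$ along pairs $(u,y)\in\gph\sub g$ at which $g$ is twice epi-differentiable. So one only needs: (i) to show that twice epi-differentiability of $g$ at such a nearby $(u,y)$ forces twice epi-differentiability of $\vartheta$ at $\Xi(u)$ for $\mu$ (this uses the fixed composite representation around $\ou$, the splitting $\Z=\rge\nabla\Xi(u)+\para\{\partial\vartheta(\Xi(\ou))\}^\perp$ coming from nondegeneracy, and the invariance of the difference quotients of $\vartheta$ along $L^\perp$); (ii) to apply Proposition~\ref{chste} to $\vartheta$ itself, using Theorem~\ref{septheta} and $\mu\to\bar\mu$, to get $\d^2\vartheta(\Xi(u),\mu)\xrightarrow{e}\d^2\vartheta(\Xi(\ou),\bar\mu)$; and (iii) to push this through \eqref{co3} by the epi-convergence calculus rules for composition with $\nabla\Xi(u)$ and addition of the quadratic term, the needed qualification being equivalent to \eqref{duq}. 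The statement at nearby pairs then follows from Lemma~\ref{claim2} and \eqref{rint}, which reduce to repeating the same argument with $(u,y)$ in place of $(\ou,\oy)$. You have all the right ingredients on the table, but the decisive reduction---replacing the moving-base-point difference-quotient estimates by epi-convergence of second subderivatives via Proposition~\ref{chste}(b) plus epi-convergence calculus---is missing, and without it or a worked-out version of your diagonal scheme the implication (c)$\Rightarrow$(a) is not established.
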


\begin{proof}
The equivalence of (a) and (b) results from Proposition~\ref{chste}.  We proceed by concluding from Theorem~\ref{prop:tedg} and \eqref{co1} that 
$$
\dom \d^2g (\ou, \oy)=\big\{w\in \Y\, \big|\, \nabla \Xi(\ou)w \in K_\vartheta(\Xi(\ou), \bar\mu)\big\},
$$
where $\bar\mu$ is the unique element of the multiplier set $M(\ou,\oy)$; see Proposition~\ref{prop:nondeg}(b). 
By Proposition~\ref{prop:nondeg}(a), the BCQ condition in \eqref{bcq} is satisfied at $u=\ou$. Thus, it follows from the chain rule for the subderivative in \cite[Theorem~10.6]{rw} that 
\begin{align*}
K_g(\ou,\oy) &= \big\{w\in \Y\, \big|\, \d g(\ou)(w) =  \d\vt(\Xi(\ou))(\nabla\Xi(\ou)w) = \la \oy, w\ra= \la \omu,  \nabla\Xi(\ou)w\ra\big\} \nonumber\\ 
&= \big\{w\in \Y\, \big|\, \nabla\Xi(\ou)w \in K_{\vartheta}(\Xi(\ou), \omu)\big\}. 
\end{align*}
Combining these tells us that $\dom \d^2g (\ou, \oy)=K_g(\ou,\oy)$. Moreover, $g$ is subdifferentially regular at $\ou$ due to Proposition~\ref{prop:nondeg}(a).
Thus,  Proposition~\ref{pedcon} confirms the implication (b)$\implies$(c).

We now turn to the implication (c)$\implies$(a). Assume first that $\oy \in \ri \partial g (\ou)$, $g$ has the representation in \eqref{co} with $u=\ou$, and the nondegeneracy condition \eqref{nondeg} holds at $u = \ou$. 
We are going to verify strict twice epi-differentiability of $g$ at $\ou$ for $\oy$ via justifying its characterization in  Proposition~\ref{chste}(b).
 %, which is equivalent to the latter condition. 
To this end, suppose that $(u, y)\to (\ou, \oy)$ with $y\in \partial g (u)$ and that $g $ is twice epi-differentiable at $u$ for $y$. 
The latter is equivalent via \cite[Proposition~7.2]{rw} to the fact  that for any $w\in \Y$ and any sequence of $t_k\searrow 0$,  there  exists a sequence $w^k \to w$ with
\begin{equation}\label{co6}
\lim_{k\to\infty} \frac{g (u+t_kw^k )-g(u) -t_k\la y, w^k \ra}{\frac{1}{2}t_k^2} = \d^2g (u, y)(w).
\end{equation}
Pick $(u, y)\in \gph \partial g$ sufficiently close to  $(\ou, \oy)$ such that $u+t_kw^k\in \O$  for any $k$ sufficiently large with $\O$ taken from \eqref{co}.  It   follows from \eqref{co} that 
$$
g (u+t_kw^k ) - g(u)=\vartheta(\Xi(u+t_kw^k )) - \vartheta(\Xi(u)).
$$ 
Set $\xi^k := (\Xi(u+t_kw^k )-\Xi(u))/t_k$. We  can rewrite the left-hand side of \eqref{co6} to obtain 
\begin{equation}\label{co6.1}
\d^2g (u, y)(w)=\lim_{k\to\infty} \Big (\Delta_{t_k}^2\vartheta(\Xi(u),  \mu)(\xi^k)+\frac{\la  \mu, \Xi(u+t_kw^k )-\Xi(u)-\nabla \Xi(u)w^k \ra}{\frac{1}{2}t_k^2}\Big),
\end{equation}
where $ \mu$ is the unique element in the multiplier set   $M(u, y)$; see Proposition~\ref{prop:nondeg}(b).
According to Theorem~\ref{prop:tedg},  \eqref{co3} holds for any $(u, y)\in \gph \partial g$ with $u$ sufficiently close to $\ou$.
This, coupled with \eqref{co6.1},  leads us to 
\begin{align}
\lim_{k\to\infty} \Delta_{t_k}^2\vartheta(\Xi(u),  \mu)(\xi^k) &=  \d^2g (u, y)(w)-\lim_{k\to\infty}  \frac{\la  \mu, \Xi(u+t_kw^k )-\Xi(u)-\nabla \Xi(u)w^k \ra}{\frac{1}{2}t_k^2}\nonumber\\
&= \d^2\vartheta(\Xi(u),  \mu)(\nabla \Xi(u)w).\label{co7}
\end{align}
Our goal is to show that $\vartheta$ is twice epi-differentiable at $\Xi(u)$ for $\mu$. In to order to achieve it, pick $\xi\in \Z$.
It is not hard to see that the nondegeneracy condition in \eqref{nondeg} for $u=\ou$ can be equivalently described as 
 $\Z = \rge \nabla \Xi(u) + \para\{\partial \vartheta(\Xi(\ou))\}^\perp$. Thus,  we can find $w\in \Y$ and $\nu \in \para\{\partial \vartheta(\Xi(\ou))\}^\perp$ such that $\xi= \nabla \Xi(u)w + \nu$. 
A similar argument as that for  \eqref{co4.4} brings us to 
$$
\vartheta(\Xi(u) +t_k(\xi^k+\nu)) = \vartheta(\Xi(u)+t_k\xi^k)+\la \mu, \nu\ra,
$$ 
which in turn yields 
\begin{equation}\label{co6.2}
\Delta_{t_k}^2\vartheta(\Xi(u),  \mu)(\xi^k) = \Delta_{t_k}^2\vartheta(\Xi(u),  \mu)(\xi^k+\nu).
\end{equation}
Observing that this equality is, indeed,  valid for any $\xi^k \in \Y$, we arrive at a similar equality for the second subderivative of $\vartheta$, namely 
$$
\d^2\vartheta(\Xi(u),  \mu)(\nabla \Xi(u)w) = \d^2\vartheta(\Xi(u),  \mu)(\nabla \Xi(u)w+\nu) = \d^2\vartheta(\Xi(u),  \mu)(\xi).
$$
Using this, we infer from \eqref{co7} and \eqref{co6.2} that 
\begin{equation*}\label{co7.1}
\lim_{k\to\infty} \Delta_{t_k}^2\vartheta(\Xi(u),  \mu)(\xi^k+\nu) =\d^2\vartheta(\Xi(u),  \mu)(\xi).
\end{equation*}
Since  $\xi^k +\nu \to \nabla \Xi(u)w + \nu = \xi$ and since $\xi\in \Z$ was taken arbitrary,  the above equality confirms that  $\vartheta$ is twice epi-differentiable at $\Xi(u)$ for $ \mu$. 
Recalling the condition $\oy \in \ri \partial g (\ou)$, we get from \eqref{rint} that $\bar\mu \in \ri \partial \vartheta(\Xi(\ou))$. 
Thus, Proposition~\ref{septheta} demonstrates that   $\vartheta $ is strictly twice epi-differentiable at $\Xi(\ou) = 0$ for $\bar\mu$. 
We know from Proposition~\ref{prop:nondeg}(b) that the dual condition in \eqref{duq} is satisfied. 
Appealing now to \cite[Proposition~7.1]{HaS23} tells us  that   $\mu \to \bar\mu$  as $(u,y)\to (\ou, \oy)$. 
So, we conclude from Proposition~\ref{chste}  that $\d^2\vartheta(\Xi(u),  \mu) \xrightarrow{e} \d^2\vartheta(\Xi(\ou), \bar\mu)$. It then follows from  \cite[Exercise~7.47(a)]{rw}   that $\d^2\vartheta(\Xi(u),  \mu)\circ \nabla \Xi(u)\xrightarrow{e} \d^2\vartheta(\Xi(\ou), \bar\mu)\circ\nabla \Xi(\ou)$ if  the condition 
\begin{equation*}\label{cq3}
0\in \inte \big(K_\vartheta(\Xi(\ou),\bar \mu)-\rge \nabla \Xi(\ou)\big)
\end{equation*}
is satisfied. We know from \cite[Proposition~2.97]{bs} that  this condition is equivalent to the dual condition in \eqref{duq} with $(u,\mu)=(\ou,\bar\mu)$, which clearly holds under the nondegeneracy condition in \eqref{nondeg}.
Finally, the sum rule for epi-convergence in \cite[Theorem~7.46(b)]{rw}  yields
\begin{equation*}
\la  \mu, \nabla^2 \Xi(u)(\cdot, \cdot)\ra +\d^2\vartheta(\Xi(u),  \mu)(\nabla\Xi(u) \cdot) \xrightarrow{e} \la \bar\mu, \nabla^2 \Xi(\ou)(\cdot, \cdot)\ra +\d^2\vartheta(\Xi(\ou), \bar\mu)(\nabla \Xi(\ou)\cdot),
\end{equation*}
which is equivalent by  \eqref{co3} to $\d^2g (u, y)$ epi-converging to $ \d^2g (\ou, \oy)$  as $(u,y)\to (\ou,\oy)$  in the set of pairs  $(u,y)\in\gph \sub g$ for which $g$ is   twice epi-differentiable. Appealing to Proposition~\ref{chste} confirms  strict twice epi-differentiability of $g $ at $\ou$ for $\oy$. 

We now proceed with verifying similar conclusion for any pair $(u,y)\in \gph \sub g$ in some neighborhood of $(\ou,\oy)$.
Observe from Lemma~\ref{claim2} that there exists $\varepsilon>0$ such that $\mu \in \ri \partial \vartheta(z)$ for all $(z,  \mu) \in \big(\gph \partial \vartheta\big)\cap \B_\varepsilon(\Xi(\ou), \bar\mu)$. 
Take $(u,y)\in \gph \sub g$ sufficiently close to $(\ou,\oy)$ so that $g$ is reliably $\C^2$-decomposable at $u$, and that $(\Xi(u), \mu) \in \big(\gph \partial \vartheta\big)\cap \B_\varepsilon(\Xi(\ou), \bar\mu)$, where $\mu$ is the unique multiplier in $M(u, y)$ -- the multiplier set defined in \eqref{laset} associated with the composite representation \eqref{co} of $g$ around $u$. 
The latter inclusion, which can be ensured via \cite[Proposition~7.1]{HaS23},   tells us that $\mu\in \ri \sub \vt(\Xi(u))$.
Employing the equivalence from \eqref{rint} again, shrinking the neighborhood around $\ou$ if necessary, we have $y \in \ri \sub g (u)$.
Repeating arguments carried out in the above proof for $(\ou, \oy)$ demonstrates that $g$  is strict twice epi-differentiable  at $u$ for $y$ and hence proves (a). 
This completes the proof.
\end{proof}

 \begin{Remark}\label{proto} Observe from the proof of Theorem~\ref{c2cone} that had we assumed the reliable $\C^2$-decomposability of $g$  only at $\ou$, we would have ensured strict twice epi-differentiability of $g$ in (a)
 and strict proto-differentiability of $\sub g$ in (b)  only at $\ou$ for $\oy$. We should also point out that the reliable $\C^2$-decomposability of $g$ in  a neighborhood of  $\ou$, used in   Theorem~\ref{c2cone}, 
 is not restrictive, since most functions,  satisfying the reliable $\C^2$-decomposability, enjoy this property at all points in their domains; see, e.g., Corollary~\ref{sdp}.
 \end{Remark}
 
 Theorem~\ref{c2cone} is a far-reaching extension of our recent results in \cite[Theorem~4.3]{HJS22} in which a similar characterization of strict twice epi-differentiability was achieved 
 for polyhedral functions. We should point out that   \cite[Theorem~3.9]{HaS22} presented a similar result for a composite function with the outer function being a polyhedral function and  the nondegeneracy condition being satisfied.
The latter result can be distilled from   Theorem~\ref{c2cone} using the observation in Example~\ref{c2rex}(b) for such  composite functions.
 
  \begin{Corollary}\label{sdp}
 Assume that   $(\overline X,\overline Y)\in \gph N_{\S_+^n}$.
 Then the following properties are equivalent:
 \begin{enumerate}[noitemsep,topsep=2pt]
\item $\dd_{\S_+^n}$ is strictly twice epi-differentiable at $X$ for $Y$ for any pair $(X,Y)\in \gph N_{\S_+^n}$ in a neighborhood of $(\overline X,\overline Y)$;  

\item $N_{\S_+^n}$ is strictly proto-differentiable at $X$ for $Y$ for any pair $(X,Y)\in \gph N_{\S_+^n}$ in a neighborhood of $(\overline X,\overline Y)$;  

\item $\overline Y \in \ri N_{\S_+^n} (\overline X)$;

\item  $\rank \overline X+\rank \overline Y=n$.
\end{enumerate} 
 \end{Corollary}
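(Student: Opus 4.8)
The plan is to deduce this corollary from Theorem~\ref{c2cone} applied to the indicator function $g = \dd_{\S_+^n}$. The first task is to verify that the hypotheses of Theorem~\ref{c2cone} are met: as noted in Example~\ref{c2rex}(a), the cone $\S_+^n$ is $\C^2$-cone reducible at every one of its points (cf. \cite[Example~3.140]{bs}), and hence $g = \dd_{\S_+^n}$ is reliably $\C^2$-decomposable at every $X \in \S_+^n$. In particular there is a neighborhood $U$ of $\overline X$ on which $g$ is reliably $\C^2$-decomposable. Theorem~\ref{c2cone} then immediately gives the equivalence of (a), (b), and the relative interior condition $\overline Y \in \ri \partial g(\overline X) = \ri N_{\S_+^n}(\overline X)$, which is exactly (c). So (a) $\Leftrightarrow$ (b) $\Leftrightarrow$ (c) is automatic, and the only genuine work is the equivalence (c) $\Leftrightarrow$ (d).

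For (c) $\Leftrightarrow$ (d), I would recall the standard description of $\gph N_{\S_+^n}$: a pair $(X, Y) \in \gph N_{\S_+^n}$ precisely when $X \succeq 0$, $Y \preceq 0$, and $XY = 0$; equivalently $X$ and $-Y$ are simultaneously diagonalizable with complementary supports on the eigenbases. Writing $X = P\diag(\lambda)P^\top$ with $P$ orthogonal, one has $Y = P\diag(\omega)P^\top$ where $\lambda_i \omega_i = 0$ for all $i$, $\lambda_i \ge 0$, $\omega_i \le 0$. Then $N_{\S_+^n}(\overline X)$ is the face of the polar cone $-\S_+^n$ determined by the kernel of $\overline X$; concretely, if $\overline X$ has rank $r$ and we split the eigenbasis into the range block (size $r$) and kernel block (size $n-r$), then $N_{\S_+^n}(\overline X) = \{\, -\overline P \left[\begin{smallmatrix} 0 & 0 \\ 0 & Z \end{smallmatrix}\right] \overline P^\top : Z \in \S_+^{n-r} \,\}$. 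Its relative interior consists of those elements with $Z \succ 0$ (relative interior taken within $\S_+^{n-r}$), i.e. those $Y$ with $\rank Y = n - r$. Hence $\overline Y \in \ri N_{\S_+^n}(\overline X)$ iff $\rank \overline Y = n - \rank \overline X$, which is (d). This is the classical "strict complementarity" characterization for the semidefinite cone, so the argument is routine once the facial structure is laid out.

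The only step requiring mild care is the explicit computation of $\ri N_{\S_+^n}(\overline X)$, i.e. confirming that the relative interior of the face $\{ -\overline P \diag(0,Z)\overline P^\top : Z \succeq 0\}$ is obtained by taking $Z \succ 0$. This follows because the map $Z \mapsto -\overline P \diag(0,Z) \overline P^\top$ is a linear isomorphism of $\S^{n-r}$ onto the affine (in fact linear) hull of $N_{\S_+^n}(\overline X)$, carrying $\S_+^{n-r}$ onto $N_{\S_+^n}(\overline X)$ and $\inte \S_+^{n-r} = \{Z \succ 0\}$ onto $\ri N_{\S_+^n}(\overline X)$; and $Z \succ 0$ is equivalent to $\rank Z = n - r$, which is equivalent to $\rank \overline Y = n - \rank \overline X$. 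I do not anticipate a real obstacle here — the main point is simply to cite Theorem~\ref{c2cone} correctly for (a)--(c) and to record the well-known rank identity for (c)$\Leftrightarrow$(d). No new estimates are needed.
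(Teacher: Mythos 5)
Your proposal is correct and follows essentially the same route as the paper: cite the ${\cal C}^2$-cone reducibility of $\S^n_+$ at every point (hence reliable ${\cal C}^2$-decomposability of its indicator on a neighborhood of $\overline X$) and invoke Theorem~\ref{c2cone} for the equivalence of (a)--(c). The only difference is that for (c)$\Leftrightarrow$(d) the paper simply cites the well-known strict complementarity characterization (Bonnans--Shapiro, p.~320), whereas you work out the facial description of $N_{\S^n_+}(\overline X)$ and its relative interior explicitly; your computation is correct, so this is just a self-contained rendering of the cited fact.
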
 
 \begin{proof}  It is known that the cone of positive semidefinite matrics $\S_+^n$ 
 is $\C^2$-cone reducible -- and thus its indicator function is reliably $\C^2$-decomposable -- at any of its points; see  \cite[Example~3.140]{bs}.
 Employing now Theorem~\ref{c2cone} justifies the equivalences of (a)-(c). The equivalence of (c) and (d) is well-known; see \cite[page~320]{bs}.
 \end{proof}

 Several important consequences can be distilled from  Theorem~\ref{c2cone}. We begin with a useful characterization of continuous differentiability of the proximal mapping 
 of $\C^2$-decomposable functions.

\begin{Theorem}\label{diffprox}
Assume that $g: \Y\to \oR$, $(\ou,\oy) \in \gph  \partial g$, and that there is a neighborhood $U$ of $\ou$ such that  $g$ is reliably $\C^2$-decomposable at any $u\in U$. 
If $g$ is prox-bounded, then  the following properties are equivalent:
\begin{enumerate}[noitemsep,topsep=2pt]
\item $\oy \in \ri \partial g (\ou)$;
\item [ \rm {(b)}]  there exists a positive constant   $r$ such that for any $\gamma\in (0,1/r)$, the proximal mapping $\prox_{\gamma g}$ is ${\cal C}^1$ around $\ou+\gamma\oy$;
\item [ \rm {(c)}]  there exists a positive constant   $r$ such that for any $\gamma\in (0,1/r)$, the envelope function   $e_{\gamma} g$ is ${\cal C}^2$ around $\ou+\gamma\oy$.
\end{enumerate}
In addition, if $g$ is convex, the constant $r$ in {\rm(}b{\rm)} and {\rm(}c{\rm)} can be taken as $0$ with convention $1/0=\infty$. 
\end{Theorem}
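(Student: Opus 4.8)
The plan is to obtain Theorem~\ref{diffprox} as a direct amalgam of Theorem~\ref{chpr} and Theorem~\ref{c2cone}, with the only preliminary work being to check that the hypotheses of the former are in force. First I would record that the standing assumption of the theorem---reliable $\C^2$-decomposability of $g$ at every point of a neighborhood $U$ of $\ou$---places us inside the framework of Proposition~\ref{prop:nondeg}(a). Since $(\ou,\oy)\in\gph\partial g$ forces $g(\ou)$ to be finite, i.e. $\ou\in\dom g$, that proposition yields that $g$ is prox-regular and subdifferentially continuous at $\ou$ for $\oy$ (in fact at every $u\in U\cap\dom g$ for every $y\in\partial g(u)$). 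Combined with the assumed prox-boundedness of $g$, this is exactly the set of hypotheses under which Theorem~\ref{chpr} applies with $f=g$, $\ox=\ou$, $\ov=\oy$.

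Next I would apply Theorem~\ref{chpr} to conclude that properties (b) and (c) of the present statement are each equivalent to the condition
\[
(\star)\qquad \partial g\ \text{is strictly proto-differentiable at}\ u\ \text{for}\ v\ \text{for all}\ (u,v)\in\gph\partial g\ \text{close to}\ (\ou,\oy).
\]
But $(\star)$ is verbatim property (b) of Theorem~\ref{c2cone}, whose hypotheses (existence of a neighborhood of $\ou$ on which $g$ is reliably $\C^2$-decomposable) coincide with ours. Hence by Theorem~\ref{c2cone} the condition $(\star)$ is equivalent to $\oy\in\ri\partial g(\ou)$, which is property (a). Chaining these equivalences delivers (a)$\iff$(b)$\iff$(c).

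For the convex addendum I would simply note that when $g$ is convex the final sentence of Theorem~\ref{chpr} permits the constant $r$ in its items (a) and (b) to be taken as $0$ under the convention $1/0=\infty$; since nothing else in the argument changes, the same holds for items (b) and (c) here. As for obstacles, I expect none of real substance: this is a clean corollary of the two main theorems of the paper. The one point demanding a moment of care is the bookkeeping of the ``for all nearby $(u,v)\in\gph\partial g$'' clauses, namely verifying that the ``close to $(\ou,\oy)$'' neighborhoods appearing in Theorem~\ref{chpr}(c) and in Theorem~\ref{c2cone}(b) refer to the same family of pairs; this is immediate because both conditions are purely local at $(\ou,\oy)$ and the neighborhood of reliable $\C^2$-decomposability can be intersected into both without loss of generality.
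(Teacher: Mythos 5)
Your proposal is correct and follows essentially the same route as the paper's own proof: invoke Proposition~\ref{prop:nondeg}(a) to secure prox-regularity and subdifferential continuity of $g$ at $\ou$ for $\oy$, then chain Theorem~\ref{c2cone} (equating $\oy\in\ri\partial g(\ou)$ with local strict proto-differentiability of $\partial g$) with Theorem~\ref{chpr} (equating that with the ${\cal C}^1$/${\cal C}^2$ properties of $\prox_{\gamma g}$ and $e_\gamma g$), with the convex refinement coming from the final clause of Theorem~\ref{chpr}. No gaps; your extra care about matching the ``nearby pairs'' neighborhoods is a harmless elaboration of what the paper leaves implicit.
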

\begin{proof} According to Proposition~\ref{prop:nondeg}(a), $g$ is both prox-regular and subdifferentially continuous at $\ou$ for $\oy$. Appealing to 
Theorem~\ref{c2cone} tells us that (a)  is equivalent to strict proto-differentiability of $\sub g$ at $u$ for $y$ for any $(u,y)\in \gph \sub g$ sufficiently close to $(\ou,\oy)$.
It follows   from  Theorem~\ref{chpr} that (a)-(c) are equivalent. 
\end{proof}

For the indicator function of a ${\cal C}^2$-cone reducible set, the above result is simplified as recorded below.
\begin{Corollary}\label{c2red}
Assume that $C\subset \Y$ is a nonempty closed convex subset, $\ou\in C$, and that $C$ is ${\cal C}^2$-cone reducible set at any $u$ in a neighborhood of $\ou$.
Then  the following properties are equivalent:
\begin{enumerate}[noitemsep,topsep=2pt]
\item $\oy \in \ri   N_C (\ou)$;
\item [ \rm {(b)}]   for any $\gamma>0$, the proximal mapping $\prox_{\gamma g}$ is ${\cal C}^1$ around $\ou+\gamma\oy$;
\item [ \rm {(c)}]    for any $\gamma >0$, the envelope function   $e_{\gamma} g$ is ${\cal C}^2$ around $\ou+\gamma\oy$.
\end{enumerate}
In particular, the properties in {\rm(}a{\rm)}-{\rm(}c{\rm)} are equivalent when $C=\S^n_+$.
\end{Corollary}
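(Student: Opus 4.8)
The plan is to obtain the corollary as a direct specialization of Theorem~\ref{diffprox} to the function $g=\dd_C$, the indicator of $C$ (this is the function meant in (b) and (c)), for which $\sub\dd_C=N_C$ and $\prox_{\gamma\dd_C}=P_C$ for every $\gamma>0$. First I would verify the three hypotheses of Theorem~\ref{diffprox}. Since $C$ is ${\cal C}^2$-cone reducible at every point $u$ in some neighborhood of $\ou$, Example~\ref{c2rex}(a) shows that $\dd_C$ is reliably ${\cal C}^2$-decomposable at each such $u$; hence there is a neighborhood $U$ of $\ou$ on which $\dd_C$ is reliably ${\cal C}^2$-decomposable, as required. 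Next, $\dd_C\ge 0$, so $\dd_C$ is bounded below and thus prox-bounded. Finally, $\dd_C$ is convex because $C$ is convex, which puts us in the convex case of Theorem~\ref{diffprox}, where the constant $r$ may be taken to be $0$ under the convention $1/0=\infty$; the range $\gamma\in(0,1/r)$ in that theorem therefore becomes ``for any $\gamma>0$''.

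With these checks done, Theorem~\ref{diffprox} applied at $(\ou,\oy)\in\gph N_C$ gives exactly the equivalence of $\oy\in\ri\sub\dd_C(\ou)=\ri N_C(\ou)$, of ${\cal C}^1$-smoothness of $\prox_{\gamma\dd_C}$ around $\ou+\gamma\oy$ for all $\gamma>0$, and of ${\cal C}^2$-smoothness of $e_\gamma\dd_C$ around $\ou+\gamma\oy$ for all $\gamma>0$---that is, the equivalence of (a), (b), (c). For the final assertion, I would invoke the classical fact, recalled in Example~\ref{c2rex}(a) via \cite[Example~3.140]{bs}, that $\S^n_+$ is ${\cal C}^2$-cone reducible at each of its points; thus the standing hypothesis of the corollary holds automatically for $C=\S^n_+$ at any $\overline X\in\S^n_+$, and the equivalence of (a)--(c) applies as is.

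I do not expect a genuine obstacle: the proof is bookkeeping once $g$ is identified with $\dd_C$. The only points that call for a modicum of care are the identity $\prox_{\gamma\dd_C}=P_C$ (so that the mapping in (b) is in fact $P_C$, evaluated at the $\gamma$-dependent point $\ou+\gamma\oy$ along the normal ray---connecting back to Holmes' theorem mentioned after Theorem~\ref{chpr}) and the convention $1/0=\infty$ that collapses the $\gamma$-range of Theorem~\ref{diffprox} to all positive $\gamma$ in the present convex setting.
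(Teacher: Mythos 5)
Your proposal is correct and follows essentially the paper's own route: the equivalence of (a)--(c) is obtained by specializing Theorem~\ref{diffprox} (whose hypotheses you verify via Example~\ref{c2rex}(a), the bound $\dd_C\ge 0$ for prox-boundedness, and convexity to take $r=0$) to $g=\dd_C$. For the $\S^n_+$ claim the paper cites Corollary~\ref{sdp} while you invoke the ${\cal C}^2$-cone reducibility of $\S^n_+$ from \cite[Example~3.140]{bs} directly, but these amount to the same verification.
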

\begin{proof}
The claimed equivalence of (a)-(c) results from Theorems~\ref{diffprox} and \ref{chpr}. The last claim about $\S^n_+$ falls out of Corollary~\ref{sdp}. 
\end{proof}

Note that Shapiro in \cite[Proposition~3.1]{sh16} characterized the differentiability of projection onto ${\cal C}^2$-cone reducible convex sets 
 using a similar relative interior condition in Corollary~\ref{c2red}(a). Shapiro's result was generalized in \cite[Lemma~5.3.32]{mi} for ${\cal C}^2$-decomposable convex functions.
Corollary~\ref{c2red} and  Theorem~\ref{diffprox}, respectively, improve both latter results by showing that indeed differentiability can be strengthened to continuous differentiability.
We should, however, point out that while the authors in \cite{sh16} and \cite{mi} assume ${\cal C}^2$-cone reducibility and  reliable ${\cal C}^2$-decomposability  at the point under consideration,
Corollary~\ref{c2red} and  Theorem~\ref{diffprox} requires that these properties   be satisfied in a neighborhood of such a point.  Also, it was shown in \cite[Theorem~28]{dhm}  that the proximal mapping of any prox-bounded and prox-regular ${\cal C}^2$-partly smooth function (cf. \cite[Definition~14]{dhm})  satisfying the relative interior condition in Theorem~\ref{diffprox}(c) enjoys local ${\cal C}^1$-smoothness similar to that in Theorem~\ref{diffprox}(b).  It was shown by Shapiro in \cite{sh03} that 
   reliably ${\cal C}^2$-decomposable functions are ${\cal C}^2$-partly smooth. This suggests that the implication (a)$\implies$(b)
   in  Theorem~\ref{diffprox} can be extracted from  \cite[Theorem~28]{dhm}. Note that it is possible to show that ${\cal C}^2$-partly smooth functions are 
   strictly twice epi-differentiable. This makes it possible to completely recover   \cite[Theorem~28]{dhm} using our approach and even shows that the relative interior condition 
   indeed characterizes the continuous differentiability of proximal mappings for this class of functions. We will leave this issue for our forthcoming paper \cite{HaS24}.   

Next, we provide a characterization of the existence of a continuously differentiable single-valued graphical localization of the solution mapping to 
the generalized equation 
\begin{equation}\label{deeq}
0\in \psi(u)+ \sub g(u),
\end{equation} 
where $\psi:\Y\to \Y$ is a continuously differentiable function and $g:\Y\to \oR$ is a proper function. When $g$ enjoys the reliable  
$\C^2$-decomposability, we can apply Theorem~\ref{c1fors} to obtain the result below. In what follows, we call a solution 
$\ou$ to the generalized equation in \eqref{deeq} {\em nondegenerate} if $-\psi(\ou)\in \ri \sub g(\ou)$.

\begin{Corollary} Assume that $\ou$ is a  solution to the generalized equation in \eqref{deeq}, that 
 there is a neighborhood $U$ of $\ou$ such that  $g$ in \eqref{deeq}  is reliably $\C^2$-decomposable at any $u\in U$. Then, the following properties are equivalent:
\begin{enumerate}[noitemsep,topsep=2pt]
\item the mapping $\psi+\sub g$  is metrically regular at $\ou$ for $0$ and  $\ou$ is a nondegenerate solution to \eqref{deeq};
\item the solution mapping $S$, defined by 
$$
S(y) :=  \big\{ u\in \Y\, \big|\, y\in \psi(u) +\partial g(u)\big\}, \quad y\in \Y,
$$
 has a Lipschitz continuous single-valued localization $s$ around $0\in \Y$ for $\ou$, which  is ${\cal C}^1$ around $0$. 
\end{enumerate} 
In particular, the properties in {\rm(}a{\rm)} and {\rm(}b{\rm)} are equivalent when $g=\dd_{\S^n_+}$.
\end{Corollary}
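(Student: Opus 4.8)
The plan is to reduce this Corollary to Theorem~\ref{c1fors} together with the characterization of strict proto-differentiability in Theorem~\ref{c2cone}. Observe first that since $g$ is reliably $\C^2$-decomposable at every $u\in U$, Proposition~\ref{prop:nondeg}(a) tells us that $g$ is prox-regular and subdifferentially continuous at $\ou$ for $\ov:=-\psi(\ou)$, so the generalized equation in \eqref{deeq} falls exactly into the framework of the generalized equation in \eqref{ge}, and the solution mapping $S$ here coincides with the one in \eqref{sol}. Thus Theorem~\ref{c1fors} applies verbatim: the solution mapping $S$ admits a Lipschitz continuous single-valued localization $s$ around $0$ for $\ou$ that is $\C^1$ around $0$ if and only if the mapping $\psi+\sub g$ is metrically regular at $\ou$ for $0$ \emph{and} $\sub g$ is strictly proto-differentiable at $u$ for $v$ for all $(u,v)\in\gph\sub g$ close to $(\ou,-\psi(\ou))$.

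The next step is to translate the latter strict proto-differentiability condition into the nondegeneracy condition $-\psi(\ou)\in\ri\sub g(\ou)$. This is precisely the content of Theorem~\ref{c2cone}: under the standing hypothesis that $g$ is reliably $\C^2$-decomposable at every $u$ in a neighborhood of $\ou$, strict proto-differentiability of $\sub g$ at $u$ for $y$ for all pairs $(u,y)\in\gph\sub g$ close to $(\ou,\oy)$ (with $\oy:=-\psi(\ou)\in\sub g(\ou)$, which holds since $\ou$ solves \eqref{deeq}) is equivalent to $\oy\in\ri\sub g(\ou)$, i.e.\ to $\ou$ being a nondegenerate solution. Substituting this equivalence into the statement of Theorem~\ref{c1fors} immediately yields the equivalence of (a) and (b). One minor point to check is that the phrase ``for all $(u,v)\in\gph\sub g$ close to $(\ou,-\psi(\ou))$'' in Theorem~\ref{c1fors} matches the phrase ``for any pair $(u,y)\in\gph\sub g$ close to $(\ou,\oy)$'' in Theorem~\ref{c2cone}(b); these describe the same set of pairs, so there is no gap.

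For the final assertion, we note that when $g=\dd_{\S^n_+}$, the cone $\S^n_+$ is $\C^2$-cone reducible at every one of its points by \cite[Example~3.140]{bs}, hence $\dd_{\S^n_+}$ is reliably $\C^2$-decomposable at every point of its domain (see Example~\ref{c2rex}(a)); in particular the standing neighborhood hypothesis is automatically satisfied for any $\ou\in\S^n_+$. Therefore the equivalence of (a) and (b) holds for $g=\dd_{\S^n_+}$ as a special case. (If desired one can further record, via Corollary~\ref{sdp}, that the nondegeneracy condition $-\psi(\ou)\in\ri N_{\S^n_+}(\ou)$ is equivalent to the rank condition $\rank\ou+\rank(-\psi(\ou))=n$, but this is not needed for the statement.)

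I do not expect a substantial obstacle here, since the Corollary is essentially a packaging of Theorem~\ref{c1fors} and Theorem~\ref{c2cone}. The only thing requiring a moment of care is confirming that the local neighborhood descriptions of the strict proto-differentiability hypothesis agree between the two theorems and that the sum rule plumbing used inside the proof of Theorem~\ref{c1fors} (via \cite[Proposition~5.3]{HJS22}) does not impose additional requirements beyond those already granted by prox-regularity and subdifferential continuity, which Proposition~\ref{prop:nondeg}(a) supplies. Once those are verified, the proof is a one-line appeal to the two cited results.
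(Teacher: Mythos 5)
Your proposal is correct and follows essentially the same route as the paper: the paper's own proof simply combines Theorem~\ref{c2cone} with Theorem~\ref{c1fors} for the equivalence of (a) and (b), and handles the $\S^n_+$ case via $\C^2$-cone reducibility (Corollary~\ref{sdp}). Your additional verification of the prox-regularity hypothesis via Proposition~\ref{prop:nondeg}(a) and of the matching neighborhood conditions is careful bookkeeping of what the paper leaves implicit.
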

\begin{proof}
The claimed equivalence results from Theorems~\ref{c2cone} and \ref{c1fors}. 
The last claim about $\S^n_+$ falls out of Corollary~\ref{sdp}.
\end{proof}

We close this section with an application of our main result in studying stability properties of the KKT system of the  composite optimization problem 
\begin{equation}\label{cp}
\mini \varphi(x) + (g\circ\Phi)(x)\quad \mbox{subject to}\quad x\in \X, 
\end{equation}
where $\varphi: \X \to \R$ and $\Phi: \X \to \Y$ are $\C^2$-smooth mappings and $g:\Y \to \oR$ is a convex function.
We are going to impose the reliable  ${\cal C}^2$-decomposability assumption on $g$ to make use of our developments of strict proto-differentiability of 
subgradient mappings in this section. With that in mind, the composite program in \eqref{cp} encompasses important classes of constrained and composite optimization problems including   second-order cone programming problems, when  $g = \delta_{\cal Q}$, where ${\cal Q}$ stands for  the second-order cone, and semidefinite programming  problems, when  $g = \delta_{\S^m_+}$.
Given $(x,  y)\in \X\times \Y$, define the Lagrangian of \eqref{cp} by $L(x,  y):= \varphi(x) +\la  y, \Phi(x)\ra$. 
The KKT system associated with the composite problem \eqref{cp} is given by
\begin{equation}\label{kkt}
0=\nabla_xL(x,  y), \quad  y \in \partial g(\Phi(x)).
\end{equation}
 Define the mapping $\Psi:\X\times\Y \tto\X\times \Y$ by
\begin{equation}\label{mr10}
\Psi(x,  y):= \begin{bmatrix}
\nabla_xL(x,  y)\\-\Phi(x)
\end{bmatrix} +\begin{bmatrix}
0\\ \partial g^*( y)
\end{bmatrix}.
\end{equation}
It can be easily seen that a pair   $(\ox, \oy)\in \X\times \Y$ is  a solution to the KKT system in  \eqref{kkt} if and only if $(0, 0)\in \Psi(\ox, \oy)$. 
Define also the  solution mapping $S:\X\times\Y \tto\X\times \Y$ to the canonical perturbed of the KKT system in \eqref{kkt} by
\begin{equation*}
S_{KKT}(p, q):= \Psi^{-1}(p, q)= \big\{(x,  y) \in \X\times \Y\, \big|\, (p,q) \in \Psi(x,  y)\big\},\quad\textrm{ for }\; (p,q)\in \X\times \Y.
\end{equation*}
The next result presents a characterization of   strong metric regularity of the   solution mapping $S_{KKT}$ under a relative interior condition imposed for the Lagrange multiplier under consideration.  

%%%%%%%%%%%%%%%%%%%%%%%%%
\begin{Theorem}\label{mrkkt}
Assume that  $(\ox, \oy)$ is a solution to the  KKT system in  \eqref{kkt} and   $\oy\in \ri \partial g(\ou)$ with $\ou :=\Phi(\ox)$,   that the convex function $g$  in \eqref{cp} is 
 reliably $\C^2$-decomposable in a neighborhood of  $\ou$.
Then the following properties are equivalent:
\begin{itemize}[noitemsep,topsep=2pt]
\item [ \rm {(a)}] the mapping $\Psi$ is strongly metrically regular at $(\ox, \oy)$ for $(0,0)$;
\item [ \rm {(b)}] the mapping $\Psi$ is  metrically regular at $(\ox, \oy)$ for $(0,0)$;
\item [ \rm {(c)}] the mapping $\Psi$ is strongly metrically subregular at $(\ox, \oy)$ for $(0,0)$;
\item [ \rm {(d)}] the solution mapping $S_{KKT}$ has a Lipschitz continuous single-valued localization around $(0, 0)$ for $(\ox, \oy)$, which is  ${\cal C}^1$ in a neighborhood of  $(0, 0)$;
 \item [ \rm {(e)}] the implication 
\begin{equation*}\label{mrchar}
 \begin{cases}
 \nabla^2_{xx}L(\ox,\oy)w-\nabla\Phi(\ox)^*w'=0,\\
 \nabla \Phi(\ox) w \in K_g(\ou,\oy),\\
 w'+ \nabla^2\la \bar\mu, \Xi\ra(\ou)(\nabla \Phi(\ox) w)\in K_g(\ou,\oy)^\perp
 \end{cases}
 \implies (w,w')=(0,0)
 \end{equation*}
 holds, where $\bar\mu\in \Z$ is the unique element  in the Lagrange multiplier set $M(\ou, \oy)$ from \eqref{laset}.
 \end{itemize}
\end{Theorem}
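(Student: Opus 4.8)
The plan is to apply the machinery built in Sections~\ref{sec04} and \ref{chain} to the mapping $\Psi$ from \eqref{mr10}, which we recognize as a generalized equation of the form \eqref{ge}. Writing $z = (x, y)$, set $\psi(z) := \big(\nabla_x L(x, y), -\Phi(x)\big)$ and $f(z) := \dd_0(x) + g^*(y)$ (so that $\sub f(z) = \{0\}\times \sub g^*(y)$ and $\psi + \sub f = \Psi$). The mapping $\psi$ is $\C^1$ around $(\ox,\oy)$ since $\varphi$ and $\Phi$ are $\C^2$, and its Jacobian $\nabla\psi(\ox,\oy) = \begin{bmatrix}\nabla^2_{xx}L(\ox,\oy) & \nabla\Phi(\ox)^*\\ -\nabla\Phi(\ox) & 0\end{bmatrix}$ is \emph{not} symmetric, so Corollary~\ref{thm:MR4} does not apply directly to $\Psi$; instead I will follow the route used there after a change of variables that restores symmetry, namely replacing the second block $-\Phi(x)$ and $\sub g^*(y)$ by their adjoint counterparts, or equivalently reformulating \eqref{kkt} via $u = \Phi(x)$ and working with $g$ rather than $g^*$. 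Concretely, I would rewrite the KKT system as the generalized equation $0\in \psi(x) + \sub g(\Phi(x))$-type inclusion on an augmented space on which the relevant derivative \emph{is} self-adjoint; this is the standard trick (cf.\ \cite[Theorem~4E.1]{DoR14} and the proof of Corollary~\ref{thm:MR4}).

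Next, the hypothesis $\oy\in\ri\sub g(\ou)$ together with $g$ being reliably $\C^2$-decomposable in a neighborhood of $\ou$ lets me invoke Theorem~\ref{c2cone}: $\sub g$ is strictly proto-differentiable at $u$ for $v$ for all $(u,v)\in\gph\sub g$ near $(\ou,\oy)$. Since $g$ is convex, $g^*$ is convex, and $\sub g^* = (\sub g)^{-1}$ is strictly proto-differentiable at $\oy$ for $\ou$ (as used in Theorem~\ref{msr5}); more precisely strict proto-differentiability propagates to the whole localization, so $\sub f$ is strictly proto-differentiable at $(\ox,\oy)$ for $-\psi(\ox,\oy)$ and at nearby graph points. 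Then Corollary~\ref{thm:MR1} gives the equivalence of (a) metric regularity $\Leftrightarrow$ (b) strong metric regularity of $\Psi$; Theorem~\ref{c1fors} gives the equivalence of these with (d), the existence of a $\C^1$ single-valued localization of $S_{KKT}$; and the self-adjointness achieved above, combined with the coderivative--graphical-derivative coincidence \eqref{gdcod} (valid by Theorem~\ref{thm:gdcod} under strict proto-differentiability), gives $DG = D^*G$ at the reference point, whence by \cite[Theorem~4E.1]{DoR14} and \cite[Theorem~3.3(ii)]{Mor18} metric regularity $\Leftrightarrow$ strong metric subregularity, i.e.\ (b)$\Leftrightarrow$(c).

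It remains to identify the point-based condition (e) with surjectivity of $DG$, i.e.\ with Proposition~\ref{thm:MR}(e) applied to $\Psi$. Using the sum rule for the graphical derivative \cite[Exercise~10.43(a)]{rw} and the chain rule for $D(\sub g)(\ou,\oy)$ that follows from Theorem~\ref{prop:tedg} and Proposition~\ref{glp}(b)--(so that $D(\sub g)(\ou,\oy)(\cdot) = A(\cdot) + N_\K(\cdot)$ with $\K = K_g(\ou,\oy)$ a linear subspace under the relative interior condition, and $A(w) = $ the linear part coming from $\nabla^2\la\bar\mu,\Xi\ra(\ou)$ composed with $\nabla\Phi(\ox)$)--one computes $D\Psi(\ox,\oy)$ block-wise. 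The condition $0\in D^*\Psi(\ox,\oy)(w,w')$ (or equivalently $0\in D\Psi$, by self-adjointness) unwinds exactly into the three displayed inclusions in (e): the first from the $\nabla_x L$ block, the second from $w$ lying in $\dom D(\sub g) = \K$ via the chain rule, and the third from the $N_\K$ term together with the Hessian-of-Lagrangian contribution $\nabla^2\la\bar\mu,\Xi\ra(\ou)$. So (e) is precisely ``$D^*\Psi(\ox,\oy)^{-1}(0) = \{0\}$'', which is metric regularity of $\Psi$ by \cite[Theorem~3.3(ii)]{Mor18}, closing the loop (b)$\Leftrightarrow$(e).

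The main obstacle I anticipate is bookkeeping the non-symmetry: making the change of variables that turns $\Psi$ into a generalized equation with a self-adjoint linearization, and checking that strict proto-differentiability of $\sub g$ (resp.\ $\sub g^*$) transfers cleanly through this reformulation and through the block structure, so that Corollary~\ref{thm:MR4}'s argument genuinely applies. A secondary technical point is assembling the chain-rule formula for $D(\sub g)(\ou,\oy)$ in the exact form $A(\cdot) + N_{K_g(\ou,\oy)}(\cdot)$ with $A$ identified via $\nabla^2\la\bar\mu,\Xi\ra(\ou)$, which requires combining Theorem~\ref{prop:tedg}, \eqref{co1}, and \cite[Theorem~13.40]{rw}; once that formula is in hand, the translation of the surjectivity/injectivity condition into the three inclusions of (e) is routine linear algebra.
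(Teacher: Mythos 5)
Your overall architecture is the same as the paper's: split $\Psi$ from \eqref{mr10} as $\psi+\partial f$ with $\psi(x,y)=(\nabla_xL(x,y),-\Phi(x))$ and $f$ depending only on $y$ through $g^*$, transfer strict proto-differentiability from $\partial g$ (Theorem~\ref{c2cone} under $\oy\in\ri\partial g(\ou)$) to $\partial g^*=(\partial g)^{-1}$ and then to $\partial f$ at all nearby graph points, invoke Theorem~\ref{c1fors}/Corollary~\ref{thm:MR1} for (a)$\Leftrightarrow$(b)$\Leftrightarrow$(d), and unwind a derivative criterion using $D(\partial g)(\ou,\oy)(\cdot)=\nabla^2\la\bar\mu,\Xi\ra(\ou)(\cdot)+N_{K_g(\ou,\oy)}(\cdot)$ to get (e); the paper cites \cite[Theorem~6.2(b)]{HaS23} for that formula while you rederive it from Theorem~\ref{prop:tedg} and \cite[Theorem~13.40]{rw}, which is fine. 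Two small repairs: your $f(z)=\dd_0(x)+g^*(y)$, read literally as the indicator of $\{0\}$ in $x$, gives $\partial f(x,y)=\X\times\partial g^*(y)$ and forces $x=0$; you need $f(x,y)=g^*(y)$, constant in $x$, exactly as your parenthetical $\{0\}\times\partial g^*(y)$ indicates. Also, ``strict proto-differentiability propagates to the localization'' needs the short product-space computation with regular tangent and paratingent cones of $\gph\partial f\cong\X\times\{0\}\times\gph\partial g^*$ that the paper writes out, but this is routine.

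The genuine gap is your treatment of (b)$\Leftrightarrow$(c), together with the aside ``or equivalently $0\in D\Psi$, by self-adjointness'' inside your (e) argument. You correctly observe that $\nabla\psi(\ox,\oy)$ is not self-adjoint (its off-diagonal blocks are $\nabla\Phi(\ox)^*$ and $-\nabla\Phi(\ox)$), but the promised ``change of variables on an augmented space on which the relevant derivative is self-adjoint'' is never exhibited, and the obvious candidates fail: flipping the sign of $y$ or of the second equation does symmetrize the smooth part, but it turns the set-valued block into $-\partial g^*$ (up to reflection), which is anti-monotone and not the subgradient mapping of a prox-regular function, so Corollary~\ref{thm:MR4} cannot be applied to the transformed inclusion; without that step neither (b)$\Leftrightarrow$(c) nor your $DG=D^*G$ shortcut is established. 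The step that actually closes it is not a symmetrization of $\nabla\psi$ but the generalized linearity of $D(\partial g^*)(\oy,\ou)$ (Proposition~\ref{glp}(a) under strict proto-differentiability): since its graph is a linear subspace, $D(\partial g^*)(\oy,\ou)(-w')=-D(\partial g^*)(\oy,\ou)(w')$, and computing $D\Psi$ and $D^*\Psi$ from \eqref{mr2.3}, \eqref{cosum}, \eqref{gdcod} shows that $0\in D\Psi((\ox,\oy),(0,0))(w,w')$ if and only if $0\in D^*\Psi((\ox,\oy),(0,0))(w,-w')$; hence the kernel criteria of \cite[Theorem~4E.1]{DoR14} (strong metric subregularity) and \cite[Theorem~3.3(ii)]{Mor18} (metric regularity) hold simultaneously, which is (b)$\Leftrightarrow$(c), and either kernel unwinds into the three conditions of (e). (For the record, the paper disposes of this step by asserting $\nabla\psi(\ox,\oy)=\nabla\psi(\ox,\oy)^*$ and citing Corollary~\ref{thm:MR4}; your skepticism about that symmetry is well founded, which is precisely why the sign-flip/generalized-linearity argument, not an unspecified ``standard trick,'' is what your proposal is missing.)
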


\begin{proof}
Setting 
\begin{equation*}\label{mr10.1}
\psi(x,  y): = \begin{bmatrix}
\nabla_xL(x,  y)\\
-\Phi(x)
\end{bmatrix} \quad\textrm{ and }\quad f(x,  y) := g^*( y),\quad (x, y)\in \X\times \Y,
\end{equation*}
and observing via \cite[Proposition~10.5]{rw} that 
\begin{equation}\label{mr10.2}
\partial f (x,  y) = \{0\}\times \partial g^{*}( y),
\end{equation} 
we can equivalently write the KKT system in \eqref{kkt} as the generalized equation 
$$
0\in \psi(x,  y)+\sub f(x,  y).
$$
Since $(\ox,\oy)$ is a solution  to the KKT system in \eqref{kkt}, it is a solution to the above generalized equation. 
It follows from Theorem~\ref{c2cone} and $\oy \in \ri \partial g(\ou)$ that there is $\ve>0$ such that  $\sub g$ is strictly proto-differentiable at $u$ for $y$ for any $(u,y)\in (\gph \sub g)\cap \B_\ve(\ou,\oy)$. 
Since $\sub g^*=(\sub g)^{-1}$ due to the convexity of $g$, we deduce that $\sub g^*$ is strictly proto-differentiable at $y$ for $u$ for any $(y,u)\in (\gph \sub g^*)\cap \B_\ve(\oy,\ou)$. 
By \eqref{mr10.2}, we have 
$$
(x,y,v,u)\in \gph \partial f \iff (x,v, y,u)\in  \X\times \{0\} \times \gph \partial g^*.
$$
Pick $(x,y,v,u)\in \gph \partial f$ so that $(y,u)\in (\gph \sub g^*)\cap \B_\ve(\oy,\ou)$.  Using the definition of the regular tangent cone, we obtain 
$$
(\xi_1,\xi_2,\eta_1,\eta_2)\in \rt_{\gph \sub f}(x,y,v,u)\iff (\xi_1,\eta_1,\xi_2,\eta_2)\in \X\times \{0\} \times \rt_{\gph \sub g^*}(y,u).
$$
Similarly, it is not hard to see via the definition of the paratingent cone that 
$$
(\xi_1,\xi_2,\eta_1,\eta_2)\in \widetilde T_{\gph \sub f}(x,y,w,u)\iff (\xi_1,\eta_1,\xi_2,\eta_2)\in \X\times \{0\} \times \widetilde T_{\gph \sub g^*}(y,u).
$$
Since $\sub g^*$ is strictly proto-differentiable at $y$ for $u$, we have $ \rt_{\gph \sub g^*}(y,u)=\widetilde T_{\gph \sub g^*}(y,u)$. Thus, we arrive at 
$\rt_{\gph \sub f}(x,y,v,u)=\widetilde T_{\gph \sub f}(x,y,v,u)$, which is equivalent to saying that $\sub f$ is strictly proto-differentiable at $(x,y)$ for $(v,u)$ whenever $(y,u)\in (\gph \sub g^*)\cap \B_\ve(\oy,\ou)$.
Observe also that $f$   is clearly prox-regular and subdifferentially continuous at $(\ox, \oy)$ for $(0, \ou)$, since $g$ is convex. 
Appealing now to Theorem~\ref{c1fors} demonstrates that (a), (b), and (d) are equivalent. On the other hand, it is easy to see that $\nabla \psi(\ox,\oy)=\nabla \psi(\ox,\oy)^*$. Thus, it results from 
Corollary~\ref{thm:MR4} that (b) and (c) are also equivalent. 

Turning to (e), recall that $\sub f$ is strictly proto-differentiable at $(\ox, \oy)$ for $(0, \ou)$. By Theorem~\ref{thm:gdcod} and  \eqref{mr10.2}, we conclude for any $(w, w')\in \X\times \Y$ that  
\begin{eqnarray*}
D^*(\partial f)\big((\ox, \oy), (0, \ou)\big)(w, w')=D(\partial f)\big((\ox, \oy), (0, \ou)\big)(w, w') = \{0\}\times D(\partial g^*)(  \oy,\ou)(w').
\end{eqnarray*}
Using this, the sum rule for coderivatives from \cite[Exercise~10.43(b)]{rw}, we get for any $(w, w')\in \X\times \Y$ that
\begin{align*}
D^*(\psi +\partial f)\big((\ox,&\, \oy), (0, 0)\big)(w, w')\\ 
&= \nabla \psi(\ox, \oy)^*(w, w') + D^*(\partial f)\big((\ox, \oy), (0, \Phi(\ox))\big)(w, w')\\
&= \nabla \psi(\ox, \oy)^*(w, w') + D(\partial f)\big((\ox, \oy), (0, \Phi(\ox))\big)(w, w')\\
&= \big(\nabla^2_{xx}L(\ox, \oy)w - \nabla\Phi(\ox)^*w', \nabla \Phi(\ox) w\big)+\{0\}\times D(\partial g^*)(  \oy,\ou)(w')\\
&= \big(\nabla^2_{xx}L(\ox, \oy)w - \nabla\Phi(\ox)^*w', \nabla \Phi(\ox) w\big)+\{0\}\times D(\partial g)(\ou, \oy)^{-1}(w'),
\end{align*}
where the last equality comes from $\sub g^*=(\sub g)^{-1}$. By  \cite[Theorem~3.3(ii)]{Mor18},  the mapping $\Psi=\psi +\partial f$  is metrically regular at $(\ox, \oy)$ for $(0, 0)$ if and only if 
the implication 
$$
(0,0)\in D^*\Psi \big((\ox, \oy), (0, 0)\big)(w, w')=D^*(\psi +\partial f)\big((\ox, \oy), (0, 0)\big)(w, w')\implies w=0,\; w'=0
$$
is satisfied. 
To show that this implication is the same as the one in (e), assume $(0,0)\in D^*\Psi \big((\ox, \oy), (0, 0)\big)(w, w')$. By the calculation above, we   get 
$\nabla^2_{xx}L(\ox, \oy)w - \nabla\Phi(\ox)^*w'=0$ and $- \nabla \Phi(\ox) w\in D(\partial g)(\ou, \oy)^{-1}(w')$. The latter amounts to the inclusion $w'\in D(\partial g)(\ou, \oy)(- \nabla \Phi(\ox) w)$.
Employing \cite[Theorem~6.2(b)]{HaS23} tells us that 
$$
 D(\partial g)(\ou, \oy)(- \nabla \Phi(\ox) w)= -\nabla^2\la \bar\mu, \Xi\ra(\ou)(\nabla \Phi(\ox) w)+ N_{K_g(\ou,\oy)}(-\nabla \Phi(\ox) w).
$$
Since $K_g(\ou,\oy)$ is a linear subspace of $\Y$ due to $\oy \in \ri \partial g(\ou)$, we have 
$$
N_{K_g(\ou,\oy)}(-\nabla \Phi(\ox) w)=\begin{cases}
K_g(\ou,\oy)^\perp& \mbox{if}\; \; \nabla \Phi(\ox) w\in K_g(\ou,\oy),\\
\emptyset& \mbox{otherwise}.
\end{cases} 
$$
Combining these shows that (e) is equivalent to (b) and hence completes the proof.
 \end{proof}

For   classical nonlinear programming problems (NLPs), it is well-known that metric regularity and strong metric regularity of KKT systems are equivalent; see \cite[Theorem~4I.2]{DoR14} and \cite[Section~7.5]{kk}.
By using a new approach, Theorem~\ref{mrkkt} extends this result for the composite problem \eqref{cp} under a relative interior condition. 
This extra condition allows us to demonstrate further that the Lipschitz continuous single-valued localization of the solution mapping to the KKT system of \eqref{cp}
is continuously differentiable. This can be viewed as an extension of Fiacco and McCormick's result in \cite{fm} for NLPs, which was achieved under the 
classical second-order sufficient condition, strict complementarity condition, and linear independence constraint qualification.

\section*{Declarations}

{\bf   Conflicts of interests.}  The authors have no competing interests to declare that are relevant to the content of
this article.

\small

\end{document}